\theoremstyle{definition}
\newtheorem{theorem}{Theorem}[section]
\newtheorem{proposition}{Proposition}
\newtheorem{lemma}{Lemma}
\newtheorem{remark}{Remark}
\newtheorem{Ex}{Example}
\title{The word problem for some classes of Adian inverse semigroups-II}
\author{Muhammad Inam}
\email{m-inam@onu.edu}
\address{School of Science, Technology \& Mathematics,\\
Ohio Northern University\\
Ada, Ohio 45810 USA}
\begin{document}
\date{\today}

\begin{abstract} We introduce the notion of a subgraph generated by an $R$-word $r$ of the Sch\"{u}tzenberger graph of a positive word $w$, $S\Gamma(w)$, where $w$ contains $r$ as its subword. We show that the word problem for a finitely presented Adian inverse semigroup $Inv\langle X|R \rangle$ is decidable if the subgraphs of $S\Gamma(t)$, for all $t\in X^+$, generated by all the $R$-words over the presentation $\langle X|R\rangle$, are finite. As a consequence of this result, we show that the word problem is decidable for some classes of one relation Adian inverse semigroups.

\end{abstract}

\maketitle

\section{Introduction}

\bigskip

Throughout this paper $X$ denotes an alphabet. The set $R=\{ (u_i,v_i)|i\in I\}$, where $u_i,v_i\in X^+$, denotes the set of \textit{positive relations}. The words $u_i$ and $v_i$ are called \textit{$R$-words}, provided that $(u_i,v_i)\in R$.  The pair $\langle X|R\rangle$ is called a \textit{positive presentation}. The  semigroup generated by the set $X$ and having a set $R$ of relations is denoted by $Sg\langle X|R\rangle$, and the group generated by the set $X$ and having a set $R$ of relations is denoted by $Gp\langle X|R\rangle$. There exists a natural homomorphism $\phi$:$Sg\langle X|R\rangle$ $\to$ $Gp\langle X|R\rangle$. 

We can construct two undirected graphs corresponding to a positive presentation. The \textit{left graph} of the presentation $\langle X|R\rangle$ is  denoted by $LG\langle X|R\rangle$. The vertices of $LG\langle X|R\rangle$ are labeled by the elements of $X$, and there is an edge corresponding to every relation $(u_i,v_i)\in R$, that connects the first (prefix) letters of $u_i$ and $v_i$ together. Similarly, the \textit{right graph} of the presentation $\langle X|R\rangle$ is denoted by $RG\langle X|R\rangle$, and it can be obtained by connecting the last (suffix) letters of $u_i$ and $v_i$ together, for every $(u_i,v_i)\in R$.  A closed path in  $LG\langle X|R\rangle$ is called a \textit{left cycle} and a closed path in $RG\langle X|R\rangle$ is called a \textit{right cycle}. Further details about the left and right graphs of a positive presentation $\langle X|R\rangle$ along with some examples can be found in \cite{Thesis}. If for some presentation $\langle X|R\rangle$, there is no closed path (cycle) in $LG\langle X|R\rangle$ and $RG\langle X|R\rangle$, then the presentation is called a \textit{cycle free} presentation. A cycle free presentation is also called an Adian presentation  because these presentations were first studied by S. I. Adian \cite{Adian}. A semigroup (group) presented by an Adian presentation is called an \textit{Adian semigroup (Adian group)}.  

A semigroup $S$ is called an \textit{inverse semigroup} if for every element $a\in S$ there exists a unique element $b\in S$ such that $aba=a$ and $bab=b$.  The unique element $b$ is denoted by $a^{-1}$. The idempotents commute in an inverse semigroup, and the product of two idempotents is an idempotent. The \textit{natural partial order} on an inverse semigroup $S$ is defined as $a\leq b$ if and only if $aa^{-1}b=a$, for some $a,b\in S$.  A congruence relation $\sigma$ is defined on $S$, for $a,b\in S$, by $a\sigma b$ if and only if there exists an element $c\in S$ such that $c\leq a,b$. It turns out that $\sigma$ is the minimum group congruence relation on $S$, i.e., $S/\sigma$ is the maximum group homomorphic image of $S$. Just like groups and semigroups, an inverse semigroups can also be presented by a set of generators and a set of relations. We denote an inverse semigroup by $Inv\langle X|R\rangle$, where $X$ is the set of generators, and $R$ is the set of relations. An inverse semigroup presented by an Adian presentation is called an \textit{Adian inverse semigroup}. If $S=Inv\langle X|R\rangle$, then $S/\sigma$ is the group $Gp\langle X|R\rangle$. The set of idempotents of $S$ is denoted by $E(S)=\{e\in S: e^2=e\}$.  An inverse semigroup is called \textit{E-unitary} if the inverse image of $1$(the identity element of the group $S/\sigma$) is precisely $E(S)$. All these facts about the inverse semigroups along with more details can be found in the text \cite{Lawson}.  

 In \cite{RG} Gray proved that the word problem for one relator $E$-unitary inverse semigroups is undecidable in general. 

Adian conjectured \cite{Adian} that the word problem is decidable for Adian semigroups. The following result was first proved by Adian \cite{Adian} for only finite Adian presentations. Later Remmers \cite{Remmers} proved the same result by using geometric techniques for any Adian presentation. 

\begin{theorem} An Adain semigroup embeds in the Adian group with the same presentation. 

\end{theorem}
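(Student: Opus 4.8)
The plan is to prove that the natural homomorphism $\phi\colon Sg\langle X|R\rangle \to Gp\langle X|R\rangle$ is injective, which is exactly the assertion that the semigroup embeds in the group. Because $\phi$ restricts to the identity on $X$, this reduces to a single implication about positive words: whenever $u,v\in X^+$ satisfy $\phi(u)=\phi(v)$ in $Gp\langle X|R\rangle$, they already represent the same element of $Sg\langle X|R\rangle$. My intended tool is the theory of planar diagrams, in the style of Remmers. Recall that a semigroup derivation transforming $u$ into $v$ by successive applications of the relations in $R$ is encoded by a \emph{directed} diagram: a finite planar diagram in a disk whose boundary is the union of a top path labelled $u$ and a bottom path labelled $v$, both directed from a common initial vertex to a common terminal vertex, and each of whose bounded faces is bounded by two positively directed paths labelled $u_i$ and $v_i$ for some $(u_i,v_i)\in R$. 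Establishing, and then exploiting, the equivalence between such directed diagrams and equalities in $Sg\langle X|R\rangle$ is the first step.

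The second step is to start from the group side. If $\phi(u)=\phi(v)$, then $uv^{-1}=1$ in $Gp\langle X|R\rangle$, so by van Kampen's lemma there is a reduced van Kampen diagram $\Delta$ over the group presentation whose boundary word is $uv^{-1}$ and whose faces carry the relators $u_iv_i^{-1}$. Unlike a semigroup diagram, $\Delta$ carries no a priori consistent orientation: its edges may be traversed either way. The goal is to show that, under the Adian hypothesis, $\Delta$ can in fact be oriented consistently, that is, that it is already a directed diagram in disguise; converting it then produces a semigroup derivation from $u$ to $v$ and finishes the proof. I would first pass to a reduced $\Delta$ with no cancelling pair of adjacent faces, and orient each edge by its positive letter.

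The heart of the argument, and the place where cycle-freeness enters, is to rule out the only obstructions to consistent orientation, namely interior vertices that are \emph{sources} (all incident edges directed outward) or \emph{sinks} (all incident edges directed inward), together with the analogous anomalies on the boundary. Here is the mechanism I expect to use: around a putative interior source $x$, read off the faces meeting $x$ in cyclic order; each such face has $x$ as the common start of its two defining paths, so it contributes the two initial letters of $u_i$ and $v_i$, which by definition are joined by an edge of the left graph $LG\langle X|R\rangle$. Threading these edges through the successive faces around $x$ yields a closed path in $LG\langle X|R\rangle$, i.e.\ a left cycle, contradicting the hypothesis that the presentation is cycle free. A symmetric computation at a sink, using the terminal letters, produces a right cycle in $RG\langle X|R\rangle$. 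With sources and sinks excluded, a standard flow argument shows the orientation is globally consistent, so $\Delta$ is directed and $u=v$ holds in $Sg\langle X|R\rangle$.

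I expect the main obstacle to be precisely this passage from ``no interior source or sink'' to ``globally consistent orientation,'' carried out with full attention to the boundary. One must treat the behaviour at the two distinguished boundary vertices where $u$ meets $v^{-1}$, handle degenerate faces and spurs left after reduction, and induct cleanly (on the number of faces or the area of $\Delta$) so that removing an outermost face preserves both the Adian hypothesis and the reduced, positively labelled structure. The combinatorial bookkeeping that turns the local prefix and suffix incidences into genuine closed paths in $LG\langle X|R\rangle$ and $RG\langle X|R\rangle$ is the delicate part; everything else is a matter of organising the induction.
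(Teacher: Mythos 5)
The paper does not actually prove this theorem: it records it as a known result, first proved combinatorially by Adian for finite presentations and then geometrically by Remmers for arbitrary Adian presentations, and your blind outline is precisely Remmers' geometric argument (pass from a reduced van Kampen diagram with boundary label $uv^{-1}$ to a semigroup diagram by using cycle-freeness of $LG\langle X|R\rangle$ and $RG\langle X|R\rangle$ to exclude interior sources and sinks). The outline is correct; the one point worth relocating is that reducedness is needed not only in the boundary/induction bookkeeping you flag at the end, but already in the source--sink step itself, since the closed walk you read off around an interior source lives in a forest and could a priori be a backtracking walk rather than a genuine cycle --- it is exactly a cancelling pair of adjacent faces that would produce such a backtrack, so eliminating those first is what makes the left- and right-cycle contradiction go through.
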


Magnus \cite{Lyndon} proved that the word problem is decidable for one relator groups. Since Adian semigroups embed in Adian groups, it follows that the word problem is decidable for one relation Adian semigroups as well. However, the question whether the word problem is decidable for one relation Adain inverse semigroups or not, has not been answered yet. 

The following theorem is proved in \cite{E-unitary}
\begin{theorem}\label{E-unitary} Adian inverse semigroups are $E$-unitary.
\end{theorem}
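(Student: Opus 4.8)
The plan is to use Stephen's theory of Schützenberger graphs to reduce the $E$-unitary property to a statement about these graphs, and then to establish that statement using the cycle-free hypothesis on the left and right graphs of the presentation. Let $S = Inv\langle X|R\rangle$ be an Adian inverse semigroup and let $G = Gp\langle X|R\rangle$, so that $G = S/\sigma$. By the definition of $E$-unitarity it suffices to show that every word $w$ over $X\cup X^{-1}$ whose image $\sigma(w)$ is the identity of $G$ represents an idempotent of $S$. Recall that $w$ represents an idempotent precisely when the initial and terminal vertices $\alpha$ and $\beta$ of its Schützenberger graph $S\Gamma(w)$ coincide. Thus the entire theorem reduces to the implication: if $\sigma(w)=1$ then $\alpha = \beta$ in $S\Gamma(w)$.

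To connect $S\Gamma(w)$ with the group, I would use the canonical label-preserving graph morphism $\theta\colon S\Gamma(w)\to \mathrm{Cay}(G,X)$ into the Cayley graph of $G$, determined by sending $\alpha$ to the identity vertex. This morphism is well defined because every relation of $R$ holds in $G$, so that the labels of each closed path of $S\Gamma(w)$ evaluate to the identity of $G$. Under $\theta$ the terminal vertex $\beta$ is sent to $\sigma(w)$. Hence, once we assume $\sigma(w)=1$, we obtain $\theta(\alpha)=\theta(\beta)=1$, and the desired conclusion $\alpha=\beta$ would follow immediately if $\theta$ were injective. The crux of the argument is therefore to prove that $\theta$ is injective, i.e.\ that $S\Gamma(w)$ embeds in the Cayley graph of $G$.

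I would prove this injectivity by induction along Stephen's iterative construction of $S\Gamma(w)$ as a directed limit of finite approximate graphs $\Gamma_0\to\Gamma_1\to\cdots$, where $\Gamma_0$ is the linear automaton of $w$ and each successive graph is obtained either by sewing on a relation of $R$ (an expansion) or by a folding. The inductive invariant to maintain is that two vertices of $\Gamma_n$ are identified in the limit $S\Gamma(w)$ if and only if they already have the same image under the induced morphism into $\mathrm{Cay}(G,X)$. Expansions preserve this invariant because $G$ satisfies the relations of $R$. The delicate case is folding: one must show that whenever two distinct edges force an identification of vertices, those vertices already carry equal images in the Cayley graph, so that no \emph{spurious} identification is ever created. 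Here the cycle-free hypothesis enters decisively: the absence of closed paths in $LG\langle X|R\rangle$ controls the determinacy of the foldings performed while reading forward along positive segments, while the absence of closed paths in $RG\langle X|R\rangle$ controls the co-determinacy of the foldings performed reading backward. Together they should guarantee that the subgraph generated by any positive reading is forced, never folding back on itself except as dictated by $G$.

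The step I expect to be the main obstacle is exactly this control of foldings on positive segments. Concretely, I would need to rule out the possibility that two positive paths issuing from a common vertex and spelling words $p$ and $q$ equal in $G$ terminate at distinct vertices of some $\Gamma_n$. By the embedding theorem of Adian and Remmers the equality $p =_G q$ forces $p = q$ already in the Adian semigroup $Sg\langle X|R\rangle$, and one then argues that any nontrivial way of reconciling two distinct positive readings would manufacture a closed path in $LG\langle X|R\rangle$ or in $RG\langle X|R\rangle$, contradicting cycle-freeness. Making this reconciliation argument precise, by tracking how positive segments overlap and meet as expansions and foldings are interleaved, is the technical heart of the proof; once it is in place, injectivity of $\theta$ follows, and with it the conclusion that $\sigma^{-1}(1)=E(S)$, that is, that $S$ is $E$-unitary.
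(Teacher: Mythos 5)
First, note that the paper does not actually prove this theorem: it is quoted from the reference \cite{E-unitary} (Inam--Meakin--Ruyle), so there is no internal proof to compare your argument against. Your reduction of $E$-unitarity to the injectivity of the canonical morphism $\theta\colon S\Gamma(w)\to \mathrm{Cay}(G,X)$ for every $w\in (X\cup X^{-1})^*$ is a correct and standard equivalence (if $\theta$ is injective and $\sigma(w)=1$ then $\alpha=\beta$, i.e.\ $ww^{-1}=w$; conversely $E$-unitarity forces injectivity on each $\mathscr{R}$-class), so the overall target is legitimate. The problems are in how you propose to establish injectivity.

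The step you single out as delicate --- showing that a folding never identifies two vertices with distinct Cayley-graph images --- is in fact vacuous: a folding identifies the endpoints $v_1,v_2$ of two edges $(v,x,v_1)$, $(v,x,v_2)$, and these always satisfy $\theta(v_1)=\theta(v)x=\theta(v_2)$ because the Cayley graph of $G$ is deterministic. So no hypothesis on $LG\langle X|R\rangle$ or $RG\langle X|R\rangle$ is needed there, and proving it only re-establishes that $\theta$ is well defined; it contributes nothing to injectivity, whose content lies entirely in the opposite direction (distinct vertices of the limit must have distinct images). Your only substantive attack on that direction is the last paragraph, and there the Adian--Remmers embedding theorem does indeed finish the job for the configuration you describe: if two \emph{positive} paths from a common vertex are labelled by $p$ and $q$ with $p=_G q$, then $p=q$ in $Sg\langle X|R\rangle$, hence in $M$, hence the paths end at the same vertex of $S\Gamma(w)$ --- no further ``manufacture a cycle in $LG$ or $RG$'' argument is needed. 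The genuine gap is that for an arbitrary word $w\in (X\cup X^{-1})^*$ two vertices of $S\Gamma(w)$ with the same image under $\theta$ need not be the endpoints of two positive paths issuing from a common vertex: the unique-source and positive-reachability structure (Proposition 3 of \cite{AD}, echoed in Proposition \ref{3} here) is established only for $w\in X^+$. Reducing the general case to the positive case is essentially the content of the machinery behind Theorem \ref{PMT}, and without it your induction does not close. As written, the proposal proves injectivity only for Sch\"utzenberger graphs of positive words, which does not suffice for $E$-unitarity since the words $w$ with $\sigma(w)=1$ are never positive reduced words in interesting cases.
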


By using Theorem \ref{E-unitary}, we can make the following remark about one relation Adian inverse semigroups. 

\begin{remark} Let $M=Inv\langle X|u=v\rangle$ be an Adian inverse semigroup. Then the membership problem for the set of idempotents of $M$, $E(M)$, is decidable.
\end{remark}

The following result has been proved in \cite{AD}

\begin{theorem} \label{PMT} Let $M=Inv\langle X|R\rangle$ be a finitely presented Adian inverse semigroup. Then the Sch\"{u}tzenberger graph of $w$, for all $w\in (X\cup X^{-1})^+$, is finite if and only if the Sch\"{u}tzenberger graph of $w'$ is finite, for all $w'\in X^+$. 
\end{theorem}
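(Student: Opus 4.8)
The plan is to prove the two implications separately, with essentially all of the content living in the backward direction. The forward implication is immediate: since $X^+\subseteq (X\cup X^{-1})^+$, if every $S\Gamma(w)$ with $w\in(X\cup X^{-1})^+$ is finite then in particular every $S\Gamma(w')$ with $w'\in X^+$ is finite. So I would dispose of this in one line, then fix the hypothesis that $S\Gamma(t)$ is finite for all $t\in X^+$, take an arbitrary $w\in(X\cup X^{-1})^+$, and aim to show $S\Gamma(w)$ is finite.

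First I would reduce from arbitrary words to idempotents. Since two words are $\mathcal{R}$-related in $M$ exactly when their Schützenberger graphs coincide as graphs based at the initial vertex, and since $w\,\mathcal{R}\,ww^{-1}$, the graph $S\Gamma(w)$ agrees with $S\Gamma(ww^{-1})$ except for the position of the terminal vertex; in particular one is finite iff the other is. Thus it suffices to bound $S\Gamma(e)$ for the idempotent $e=ww^{-1}$. The advantage is that, by Theorem \ref{E-unitary}, $M$ is $E$-unitary, so every idempotent maps to $1$ in $G=Gp\langle X|R\rangle$ and, more usefully, the whole Schützenberger graph $S\Gamma(e)$ embeds as a labelled graph, injectively on vertices, into the Cayley graph of $G$ with respect to $X$. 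I would therefore regard $S\Gamma(w)$ as a connected subgraph of this Cayley graph, with vertices identified with a subset of $G$, edges being right translations by generators, and Stephen's iterative construction producing it as a limit of foldings and positive expansions, where each relation $u_i=v_i$ sews a positive $v_i$-path wherever a positive $u_i$-path already exists.

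The core of the argument is to cover $S\Gamma(w)$ by finitely many Schützenberger graphs of positive words. Reading $w$ in the Cayley graph, I would factor it through its maximal positive runs and isolate the inverse letters, producing from $w$ a finite list of positive words $p_1,\dots,p_k\in X^+$ (for instance the positive blocks together with the positive words that bound each inverse letter) and a finite list of group elements $g_1,\dots,g_k$, and then claim that the vertex set of $S\Gamma(w)$ is contained in the union $\bigcup_j g_j\cdot V\big(S\Gamma(p_j)\big)$ of translated vertex sets. Granting this, each $S\Gamma(p_j)$ is finite by hypothesis, the union is finite, and since $S\Gamma(w)$ is a connected subgraph of the Cayley graph supported on finitely many vertices it is finite, completing the proof. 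The same reduction can alternatively be organised as an induction on the number of inverse letters in $w$, removing one inverse letter at a time and relating the resulting graph to graphs already known to be finite.

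The hard part will be justifying the covering claim, that is, proving that Stephen's expansions applied to the initial path of $w$ never create a vertex outside the finite union of translated positive-word graphs. This is exactly where both hypotheses must be used: $E$-unitarity keeps every intermediate graph inside the single Cayley graph of $G$, so that ``new'' vertices are genuinely new group elements rather than artefacts of folding, while the Adian condition, namely the absence of cycles in $LG\langle X|R\rangle$ and $RG\langle X|R\rangle$, is what prevents positive expansions from cascading unboundedly and forces each expansion emanating from a neighbourhood of an inverse letter to stay within one of the prescribed positive-word graphs. Using in addition that the Adian semigroup $Sg\langle X|R\rangle$ embeds in $G$, so that positive length behaves well, I would expect to extract from the cycle-free condition a monotonicity or ``no positive cycle'' statement in the Cayley graph that bounds, in terms of the lengths of the $p_j$, how far any expansion can travel, thereby closing the induction.
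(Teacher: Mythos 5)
First, a point of reference: this paper does not prove Theorem \ref{PMT} at all --- it imports the statement from \cite{AD} and uses it as a black box. So there is no internal proof to measure your attempt against, and your proposal has to stand entirely on its own.

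Your reductions are fine as far as they go: the forward implication is trivial, passing from $w$ to the idempotent $ww^{-1}$ is harmless since the two are $\mathscr{R}$-related, and by Theorem \ref{E-unitary} together with the standard fact that $\sigma\cap\mathscr{R}$ is trivial in an $E$-unitary inverse semigroup, the vertex map from $S\Gamma(w)$ into the Cayley graph of $Gp\langle X|R\rangle$ is injective. The gap is the covering claim, which you yourself flag as ``the hard part'' and then only gesture at. That claim is the entire content of the theorem, and in the strong form you state it --- a finite list of translates $g_1\cdot V(S\Gamma(p_1)),\dots,g_k\cdot V(S\Gamma(p_k))$ determined in advance from $w$ --- it is far from clear and quite possibly needs to be weakened. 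The danger is cascading: after folding away an inverse letter, the pieces $g_i S\Gamma(p_i)$ and $g_j S\Gamma(p_j)$ share vertices, and an elementary $\mathscr{P}$-expansion can then apply to a positive path that crosses from one piece into another; such a path spells a new positive word $q$ not on your list, saturating it drags in a copy of $S\Gamma(q)$ whose vertices need not lie in the original union, and this in turn creates further crossing paths. Nothing in the cycle-free condition obviously bounds how many rounds of this occur or the lengths of the words $q$ that arise. Indeed, this is precisely the difficulty the present paper confronts in the purely positive case in Theorem \ref{MT2}: there $S\Gamma(w_1)$ and $S\Gamma(w_2)$ meet at a single vertex $\gamma$, and controlling the expansions that cross $\gamma$ requires the whole apparatus of subgraphs generated by $R$-words, transversals, and $S$-diagrams --- it is not a one-line consequence of the Adian condition. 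Until you supply a termination argument for the cascade, or an a priori reason why no expansion ever escapes the prescribed union, the proof is incomplete at its central step.
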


In this paper, we prove the following result. 

\begin{theorem}\label{MT2}  Let $M=Inv\langle X|R\rangle$ be a finitely presented Adian inverse semigroup.  The Sch\"{u}tzenberger graph of every positive word  is finite if and only if the subgraphs of $S\Gamma(w)$, for all $w\in X^+$, generated by all the $R$-words are finite.

\end{theorem}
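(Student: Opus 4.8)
The forward implication requires no work: the subgraph of $S\Gamma(w)$ generated by an $R$-word $r\sqsubseteq w$ is by construction a subgraph of $S\Gamma(w)$, so if every $S\Gamma(w)$ is finite then every such subgraph is trivially finite. All the content lies in the converse, and that is what I would concentrate on.

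For the converse I would fix $w\in X^+$ and build $S\Gamma(w)$ by Stephen's iterative procedure: put $\mathcal{A}_0=\mathrm{Lin}(w)$, the linear automaton of $w$, and obtain $\mathcal{A}_{n+1}$ from $\mathcal{A}_n$ by a single elementary expansion — sewing the side $v_i$ of a relation onto a path labelled $u_i$, or conversely — followed by complete edge-folding, so that $S\Gamma(w)=\varinjlim_n\mathcal{A}_n$ and $S\Gamma(w)$ is finite precisely when this sequence stabilizes. The goal is a \emph{covering statement}: $S\Gamma(w)$ is the union of $\mathrm{Lin}(w)$ with the subgraphs generated by the (finitely many) occurrences of $R$-words in the spine of $w$. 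Granting this, $S\Gamma(w)$ is a finite union of graphs each finite by hypothesis, hence finite.

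To prove the covering I would argue that every expansion carried out in the procedure is located at some occurrence of an $R$-word, and then track these occurrences by their \emph{generation}: an occurrence is either already present in $\mathrm{Lin}(w)$, or it is created by an earlier expansion (including its folding) and therefore lies inside the subgraph that expansion was building. Running a well-founded induction on generation, every expansion — and hence every vertex and edge it contributes — is absorbed into the subgraph generated by one of the original spine occurrences, because that subgraph is by definition closed under the cascade of expansions it sets off. The main obstacle is the possibility of a \emph{straddling} occurrence: an expansion may create a new positive path that, together with pre-existing edges, forms an occurrence of an $R$-word spanning the spine and an attached subgraph, or spanning two distinct attached subgraphs; such an occurrence does not obviously belong to any single subgraph generated by a spine occurrence, and a priori its own cascade could escape into fresh territory and break finiteness.

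Controlling these straddling occurrences is exactly where the Adian hypothesis is indispensable, and this is the step I expect to be hardest. Since $LG\langle X|R\rangle$ and $RG\langle X|R\rangle$ are cycle-free, two $R$-words cannot overlap arbitrarily; the admissible overlaps are severely restricted and \emph{directional}, and combined with the $E$-unitary property (Theorem \ref{E-unitary}) and the embedding into the Adian group (the embedding theorem above) this forbids precisely the configurations that would let a cascade produce an unbounded, cyclic pattern of fresh expansions. I would use this to show that a straddling occurrence either folds back into, or has its entire cascade contained in, the union of the finitely many subgraphs already present, so the covering statement is maintained throughout the construction. The remainder is bookkeeping: invoking the independence of the direct limit from the order of expansions (Stephen's theorem) to perform the expansions spine-occurrence by spine-occurrence, with the base case $\mathrm{Lin}(w)$ (no $R$-word occurrence, already finite) and each step attaching one more finite subgraph generated by an $R$-word.
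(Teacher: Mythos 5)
Your covering statement is the crux, and as you have formulated it --- that $S\Gamma(w)$ is the union of $\mathrm{Lin}(w)$ with the subgraphs generated by the occurrences of $R$-words \emph{in the spine of $w$} --- it is not what the hypothesis delivers, and your plan for handling the straddling occurrences goes in the wrong direction. You correctly identify that an expansion can create an occurrence of an $R$-word spanning two previously attached pieces, but you then propose to show that its cascade ``folds back into, or has its entire cascade contained in, the union of the finitely many subgraphs already present.'' That is generally false: such a cascade genuinely produces new regions lying in none of the subgraphs generated by the original spine occurrences. The correct move --- and the reason the theorem's hypothesis quantifies over \emph{all} positive words $t\in X^+$, not just over $w$ --- is to observe that a straddling occurrence is an occurrence of an $R$-word in a \emph{new} positive word, namely the label $t_i$ of a positive transversal of the partially built graph, and to invoke the hypothesis for $t_i$ to attach a new finite subgraph $\Gamma_j$ generated by that occurrence. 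One then has to prove a saturation claim: every segment labeled by an $R$-word in the enlarged graph that runs between the old part and a $\Gamma_i$, or between two distinct $\Gamma_i$'s, is already contained in some $\Gamma_k$. That claim is the real work (the paper proves it using the $S$-diagram machinery and the directional properties of these graphs from \cite{AD}), and nothing in your sketch substitutes for it; appeals to $E$-unitarity and the embedding into the Adian group do not address it. A side remark: by Lemma \ref{1} no folding occurs at all for positive words, so ``folds back into'' cannot be part of the mechanism.

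There is also a structural problem with your induction. A well-founded induction ``on generation'' of occurrences does not obviously terminate --- the whole issue is that cascades could a priori go on forever, and each new generation can spawn straddling occurrences of its own. The paper instead inducts on the number $n$ of (not necessarily distinct) $R$-word occurrences in $w$, factorizing $w\equiv w_1w_2$ with each factor containing fewer than $n$ occurrences, so that $S\Gamma(w_1)$ and $S\Gamma(w_2)$ are finite and $\mathscr{P}$-complete by the inductive hypothesis. This confines all remaining unsaturated segments to those passing through the single junction vertex $\gamma$ where the two Sch\"utzenberger graphs meet, reducing the straddling problem to finitely many transversals through one vertex. Without some such reduction, your generation-by-generation bookkeeping has no mechanism forcing the process to stop. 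I would recommend restructuring the argument around that factorization and then supplying the saturation claims.
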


It follows from Theorem \ref{PMT} and \ref{MT2} that in a finitely presented Adian inverse semigroup $M=Inv\langle X|R\rangle$, if  the subgraphs of the Sch\"{u}tzenberger graph of $w'$, for all $w'\in X^+$, generated by all the $R$-words, are finite, then the Sch\"{u}tzenberger graph of $w$, for all $w\in (X\cup X^{-1})^+$, is finite, which implies the decidability of the word problem for $M$.

In this paper we also study the word problem for one relation Adian inverse semigroups and show that the word problem is decidable for the following classes of one relation Adian inverse semigroups.

\begin{theorem}\label{MT1} Let $M=Inv\langle X|u=v\rangle$ be an Adian inverse semigroup, such that no $R$-word is a subword of the other $R$-word, and the relation $(u,v)$ is in one of the following forms:
\begin{enumerate} 

\item No $R$-word overlaps with itself or with the other $R$-word.

\item One of the $R$-words overlaps with itself, and the other $R$-word neither overlaps with itself nor with the former $R$-word. 

\item Both $R$-words overlap with themselves, there is no overlap between both the $R$-words, and at least one of the $R$-words is not of the form $x^n$, for some $x\in X^+$ and $n\geq 2$.  

\item Prefix of one $R$-word is a suffix of the other $R$-word, no suffix of the former $R$-word is a prefix of  the latter $R$-word, and no $R$-word overlaps with itself. 
\end{enumerate}

Then the word problem is decidable for $M$. \end{theorem}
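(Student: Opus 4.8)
The plan is to use Theorem \ref{MT2} as the main reduction tool: it suffices to show that in each of the four cases, the subgraphs of $S\Gamma(w)$ generated by all the $R$-words are finite for every $w\in X^+$. Since the presentation is Adian (cycle-free in both the left and right graphs) and involves a single relation $u=v$ with $u,v\in X^+$, the combinatorial structure of how the two $R$-words $u$ and $v$ can interact is extremely constrained. The overall strategy is to analyze, for each case, the ways an $R$-word can be ``read'' along a path in the Stephen iterative construction of $S\Gamma(w)$, and to show that the edge-foldings forced by applying the relation do not cascade indefinitely, so that the subgraph generated by the $R$-words stabilizes after finitely many steps.

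First I would set up the machinery of Stephen's procedure: build $S\Gamma(w)$ as a directed limit of finite approximate automata, alternately sewing on $R$-word paths (expansions) and applying the inverse-semigroup determinism condition (foldings), following the standard iterative construction for inverse semigroup Schützenberger graphs. The key invariant to track is how many distinct occurrences of $u$ or $v$ a fold can create, and whether applying the relation at one occurrence can force the relation to become applicable at a new, previously absent occurrence. I would formalize the notions of ``overlap'' (a nonempty proper suffix of one $R$-word equal to a nonempty proper prefix of an $R$-word) and of one $R$-word being a subword of the other, since the case hypotheses in Theorem \ref{MT1} are exactly statements about which of these combinatorial coincidences are forbidden.

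The heart of the argument is a case-by-case bounding of the subgraph. In case (1), since neither $R$-word overlaps itself or the other and neither is a subword of the other, any path labeled by an $R$-word occurs in an essentially isolated fashion, so each application of the relation creates no new applicable occurrences and the generated subgraph is immediately finite. In case (2), a self-overlap of one $R$-word can in principle create a chain of applicable positions, so I would argue that the Adian (cycle-free) hypothesis on the left and right graphs, together with the positivity of the relation, forces any such chain to terminate; the self-overlap of a single word $x$ with itself corresponds to a periodicity $x=p s=s q$, and I would bound the number of foldings in terms of the length of the overlap. Case (3) is similar but now both words self-overlap; the extra hypothesis that at least one $R$-word is not a proper power $x^n$ is precisely what rules out the pathological infinite-periodicity configuration (a purely periodic word $x^n$ can generate arbitrarily long overlap chains), so the finiteness argument of case (2) can be pushed through on the non-power side and then transferred. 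Case (4) introduces a prefix-suffix coincidence across the two words (a ``cross overlap''), but the asymmetry hypothesis (no suffix of the former is a prefix of the latter, and no self-overlaps) prevents the cross overlaps from chaining in both directions, again yielding a finite bound.

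The hard part will be case (3), and more precisely isolating exactly why a proper power $x^n$ must be excluded while any non-power self-overlapping word is harmless. A proper power admits overlaps of every length that is a multiple of $|x|$, so paths labeled by it can fold onto shifted copies of themselves and generate an unbounded ``ladder'' in the Schützenberger graph; the claim is that a non-power self-overlapping word has only overlaps of a bounded, controlled set of lengths (governed by the Fine--Wilf periodicity structure), so the induced foldings terminate. Making this rigorous requires a careful periodicity/combinatorics-on-words lemma bounding the total number of $R$-word occurrences the generated subgraph can contain, and then verifying that this bound is uniform enough to conclude finiteness of the generated subgraph for every $w\in X^+$. Once that bound is established in each of the four cases, finiteness of the generated subgraphs follows, and Theorem \ref{MT2} together with Theorem \ref{PMT} immediately yields decidability of the word problem for $M$.
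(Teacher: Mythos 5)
Your high-level reduction is the same as the paper's: invoke Theorem \ref{MT2} to reduce decidability to finiteness of the subgraphs of $S\Gamma(w)$ generated by the $R$-words, handle the four cases by analyzing the overlap hypotheses, and finish with Theorem \ref{PMT}. Cases (1) and (2) of your sketch are essentially in line with the paper (which handles (1) even more directly in Lemma \ref{L2}, showing $S\Gamma(w)$ itself closes after one full $\mathscr{P}$-expansion, and handles (2) in Lemma \ref{L3}, where the construction stops after at most two iterations because the segments sewn on at the second step are labeled by the non-overlapping $R$-word and so create no new unsaturated segments).

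There are, however, two genuine problems. First, your termination mechanism is phrased throughout in terms of \emph{foldings} (``bound the number of foldings in terms of the length of the overlap'', ``the induced foldings terminate''). For an Adian presentation and a positive word, Lemma \ref{1} says that no two edges ever fold in Stephen's procedure; the only way the generated subgraph can be infinite is through an unbounded cascade of elementary $\mathscr{P}$-expansions, i.e., each newly sewn segment creating a new unsaturated segment. The quantity to bound is the number of regions, not the number of foldings. Second, and more seriously, in cases (3) and (4) your proposed bound --- a Fine--Wilf-style combinatorics-on-words lemma controlling the admissible overlap shifts of a non-power word --- is left unformulated and is not obviously the right invariant: having only boundedly many overlap shifts does not by itself prevent an infinite chain of expansions, since each new generation can reuse the same shift at a new location. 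The paper's actual mechanism is geometric: Propositions \ref{P6}, \ref{P8} and \ref{P9} (together with the type-$(2a)$ induction) show that under the case hypotheses every region of the generated subgraph must use an edge of the linear automaton of $w$; equivalently, a ``special region'' detached from the linear automaton can arise only when both $R$-words are proper powers $x^n$ and $y^m$, which is exactly what case (3) excludes. Since the linear automaton has only $|w|$ edges, this immediately bounds the number of regions and yields finiteness. Without a substitute for this ``every region touches the linear automaton'' argument, your cases (3) and (4) do not close.
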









\section{Outline} 

In this paper, we study the word problem for  a finitely presented Adian inverse semigroup. 

Section $3$ of this paper provides some fundamental definitions and results of inverse semigroup theory that have been used in this paper. In this section, the notion of Sc\"{u}tzenberger graphs has been elaborated and an iterative procedure for constructing these graphs is provided. 

Section $4$ of this paper mainly consists of the proof of Theorem \ref{MT2}. At the beginning of this section we introduce the notion of a subgraph of a Sch\"{u}tzenberger graph, generated by an $R$-word. We provide an iterative procedure for constructing these subgraphs. We also provide some basic structural properties of these subgraphs. We give the definition of $S$-diagram as it has been used in the proof of Theorem \ref{MT2}. 

Section $5$ of this paper provides some of the applications of Theorem \ref{MT2}. In this section we study the word problem for one relation Adian inverse semigroups, and prove Theorem \ref{MT1}.

\section{Preliminaries}

For an alphabet $X$, the set $X^+$ denotes the set of all positive words on $X$, and the set $X^*$ denotes the set $X^+\cup \{\epsilon\}$, where $\epsilon$ is the empty word. The set $X^+$ forms a semigroup under the binary operation of concatenation of words, and called the free semigroup on $X$. The empty word $\epsilon$ serves as the identity element under the concatenation of words, and the set $X^*$ denotes the free monoid on $X$. If $\rho$ denotes the congruence generated by a set relations $R$, then $X^+/\rho$ is the \textit{semigroup} given by the presentation $Sg\langle X|R\rangle$. For $w_1, w_2\in X^*$, we write $w_1\equiv w_2$ when $w_1$ and $w_2$ are identical words, and write $w_1=w_2$ when  $w_1\rho=w_2\rho$ in the monoid $Mon\langle X|R\rangle := X^*/\rho$, where $\rho$ is the congruence generated by $R$. If $\rho$ denotes the Vagner congruence on the monoid $(X\cup X^{-1})^*$, then $FIM(X):=(X\cup X^{-1})^*/\rho$ is the \textit{free inverse monoid} on $X$. If $R\subseteq (X\cup X^{-1})^*\times (X\cup X^{-1})^*$, and $\tau$ denotes the congruence relation generated by $R\cup \rho$, then $M=Inv\langle X|R \rangle := (X\cup X^{-1})^*/\tau$ is the inverse monoid presented by the set of generators $X$ and the set of relations $R$. 

A \textit{labeled directed graph} over a set $X$ is a directed graph in which the edges are labeled by elements of $X$. We write $(u,x,v)$ to denote the edge labeled by $x$ with initial vertex $u$ and terminal vertex $v$. A \textit{path} or \textit{segment} of length $n$ is a sequence of edges 

$\{(v_0,x_1,v_1), (v_1, x_2, v_2),..., (v_{n-1}, x_n,v_n)\}$ 

such that the initial vertex of an edge (except the first) equals the terminal vertex of the previous edge. if $v_0=v_n$, the path is a cycle. We say that the path is labeled by the word $w=x_1 x_2....x_n$ and that $w$ can be read in the graph starting at $v_0$.

An \textit{inverse word graph} over $X$ is a labeled directed graph over $X\cup X^{-1}$ such that the labeling is consistent with an involution, that is, $(u,x, v)$ is an edge from a vertex $u$ to a vertex $v$ if and only if $(v,x^{-1},u)$ is an edge from $v$ to $u$. A \textit{birooted inverse word graph} is an inverse word graph $\Gamma$ with vertices $\alpha,\beta \in V(\Gamma)$ identified as the start and end vertices, respectively. The language $L[A]$ of a birooted inverse word graph $A=(\alpha, \Gamma, \beta)$ is the set of words that label a path from $\alpha$ to $\beta$ in $\Gamma$.  In a birooted inverse word graph over a presentation $\langle X|R\rangle$, for each relation $(r,s)\in R$ and a vertex $v$, if $r$ and $s$ can be read at $v$, then there is a \textit{region} with boundary given by the pair of paths labeled by $r$ and $s$ starting from $v$. Every region is simply connected, and so is homeomorphic to the open disk.In a birooted inverse word graph, for each relation $(r,s)\in R$ and a vertex $v$, if we can find a segment labeled by one side of the relation $r$ starting from the vertex $v$, but we do not find a segment labeled by the other side $s$ of the relation starting from $v$, then the segment labeled by $r$ is called an \textit{unsaturated segment}. 

 J. B. Stephen \cite{ST} introduced the notion of \textit{Sch\"{u}tzenberger graphs} to solve the word problem for inverse semigroups. If $M=Inv\langle X|R\rangle$ is an inverse semigroup then we may consider the corresponding Cayley graph $\Gamma(M,X)$. The vertices of this graph are labeled by the elements of $M$ and there exists a directed edge labeled by $x\in X\cup X^{-1}$ from the vertex labeled by $m_1$ to the vertex labeled by $m_2$ if $m_2 = m_1x$.   The Cayley graph $\Gamma(M,X)$ is not necessarily strongly connected unless $M$ happens to be a group because it may happen that when there is an edge labeled by $x$ from $m_1$ to $m_2$, there is no edge labeled by $x^{-1}$ from $m_2$ to $m_1$ (so, $m_2 = m_1x$, but $m_1 \neq m_2x^{-1}$). The strongly connected components of $\Gamma(M,X)$ are called the \textit{Sch\"{u}tzenberger graphs} of $M$. For any word $u\in (X\cup X^{-1})^*$, the strongly connected component of $\Gamma(M,X)$ that contains the vertex corresponding to  $u$ is the \textit{Sch\"{u}tzenberger graph of $u$} and it is denoted by $S\Gamma(M,X,u)$. In \cite{ST} it is shown that the vertices of $S\Gamma(M,X,u)$ are precisely those vertices that are labeled by the elements of the $\mathscr{R}$-class of $u$, i.e., $R_u = \{m\in M \mid mm^{-1} = uu^{-1}\}$.

 For any word $u\in (X\cup X^{-1})^*$, it is useful to consider the \textit{Sch\"{u}tzenberger automaton} $(uu^{-1},S\Gamma(M,X,u),u)$ with initial vertex $uu^{-1}\in M$, terminal vertex $u\in M$ and with the Sch\"{u}tzenberger graph of $u$ as the underlying graph. The language accepted by this automaton is a subset of $(X\cup X^{-1})^*$ and will be denoted as $L(u)$.
 \[ L(u) = \{w\in (X\cup X^{-1})^* \mid w \mbox{ labels a path  from } uu^{-1} \mbox{ to } u \mbox{ in } S\Gamma(M,X,u)\}. \]
Here, $u$ and $w$ may be regarded both as elements of $(X\cup X^{-1})^*$ and as elements of $M$. Thus, $L(u)$ may be regarded as a subset of $(X\cup X^{-1})^*$ or as a subset of $M$.

The following result of Stephen \cite{ST} plays a key role in solving the word problem for inverse semigroups.
\begin{theorem}\label{Stephen's thm}
Let $M=Inv\langle X|R\rangle$ and let $u,v\in (X\cup X^{-1})^*$. 
\begin{enumerate}
\item $L(u)= \{w \mid w \geq u$ in the natural partial order on $M\}$.
\item The following are equivalent:
\begin{enumerate}
\item[(i)] $u = v$ in $M$.
\item[(ii)] $L(u)=L(v)$.
\item[(iii)] $u\in L(v)$ and $v\in L(u)$.
\item[(iv)] $(uu^{-1},S\Gamma(M,X,u), u)$ and $(vv^{-1},S\Gamma(M,X,v),v)$ are isomorphic as automata.
\end{enumerate}
\end{enumerate}

\end{theorem}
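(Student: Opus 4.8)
The statement naturally separates into part (1), the description of $L(u)$ via the natural partial order, and part (2), a chain of equivalences that will follow almost formally once part (1) is available. The plan is therefore to establish (1) first and to deduce (2) from it together with the elementary order-theoretic properties of an inverse monoid.

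For part (1) I would exploit two elementary features of the Cayley graph $\Gamma(M,X)$: first, that reading a word $w=x_1\cdots x_n$ from a vertex $s$ traverses the vertices $s, sx_1, sx_1x_2,\ldots, sw$ and so terminates at the vertex $sw$; and second, that the edges of $S\Gamma(M,X,u)$ are exactly those edges of $\Gamma(M,X)$ both of whose endpoints lie in $R_u=\{m\mid mm^{-1}=uu^{-1}\}$. For the inclusion $L(u)\subseteq\{w\mid w\geq u\}$, note that if $w\in L(u)$ then $w$ labels a path from $uu^{-1}$ to $u$, and since this is in particular a path in $\Gamma(M,X)$ its terminus is $uu^{-1}w$; hence $uu^{-1}w=u$, which is precisely $u\leq w$. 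For the reverse inclusion, suppose $uu^{-1}w=u$ and read $w$ from the vertex $uu^{-1}$, producing vertices $m_0=uu^{-1}, m_1,\ldots, m_n=uu^{-1}w=u$ with $m_k=m_{k-1}x_k$. The point is to check that every $m_k$ lies in $R_u$, so that the path actually lives in $S\Gamma(M,X,u)$. Since $x_kx_k^{-1}$ is an idempotent below the identity, multiplying on the left by $m_{k-1}$ and on the right by $m_{k-1}^{-1}$ gives $m_km_k^{-1}=m_{k-1}(x_kx_k^{-1})m_{k-1}^{-1}\leq m_{k-1}m_{k-1}^{-1}$, so the idempotents $m_km_k^{-1}$ form a descending chain
\[ uu^{-1}=m_0m_0^{-1}\geq m_1m_1^{-1}\geq\cdots\geq m_nm_n^{-1}=uu^{-1}. \]
As the chain begins and ends at the same idempotent $uu^{-1}$, every term equals $uu^{-1}$; hence each $m_k\in R_u$ and $w\in L(u)$.

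For part (2) I would close the cycle (i)$\Rightarrow$(ii)$\Rightarrow$(iii)$\Rightarrow$(i) and then tie in (iv). If $u=v$ in $M$ then the set $\{w\mid w\geq u\}$ depends only on the element, so part (1) gives $L(u)=L(v)$; this is (i)$\Rightarrow$(ii). Since $uu^{-1}u=u$ always holds we have $u\in L(u)$ and $v\in L(v)$, so $L(u)=L(v)$ forces $u\in L(v)$ and $v\in L(u)$, giving (ii)$\Rightarrow$(iii). By part (1), $u\in L(v)$ and $v\in L(u)$ mean $u\geq v$ and $v\geq u$, whence antisymmetry of the natural partial order yields $u=v$, which is (iii)$\Rightarrow$(i). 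Finally, if $u=v$ in $M$ then $uu^{-1}=vv^{-1}$ and the two Sch\"{u}tzenberger automata coincide as subautomata of $\Gamma(M,X)$ with the same roots, so they are trivially isomorphic, giving (i)$\Rightarrow$(iv); and a label-preserving automaton isomorphism carrying initial to initial and terminal to terminal preserves the accepted language, which for these automata is exactly $L(u)$ and $L(v)$, so (iv)$\Rightarrow$(ii).

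The one genuinely non-formal point is the descending-chain argument in the second inclusion of part (1): everything else is bookkeeping with the definitions of $L(u)$, the natural partial order, and automaton isomorphism. The delicacy there is to confirm that reading $w$ from $uu^{-1}$ never leaves the strongly connected component $S\Gamma(M,X,u)$; the observation that $m_km_k^{-1}$ decreases while returning to its starting value pins every intermediate vertex into the $\mathscr{R}$-class $R_u$, and this is the step I would expect to write out most carefully.
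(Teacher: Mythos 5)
The paper does not actually prove this theorem: it is stated verbatim as Stephen's result, with \cite{ST} cited for the proof, so there is no internal argument to compare against. Your proof is correct, and it is essentially the standard argument from the literature. The one genuinely substantive step is exactly where you locate it: given $uu^{-1}w=u$, reading $w$ from $uu^{-1}$ in the Cayley graph and showing the path never leaves $R_u$ via the descending chain $m_km_k^{-1}\leq m_{k-1}m_{k-1}^{-1}$ that returns to its starting value $uu^{-1}$, which by antisymmetry pins every $m_km_k^{-1}$ to $uu^{-1}$. Two conventions you use implicitly are worth a sentence each if this were written out in full. First, you take $S\Gamma(M,X,u)$ to contain every Cayley-graph edge whose endpoints both lie in $R_u$; this is consistent with the paper's definition via strongly connected components, and is justified because such an edge $(m_1,x,m_2)$ automatically admits the reverse edge: $m_2x^{-1}=m_1xx^{-1}\leq m_1$ while $(m_1xx^{-1})(m_1xx^{-1})^{-1}=m_2m_2^{-1}=m_1m_1^{-1}$, and in an inverse semigroup $a\leq b$ together with $aa^{-1}=bb^{-1}$ forces $a=b$, so $m_2x^{-1}=m_1$. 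Second, your chain inequality invokes compatibility of the natural partial order with left and right multiplication (equivalently, the commuting-idempotents computation $m_{k-1}x_kx_k^{-1}m_{k-1}^{-1}\cdot m_{k-1}m_{k-1}^{-1}=m_{k-1}x_kx_k^{-1}m_{k-1}^{-1}$); this is standard but is the only algebraic fact the argument really consumes. The derivation of part (2) from part (1) — the cycle (i)$\Rightarrow$(ii)$\Rightarrow$(iii)$\Rightarrow$(i) by antisymmetry, plus (i)$\Rightarrow$(iv) since $u=v$ forces $uu^{-1}=vv^{-1}$ and identical rooted components, and (iv)$\Rightarrow$(ii) since automaton isomorphisms preserve accepted languages — is complete as written.
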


 We briefly describe the iterative procedure described by Stephen \cite{ST} for building a Sch\"{u}tzenberger graph. Let $Inv\langle X|R\rangle$  be a presentation of an inverse monoid.

   Given a word $u=a_1a_2...a_n\in (X\cup X^{-1})^*$, the \textit{linear graph} of $u$  is the birooted inverse word graph $(\alpha_u,\Gamma_u,\beta_u)$ consisting of the set of vertices

   \begin{center}

   $V((\alpha_u,\Gamma_u,\beta_u))=\{\alpha_u,\beta_u,\gamma_1,...,\gamma_{n-1}\}$

   \end{center}

   and edges

   \begin{center}

   $(\alpha _u,a_1, \gamma _1),(\gamma _1,a_2,\gamma _2),..., (\gamma _{n-2},a_{n-1},\gamma _{n-1}),(\gamma _{n-1},a_n,\beta _u)$,

   \end{center}

    together with the corresponding inverse edges.

    Let $(\alpha , \Gamma ,\beta )$ be a birooted inverse word graph over $X\cup X^{-1}$. The following operations may be used to obtain a new birooted inverse word graph $(\alpha ',\Gamma ',\beta ')$:

    $\bullet$ \textbf{Determination} or \textbf{folding:} Let $(\alpha,\Gamma,\beta)$ be a birooted inverse word graph with vertices $v,v_1,v_2$, with $v_1\neq v_2$, and edges $(v,x,v_1)$ and $(v,x,v_2)$ for some $x\in X\cup X^{-1}$.

    Then we obtain a new birooted inverse word graph $(\alpha',\Gamma',\beta')$ via taking the quotient of $(\alpha,\Gamma,\beta)$ by the equivalence relation that identifies the vertices $v_1$ and $v_2$ and the two edges. In other words, edges with the same label coming out of a vertex are folded together to become one edge.

    $\bullet$ \textbf{Elementary $\mathscr{P}$-expansion:} Let $r=s$ be a relation in $R$ and $r$ can be read from $v_1$ to $v_2$ in $\Gamma$, but $s$ cannot be read from $v_1$ to $v_2$ in $\Gamma$. Then we define $(\alpha',\Gamma',\beta')$ to be the quotient of $\Gamma \cup (\alpha _s,\Gamma_s,\beta_s)$ by the equivalence relation that identifies vertices $v_1$ and $\alpha_s$ and vertices $v_2$ and $\beta_s$. In other words, we  ``sew" on a linear graph for $s$ from $v_1$ to $v_2$ to complete the other half of the relation $r=s$.

    An inverse word graph is \textit{deterministic} if no folding can be performed and is \textit{closed} if it is deterministic and no elementary expansion can be performed over a presentation $\langle X|R\rangle$. Note that given a finite inverse word graph it is always possible to produce a determinized form of the graph because determination reduces the number of vertices. So, the process of determination must stop after finitely many steps. We also observe that the process of folding is confluent \cite{ST} .

    If $(\alpha_1,\Gamma_1, \beta_1)$ is obtained from $(\alpha,\Gamma,\beta)$ by an elementary $\mathscr{P}$-expansion, and $(\alpha_2,\Gamma_2,\beta_2)$ is the determinized  form of $(\alpha_1,\Gamma_1,\beta_1)$, then we write $(\alpha,\Gamma,\beta)$\\$\Rightarrow (\alpha_2,\Gamma_2,\beta_2)$ and say that $(\alpha_2,\Gamma_2,\beta_2)$ is obtained from $(\alpha, \Gamma,\beta)$ by a  \textit{$\mathscr{P}$-expansion}. The reflexive and transitive closure of $\Rightarrow$ is denoted by $\Rightarrow ^*$.

    For $u\in (X\cup X^{-1})^*$, an \textit{approximate graph} of $(uu^{-1}, S\Gamma(u), u)$ is a birooted inverse word graph $A=(\alpha,\Gamma,\beta)$ such that $u\in L[A]$ and $y\geq u$ holds in $M$ for all $y\in L[A]$. Stephen showed in \cite{ST} that the linear automaton of $u$ is an approximate graph of $(uu^{-1}, S\Gamma(u), u)$. He also proved the following:

    \begin{theorem}\label{closure}
    Let $u\in (X\cup X^{-1})^*$ and let $(\alpha,\Gamma,\beta)$ be an approximate graph of $(uu^{-1},S\Gamma(u), u)$. If $(\alpha,\Gamma,\beta)\Rightarrow^*(\alpha',\Gamma',\beta')$ and $(\alpha',\Gamma',\beta')$ is closed, then $(\alpha',\Gamma',\beta')$ is the Sch\"{u}tzenberger automaton for $u$.
    \end{theorem}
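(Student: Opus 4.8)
The plan is to reduce Theorem \ref{closure} to two essentially independent facts about the rewriting relation $\Rightarrow$: that it is \emph{sound} (it never leaves the class of approximate graphs), and that it is, in a suitable sense, \emph{complete and confluent} (its terminal, i.e.\ closed, objects are forced to carry the full language $L(u)$ and to be unique up to isomorphism). Throughout I would write $\mathcal{S} = (uu^{-1}, S\Gamma(u), u)$ for the genuine Sch\"{u}tzenberger automaton and repeatedly use the description $L(u) = \{w : w \geq u\}$ from Theorem \ref{Stephen's thm}(1). The first thing I would record is that, because the Cayley graph of $M$ is deterministic and every relation of $R$ is already realized in it, $\mathcal{S}$ is itself \emph{closed}; this is what makes $\mathcal{S}$ the target of the canonical morphisms below.

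First I would set up the canonical morphism. For a connected birooted inverse word graph $A = (\alpha, \Gamma, \beta)$ I would show that $A$ is an approximate graph of $u$ (that is, $u \in L[A]$ and $L[A] \subseteq L(u)$) if and only if there is a necessarily unique label- and root-preserving graph morphism $\theta_A : A \to \mathcal{S}$; uniqueness is immediate from determinism of $\mathcal{S}$, since $\theta_A$ must send the endpoint of each path from $\alpha$ to the endpoint of the equally labelled path from $uu^{-1}$. Granting this, soundness becomes a short case check that $A \Rightarrow A'$ implies $A'$ is again approximate. For a folding that identifies two $x$-successors $v_1,v_2$ of a vertex $v$, determinism of $\mathcal{S}$ forces $\theta_A(v_1)=\theta_A(v_2)$, so $\theta_A$ factors through the quotient and yields $\theta_{A'}$; for an elementary $\mathscr{P}$-expansion that sews a linear $s$-graph onto an $r$-labelled segment from $v_1$ to $v_2$, closedness of $\mathcal{S}$ guarantees that $s$ too can be read from $\theta_A(v_1)$ to $\theta_A(v_2)$, so $\theta_A$ extends over the new segment. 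In both cases $u \in L[A']$ is clear, since expansions only add edges and foldings carry the $u$-path forward. Iterating along the derivation gives $L[A'] \subseteq L(u)$ and $u \in L[A']$ for the closed graph $A'$ of the statement.

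Next I would prove completeness, which is the crux. The target is the reverse inclusion $L(u) \subseteq L[A']$, equivalently that $\theta_{A'}$ is onto, equivalently that every $w$ with $w \geq u$ labels a path from $\alpha'$ to $\beta'$. The route I would take is to realize $\mathcal{S}$ as the directed colimit of a \emph{fair} expansion sequence starting from the linear automaton of $u$, one in which every applicable elementary expansion is eventually performed, and to show that this colimit is closed with language exactly $\{w : w \geq u\}$. The inclusion into $L(u)$ is soundness again; for the reverse I would translate a witness of $w \geq u$ in $M$, namely a finite sequence of Vagner reductions and applications of relations of $R$ taking $w$ down to $u$, into a finite list of foldings and elementary $\mathscr{P}$-expansions, so that after finitely many fair steps $w$ labels a path between the two roots. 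Matching the algebraic natural partial order with this combinatorial rewriting is where essentially all the work lives.

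Finally I would upgrade the colimit statement to the stated closed graph $A'$ by confluence. Folding is already known to be confluent \cite{ST}; I would combine this with the commutation of elementary $\mathscr{P}$-expansions (two applicable expansions can be carried out in either order up to determinization) and a Newman-type argument to conclude that all closed graphs reachable from a fixed approximate graph are isomorphic. Since the fair colimit above is one such closed graph and is isomorphic to $\mathcal{S}$, the particular closed $A'$ in the hypothesis must also be isomorphic to $\mathcal{S}$, i.e.\ $A'$ is the Sch\"{u}tzenberger automaton of $u$. I expect the main obstacle to be the completeness step: producing, for each $w \geq u$, an explicit finite sequence of expansions after which $w$ becomes readable, while checking that the bookkeeping of intervening foldings never destroys a reading once created — equivalently, showing that the rewriting faithfully computes the natural partial order rather than merely approximating it from below.
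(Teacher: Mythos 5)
You should first note that the paper does not prove Theorem \ref{closure} at all: it is quoted as a result of Stephen, with \cite{ST} as the source, so your proposal can only be measured against Stephen's original argument. Against that benchmark, your soundness step has the right shape but rests on a half-proved lemma. You claim that $A$ is approximate if and only if there is a (unique) canonical morphism $\theta_A : A \to \mathcal{S}$, and you justify only uniqueness (via determinism of $\mathcal{S}$). Existence is the nontrivial half: for a possibly nondeterministic approximate graph $A$, well-definedness of $\theta_A$ requires that any two co-terminal paths $p, p'$ from $\alpha$ satisfy $uu^{-1}p = uu^{-1}p'$ in $M$, which does not follow from determinism of the target and is not immediate from $L[A]\subseteq L(u)$. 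Since the theorem's hypothesis allows an \emph{arbitrary} approximate graph, your induction has an unproven base case. Stephen sidesteps this entirely by showing directly that one folding or one elementary expansion preserves the two defining conditions $u \in L[A]$ and $L[A]\subseteq L(u)$: for a folding, a path in the quotient lifts to a path in $A$ after inserting detour factors $x^{-1}x$ at the identified vertices, and deleting such idempotent factors only moves a word up in the natural partial order; for an expansion, every excursion into the sewn-on linear arc reads a word lying above $pp^{-1}$ (for $p$ a prefix of $s$) or above $s$ itself, hence above $rr^{-1}$ or $r$ in $M$, reducing membership to a path already present in $A$.

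The genuine gaps are in your completeness and uniqueness steps. Your ``Newman-type argument'' is not available: Newman's lemma requires termination, and $\Rightarrow$ does not terminate in general (Sch\"{u}tzenberger graphs can be infinite, and this paper exists precisely because finiteness is a theorem, not a given); local confluence of a non-terminating system does not yield uniqueness of closed forms. Likewise the fair-colimit detour is both unnecessary and, as you yourself concede, left unexecuted at its crucial point (translating a witness of $w \geq u$ into finitely many expansions while controlling foldings). Stephen's actual completeness argument works directly on the closed graph $A'$ and is short: closedness and determinism imply that readability of a word between two fixed vertices of $A'$ is invariant under elementary $\tau$-transformations — a factor $r$ may be exchanged for $s$ between the same endpoints (closedness, in both directions), and Vagner factors $tt^{-1}t \leftrightarrow t$ may be inserted or deleted because in a deterministic inverse word graph reading $tt^{-1}$ from any vertex returns to that vertex. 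Now if $w \geq u$ in $M$, then $u = uu^{-1}w$, so the words $u$ and $uu^{-1}w$ are $\tau$-equivalent; since $u \in L[A']$, invariance gives $uu^{-1}w \in L[A']$, and by determinism the prefix $uu^{-1}$ reads a loop at $\alpha'$, whence $w \in L[A']$. Thus $L[A'] = L(u)$ with no colimit, fairness, or confluence machinery, and the uniqueness you hoped to extract from confluence instead falls out of the language equality: closed, connected, deterministic birooted inverse word graphs are determined up to isomorphism by their languages, exactly as in Theorem \ref{Stephen's thm}(2), so $A' \cong (uu^{-1}, S\Gamma(u), u)$.
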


    In \cite{ST}, Stephen showed that the class of all birooted inverse words graphs over $X\cup X^{-1}$ is a co-complete category  and that the directed system of all finite $\mathscr{P}$-expansions of a linear graph of $u$ has a direct limit. Since the directed system includes all possible $\mathscr{P}$-expansions, this limit must be closed. Therefore, by \ref{closure}, the Sch\"{u}tzenberger graph of $u$ is the direct limit.

    \textbf{Full $\mathscr{P}$- expansion (a generalization of the concept of $\mathscr{P}$-\\ expansion):} A full $\mathscr{P}$-expansion of a birooted inverse word graph $(\alpha,\Gamma,\beta)$ is obtained in the following way:

    $\bullet$ Form the graph $(\alpha',\Gamma',\beta')$, which is obtained from $(\alpha,\Gamma,\beta)$ by performing all possible elementary $\mathscr{P}$-expansions of $(\alpha,\Gamma,\beta)$, relative to $(\alpha,\Gamma,\beta)$. We emphasize  that an elementary $\mathscr{P}$-expansion may introduce a path labeled by one side of relation in $R$, but we do not perform an elementary $\mathscr{P}$-expansion that could not be done to $(\alpha,\Gamma,\beta)$ when we do a full $\mathscr{P}$-expansion.

    $\bullet$ Find the determinized form $(\alpha_1,\Gamma_1,\beta_1)$, of $(\alpha',\Gamma',\beta')$.

    The birooted inverse word graph $(\alpha_1,\Gamma_1,\beta_1)$ is called the full $\mathscr{P}$-expansion of $(\alpha,\Gamma,\beta)$. We denote this relationship by $(\alpha,\Gamma,\beta)\Rightarrow_f (\alpha_1,\Gamma_1,\beta_1)$.  If $(\alpha_n,\Gamma_n,\beta_n)$ is obtained from $(\alpha,\Gamma ,\beta)$ by a sequence of full $\mathscr{P}$-expansions then we denote this by   $(\alpha,\Gamma ,\beta)\Rightarrow^*_f(\alpha_n,\Gamma_n,\beta_n)$.

For any word $w\in (X\cup X^{-1})^*$, the sequence of birooted approximate graphs $\{(\alpha_i,\Gamma_i(w),\beta_i):i\in I\}$ converges to the Sch\"{u}tzenberger graph of $w$.  In a finitely presented inverse semigroup $Inv\langle X|R\rangle$, there exists a graph morphism $\phi_i:(\alpha_i,\Gamma_i(w),\beta_i)\to (\alpha_{i+1},\Gamma_{i+1}(w),\beta_{i+1})$, for any $w\in (X\cup X^{-1})^*$, and $i\in I$. If  $Inv\langle X|R\rangle$ happens to be a finitely presented Adian inverse semigroup, and $w\in X^+$, then it has been proved in Proposition 3 of \cite{AD} that the birooted graph $(\alpha_i,\Gamma_i(w),\beta_i)$ embeds in $ (\alpha_{i+1},\Gamma_{i+1}(w),\beta_{i+1})$, for all $i\in I$.  Those regions which appear in $(\alpha_i,\Gamma_i(w),\beta_i)$ as a consequence of application of full $\mathscr{P}$- expansion on $ (\alpha_{i-1},\Gamma_{i-1}(w),\beta_{i-1})$, are called the \textit{$i$-th generation regions}, for all $i\in I$. 

\section{Subgraphs generated by an $R$-word of a Sch\"{u}tzenberger graph}

The following lemma is proved in \cite{AD} and it implies that we only use elementary $ \mathscr{P}$-expansions, and no foldings in the construction of $S\Gamma(w)$, for some $w\in X^+$.

\begin{lemma}\label{1} Let $M=Inv\langle X,R\rangle$ be an Adian inverse semigroup and $w\in X^+$. Then no two edges fold together in Stephen's process of constructing approximations of the Sch\"{u}tzenberger graph of $w$. 
\end{lemma}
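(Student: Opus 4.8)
The plan is to show that every approximate graph arising in Stephen's construction of $S\Gamma(w)$ is already \emph{deterministic}, so that the folding operation is never applicable. I would first reduce the statement to a determinism condition on positive edges. A folding is triggered exactly when some vertex carries two distinct out-edges with the same label $x\in X\cup X^{-1}$. Since $w\in X^+$ and every relation $(u_i,v_i)\in R$ consists of positive words, every linear graph sewn on during an elementary $\mathscr{P}$-expansion is a positive path; hence each edge of every approximate graph is a positive edge (labeled in $X$) or the formal inverse of one. Thus a folding situation is of one of two kinds: a vertex $v$ has two distinct out-edges labeled by a common $x\in X$ (failure of \emph{forward} determinism), or it has two distinct out-edges labeled by a common $x^{-1}$, which by the involution means two distinct positive edges labeled $x\in X$ point into $v$ (failure of \emph{backward} determinism). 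I would rule out the first kind using the left graph $LG\langle X\mid R\rangle$ and the second, symmetrically, using the right graph $RG\langle X\mid R\rangle$.

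I would argue by induction on the generation index $i$. The base case is the linear graph $(\alpha_w,\Gamma_w,\beta_w)$, a simple directed positive path in which every vertex has at most one incoming and at most one outgoing positive edge, so it is both forward- and backward-deterministic. For the inductive step, assume $\Gamma_i(w)$ has this property and consider the full $\mathscr{P}$-expansion producing $\Gamma_{i+1}(w)$. The interior vertices of each newly sewn linear path are fresh and are not shared between distinct sewn paths; distinct sewn paths meet only at the \emph{old} vertices that served as their attaching endpoints. Consequently the graph obtained before determination can fail to be deterministic only at such an old vertex, and only because a newly sewn side begins (or ends) there with the same first (or last) letter as an already-present edge. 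In other words, to complete the induction it suffices to exclude collisions of the first edge (forward) and of the last edge (backward) of each newly sewn side at its attaching vertex; if none occur, no folding is possible and $\Gamma_{i+1}(w)$ is deterministic.

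It remains to exclude such an endpoint collision, and here the Adian hypothesis enters. Suppose a side $s$ is sewn at a vertex $a$ because its partner $r$ (with $r=s\in R$) reads from $a$, and suppose the first edge of $s$ collides with an existing out-edge of $a$ carrying the label $x=s_1$. The simplest possibility, that the colliding edge is the first edge of $r$ itself, would force the first letters of $r$ and $s$ to coincide; but then $LG\langle X\mid R\rangle$ would contain a loop at that letter, contradicting cycle-freeness. In general the colliding edge belongs to some other relation side already read at or through $a$, and I would trace the coincidence of first letters, using the forward determinism of $\Gamma_i(w)$ to propagate it, through a sequence of relations whose prefix letters are successively identified by edges of $LG\langle X\mid R\rangle$. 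Since $X$ is finite and each identification is forced, this walk must eventually repeat a letter and thereby close into a left cycle, contradicting the Adian hypothesis. The backward case is identical after replacing first letters by last letters, out-edges by in-edges, and $LG$ by $RG$. I expect the genuine difficulty to lie precisely in this tracing step: one must classify how the colliding edge sits inside the existing region structure (whether it is the initial edge of a side or interior to one) and verify that the resulting walk in the left or right graph actually closes into a cycle rather than terminating at a root or a boundary edge. The positivity of $w$ and of all relators is what guarantees there is no alternative escape through cancellation, so that the only obstruction to folding is exactly a forbidden left or right cycle.
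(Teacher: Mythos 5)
Your strategy is the one the paper itself uses: note that the paper does not reprove Lemma \ref{1} (it imports it from \cite{AD}), but it proves the exactly analogous Lemma \ref{2} for the subgraphs $\Delta_n(r_w(i))$ by the same route you propose --- induction on the approximation step, reduction of folding to forward/backward determinism, and a cycle in $LG\langle X|R\rangle$ (dually $RG\langle X|R\rangle$) as the contradiction. The only place you diverge is the step you yourself flag as the difficulty, the ``tracing'' of the colliding first letters, and here your mechanism is not quite the right one: you propose to follow a forced chain of identifications until, by finiteness of $X$, a letter repeats. The paper closes this step more directly. At any vertex $\gamma_1$ of a deterministic approximate graph, the labels $a_1,\dots,a_n$ of the edges in $Star^o(\gamma_1)$ form the vertex set of a \emph{subtree} $T$ of $LG\langle X|R\rangle$: each out-edge beyond the one inherited from the linear graph was created by sewing on a side of a relation whose partner side already left $\gamma_1$, so each new label is joined by an $LG$-edge to a label already present, and the resulting connected subgraph is a tree because $LG$ is cycle-free. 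Now if the new side $s$ of a relation $(u,v)$ being sewn at $\gamma_1$ had first letter $a_{n+1}$ coinciding with some $a_j$, the $LG$-edge contributed by $(u,v)$ (joining the prefix letters of $u$ and $v$) would either be a loop (if $a_{n+1}=a_n$) or would connect two vertices of $T$ already joined by a path in $T$; either way $LG$ acquires a cycle, contradicting the Adian hypothesis. The contradiction is therefore immediate from the collision, not the endpoint of an unboundedly long walk, and there is no case analysis needed about the walk ``terminating at a root or boundary edge.'' The dual argument with $RG$ and suffix letters handles incoming edges. With that substitution your outline becomes the paper's proof.
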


For a finitely presented Adian inverse semigroup $Inv\langle X|R\rangle$ and a positive word $w\in X^+$ that contains an $R$-word $r$ as its subword, a birooted inverse word subgraph of $S\Gamma(w)$ can be generated by $r$ by using an iterative procedure similar to the Stephen's full $\mathscr{P}$-expansion. The word $w$ can contain the $R$-word $r$ only a finite number of times as its subword. Therefore, we can label each occurrence of $r$ by a number $i\in\mathbb{N}$, starting from the initial letter of $w$ and going along $w$ up to the terminal letter of $w$. So, $r_w(i)$ denotes the $i$-th occurrence of $r$ in $w$, and $\Delta(r_w(i))$ denotes the birooted inverse word subgraph of $S\Gamma(w)$ generated by the $i$-th occurrence of $r$ in $w$. We construct $\Delta(r_w(i))$ by applying elementary $\mathscr{P}$-expansions successively in the following manner. 

We construct the linear automaton of $w$, and denote it by $(\alpha_0,\Delta_0(r_w(i)), \beta_0)$, where the underlying graph is denoted by $\Delta_0(r_w(i))$, and $\alpha_0$, and $\beta_0$ are the initial and terminal vertices of the underlying linear graph.


 At the first step, we only apply the elementary $\mathscr{P}$-expansion on the $i$-th segment labeled by $r$ by sewing on a segment labeled by $s$ from the initial vertex of the segment $r$ to the terminal vertex of the segment $r$, for some $(r,s)\in R$. We denote the resulting graph by $(\alpha_1,\Delta_1(r_w(i)), \beta_1)$, where the underlying graph is denoted by $\Delta_1(r_w(i))$, and $\alpha_1$, and $\beta_1$ are the initial and terminal vertices of the underlying graph. This creates a first generation region whose one side is labeled by $r$, and it lies on the linear automaton of $w$. In $(\alpha_1,\Delta_1(r_w(i)), \beta_1)$, if we cannot find any unsaturated segment labeled by some $R$-word that either starts from or terminates at an interior vertex of the segment $s$ that was sewn on at the first step, then we cannot continue further.  In this case, the graph $\Delta_1(r_w(i))$ is the subgraph of $S\Gamma(w)$ generated by the $i$-th occurrence of the $R$-word $r$.  Otherwise, there will be some unsaturated segments labeled by some $R$-words $r_j$, for $1\leq j\leq n_1$, that either start from or terminate at an interior vertex of the segment $s$ that was sewn on at the first step. These new unsaturated segments share an edge with the segment $s$ that was sewn on at the first step. 

 At the second step, we only apply elementary $\mathscr{P}$-expansion on all these new unsaturated segments labeled by  $r_j$'s by sewing on segments labeled by $s_j$'s from the initial vertices to the terminal vertices of the corresponding segments $r_i$'s, where $\{(r_j,s_j), 1\leq j\leq n_1\}\subseteq R$. This step creates the second generation regions of $\Delta(r_w(i))$. We denote the resulting graph by $(\alpha_2,\Delta_2(r_w(i)), \beta_2)$, where the underlying graph is denoted by $\Delta_2(r_w(i))$, and $\alpha_2$, and $\beta_2$ are the initial and terminal vertices of the underlying graph.  All these second generation regions share an edge with the first generation region created at the first step. If we cannot find any unsaturated segment labeled by an $R$-word that either starts from an interior vertex of the segment $s_j$ or terminates an in interior vertex of the segment $s_j$, for some $j\in\{1,2,...,n_1\}$, where the segments $s_j$'s were sewn on at the second step, then we cannot proceed further. In this case, the graph $\Delta_2(r_w(i))$ is the subgraph of $S\Gamma(w)$ generated by the $i$-th occurrence of the $R$-word $r$. Otherwise, there will be some unsaturated segments labeled by some $R$-words $u_j$'s, for $1\leq j\leq n_2$, that either start from an interior vertex of the segment $s_j$ or terminate at an interior vertex of $s_j$, for some $j\in\{1,2,...,n_1\}$, where the segments $s_j$'s were sewn on at the second step. These new unsaturated segments $u_j$'s share an edge with some of the segments $s_j$'s that were sewn on at the second step. 

 At the third step, we only apply elementary $\mathscr{P}$-expansion on all these new unsaturated segments $u_j$'s by sewing on segments labeled by $v_j$'s from the initial vertices to the terminal vertices of the corresponding segments $u_j$'s, where $\{(u_j,v_j), 1\leq j\leq n_2\}\subseteq R$. We denote the resulting graph by $(\alpha_3,\Delta_3(r_w(i)), \beta_3)$, where the underlying graph is denoted by $\Delta_3(r_w(i))$, and $\alpha_3$, and $\beta_3$ are the initial and terminal vertices of the underlying graph. This step forms all the third generation regions of $\Delta(r_w(i))$. Note that, all the third generation regions share an edge with a second generation region. If we cannot find any unsaturated segment labeled by an $R$-word that either starts from or terminates an in interior vertex of the segment $v_j$, for some $j\in\{1,2,...,n_2\}$, where the segments $v_j$'s were sewn on at the third step, then we cannot proceed further. In this case, the graph $\Delta_3(r_w(i))$ is the subgraph of $S\Gamma(w)$ generated by the $i$-th occurrence of the $R$-word $r$. Otherwise,  there will be some unsaturated segments labeled by some $R$-words that either start from or terminate at an in interior vertex of the segment $v_j$, for some $j\in\{1,2,...,n_2\}$, where the segments $v_j$'s were sewn on at the third step. So, we continue the process of successive applications of elementary $\mathscr{P}$-expansions in this manner,  and we obtain a sequence of approximate graphs $\{(\alpha_n,\Delta_n(r_w(i)),\beta_n):n\in\mathbb{N}\}$  that converges to the subgraph $\Delta(r_w(i))$ of $S\Gamma(w)$. If the sequence $\{(\alpha_n,\Delta_n(r_w(i)),\beta_n):n\in\mathbb{N}\}$ stabilizes after a finite number of terms, then $\Delta(r_w(i))$ is finite. Otherwise, $\Delta(r_w(i))$ is infinite. 

The construction of $\Delta(r_w(i))$ can be summarized in the following way: 

\begin{enumerate}

\item We construct the linear automaton of $w$, and denote it by $(\alpha_0,\Delta_0(r_w$ $(i)), \beta_0)$, where the underlying graph is denoted by $\Delta_0(r_w(i))$, and $\alpha_0$, and $\beta_0$ are the initial and terminal vertices of the underlying graph.

\item We only apply the elementary $\mathscr{P}$-expansion on the $i$-th segment labeled by $r$. We denote the resulting graph by $(\alpha_1,\Delta_1(r_w(i)), \beta_1)$, where the underlying graph is denoted by $\Delta_1(r_w(i))$, and $\alpha_1$, and $\beta_1$ are the initial and terminal vertices of the underlying graph.
. 

\item For $n\geq 1$, to obtain $(\alpha_{n+1},\Delta_{n+1}(r_w(i)),\beta_{n+1})$ from $(\alpha_n,\Delta_n(r_w(i)),$ $\beta_n)$, we only apply elementary $\mathscr{P}$-expansion on those unsaturated segments labeled by some $R$-words of $\Delta_n(r_w(i))$ that either start from or terminate at an interior vertex of the segments labeled by some $R$-words and were sewn on  at the $n$-th iterative step.

\end{enumerate}
Consequently, we obtain a sequence of approximate graphs $\{(\alpha_n,\Delta_n(r_w$ $(i)), \beta_n):n\in\mathbb{N}\}$  that converges to the subgraph $\Delta(r_w(i))$ of $S\Gamma(w)$. 

\begin{Ex}
For instance, we consider the inverse semigroup $Inv\langle a,b|ab=ba\rangle$ and the word $w\equiv a^2b^2a^2b^2\in\{a,b\}^+$. The word $w$ contains both the $R$-words $ab$ and $ba$ as its subwords. The $R$-word $ab$ occurs twice and the $R$-word $ba$ occurs once in $w$. We construct the subgraph of $S\Gamma(w)$ generated by the first occurrence of the $R$-word $ab$. 

First, we construct the linear automaton of $w$, $(\alpha_0, \Delta_0(ab_w(1)),\beta_0)$ (see figure \ref{5a}). We apply the elementary $\mathscr{P}$-expansion only at the first subsegment labeled by $ab$ of $w$ by sewing on a path labeled by $ba$ from the initial vertex of $ab$ to the terminal vertex of $ab$, and we denote the resulting graph by $(\alpha_1, \Delta_1(ab_w(1)),\beta_1)$ (see figure \ref{5b}). In $\Delta_1(ab_w(1))$, we can find two unsaturated segments labeled by $ab$, one of which terminates at an interior vertex of the segment $ba$, and the other one starts from an interior vertex of the segment $ba$, where the segment labeled by $ba$ was sewn at the previous step. We apply elementary $\mathscr{P}$-expansion only on both of these unsaturated segments by sewing on segments labeled by $ba$ from the initial vertices to the terminal vertices of these unsaturated segments. We denote the resulting graph by  $(\alpha_2, \Delta_2(ab_w(1)),\beta_2)$ (see figure \ref{5c}). In  $\Delta_2(ab_w(1)$, we can find only one unsaturated segment labeled by $ab$ that starts from an interior vertex of a segment labeled by $ba$ that was sewn on at the previous step. So, we apply elementary $\mathscr{P}$-expansion only on this unsaturated segment labeled by sewing on a path labeled by $ba$ from the initial vertex to the terminal vertex of the segment labeled by $ab$. We denote the resulting graph by $(\alpha_3, \Delta_3(ab_w(1)),\beta_3)$ (see figure \ref{5d}). In $\Delta_3(ab_w(1))$, we cannot find any unsaturated segments labeled by an $R$-word that either starts from or terminates at an interior vertex of the segment $ba$ that was sewn on at the previous step. Therefore, we cannot continue further. Hence $\Delta_3(ab_w(1))$ is the subgraph of $S\Gamma(w)$ generated by the first occurrence of the $R$-word $ab$ in $w$, which is denoted by $\Delta(ab_w(1))$.
 \begin{figure}[h!]
\centering
\includegraphics[trim = 0mm 0mm 0mm 0mm, clip,width=2.8in]{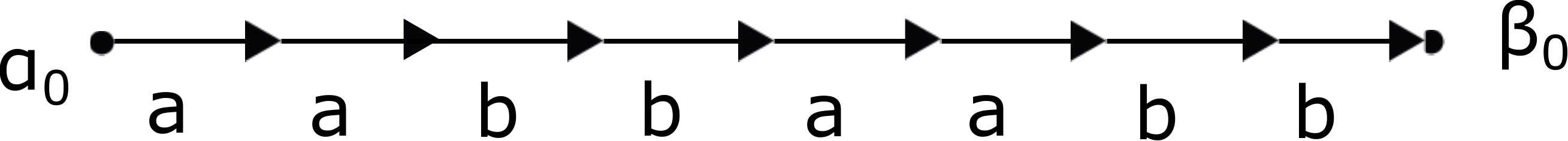}
\caption{$(\alpha_0, \Delta_0(ab_w(1)),\beta_0)$}
\label{5a}
\end{figure}

 \begin{figure}[h!]
\centering
\includegraphics[trim = 0mm 0mm 0mm 0mm, clip,width=2.8in]{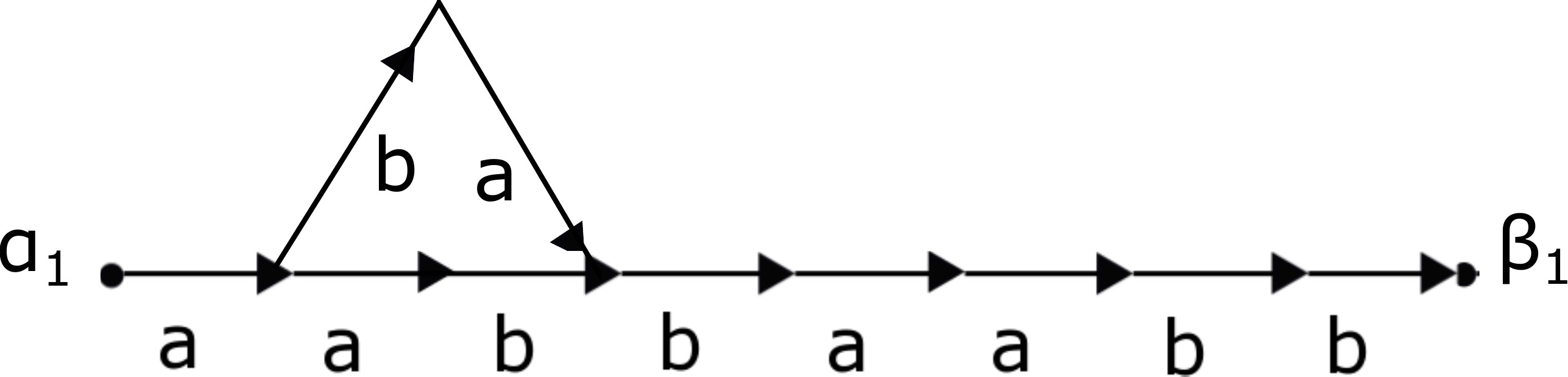}
\caption{$(\alpha_1, \Delta_1(ab_w(1)),\beta_1)$}
\label{5b}
\end{figure}

 \begin{figure}[h!]
\centering
\includegraphics[trim = 0mm 0mm 0mm 0mm, clip,width=2.8in]{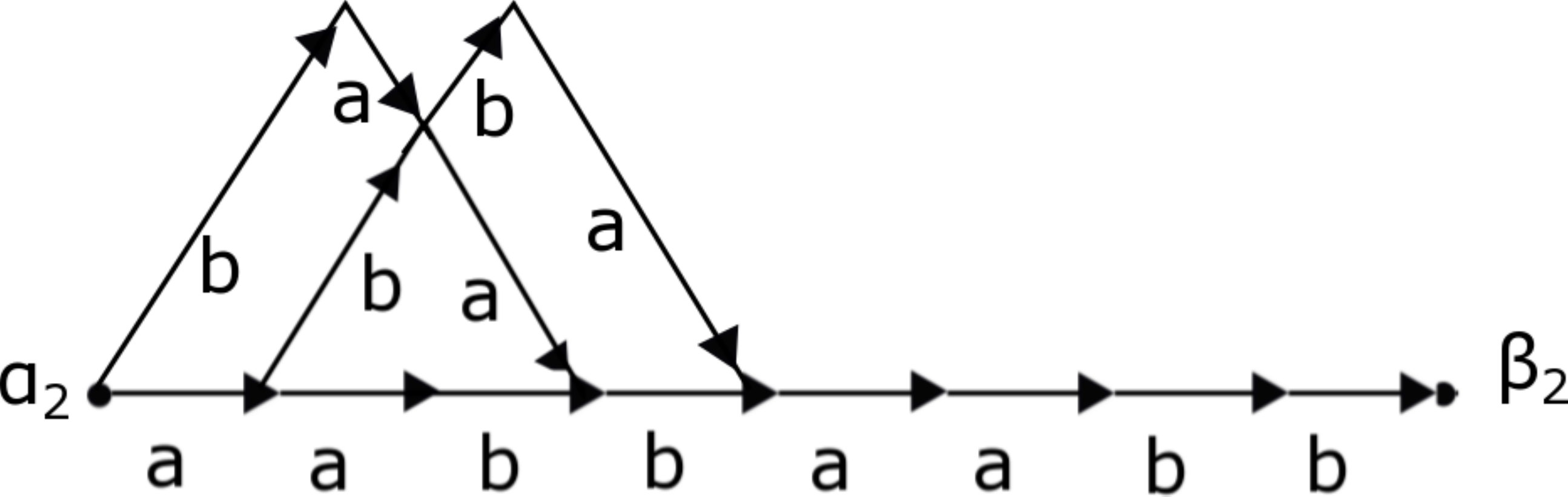}
\caption{$(\alpha_2, \Delta_2(ab_w(1)),\beta_2)$}
\label{5c}
\end{figure}

 \begin{figure}[h!]
\centering
\includegraphics[trim = 0mm 0mm 0mm 0mm, clip,width=2.8in]{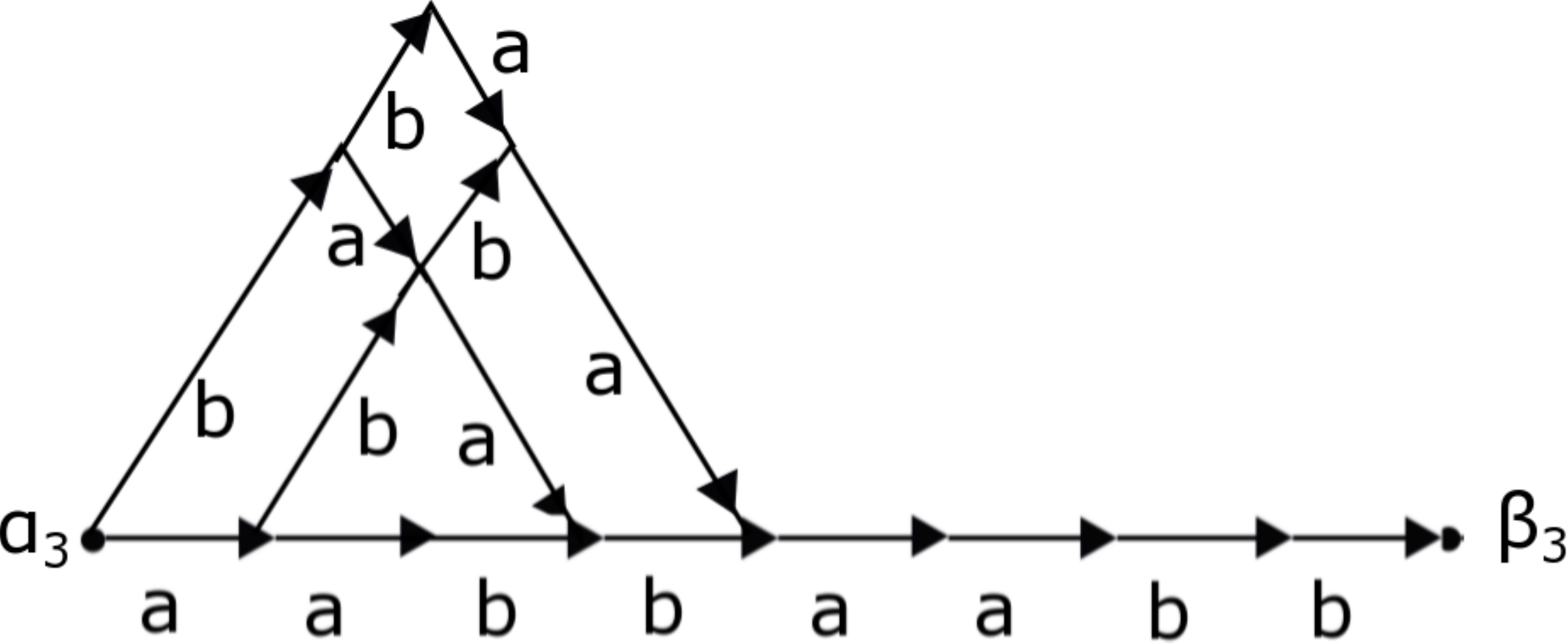}
\caption{$(\alpha_3, \Delta_3(ab_w(1)),\beta_3)$}
\label{5d}
\end{figure}

Similarly, we can construct $\Delta(ab_w(2))$  (see figure \ref{6a}) and $\Delta(ba_w(1))$  (see figure \ref{7a}). 

 \begin{figure}[h!]
\centering
\includegraphics[trim = 0mm 0mm 0mm 0mm, clip,width=2.8in]{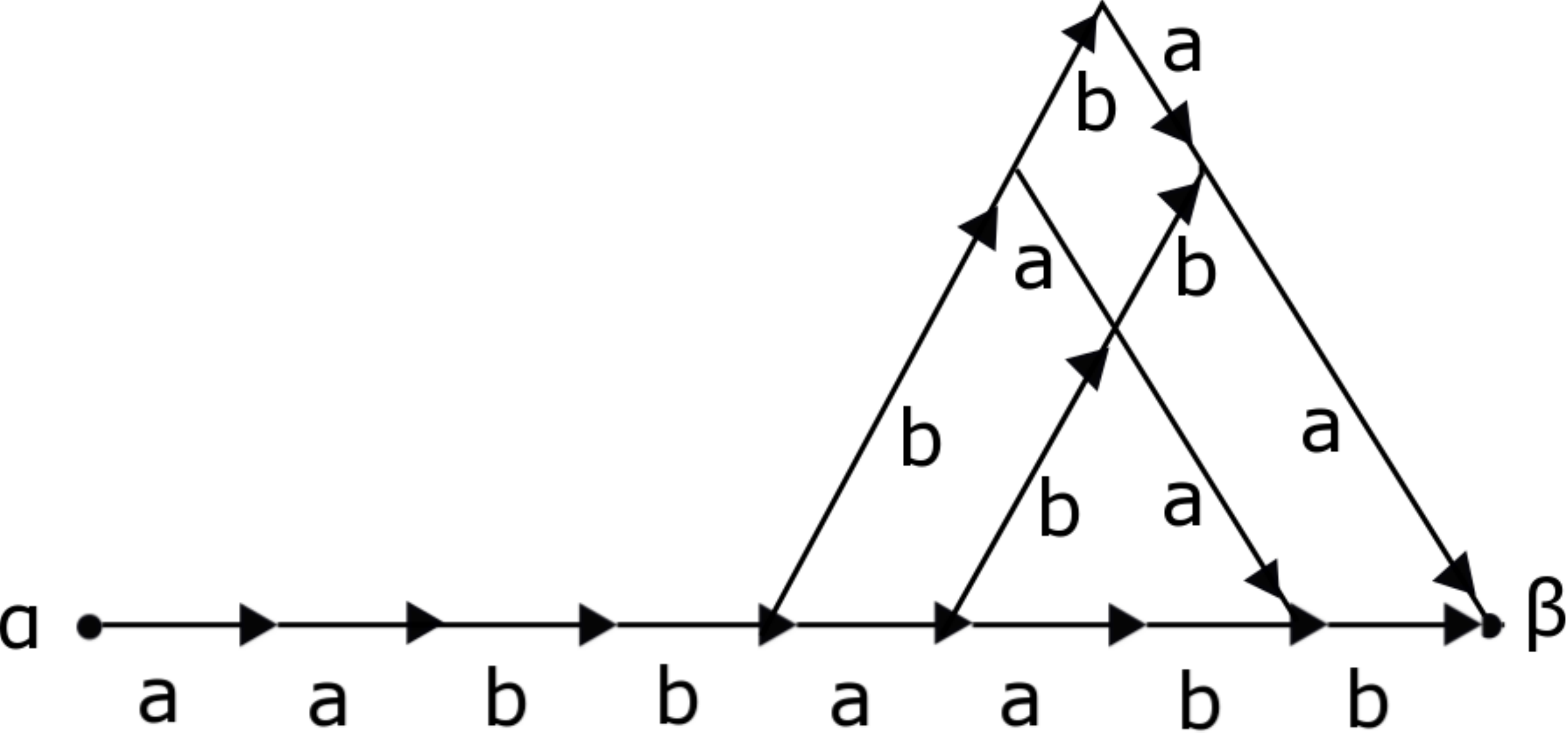}
\caption{$\Delta(ab_w(2))$}
\label{6a}
\end{figure}

 \begin{figure}[h!]
\centering
\includegraphics[trim = 0mm 0mm 0mm 0mm, clip,width=2.8in]{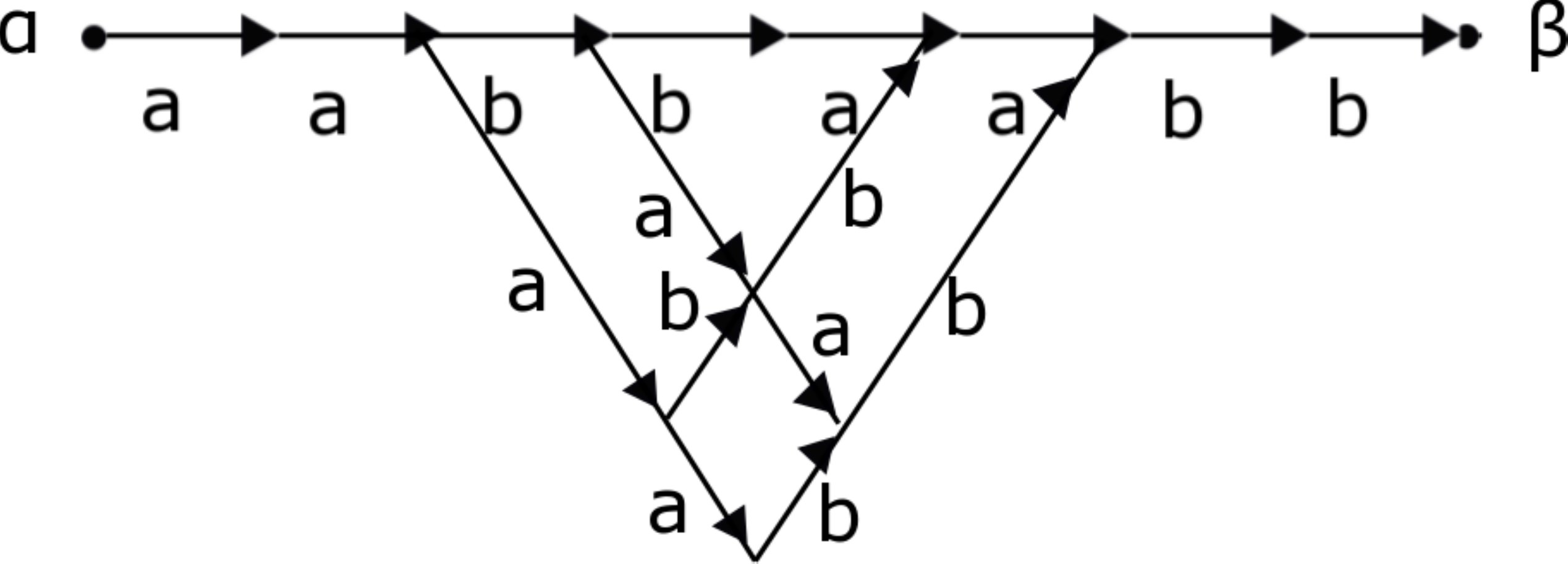}
\caption{$\Delta(ba_w(1))$}
\label{7a}
\end{figure}

The complete graph  of $S\Gamma(w)$ can be constructed by five successive applications of Stephen's full $\mathscr{P}$-expansion (see figure \ref{8a}).

 \begin{figure}[h!]
\centering
\includegraphics[trim = 0mm 0mm 0mm 0mm, clip,width=2.8in]{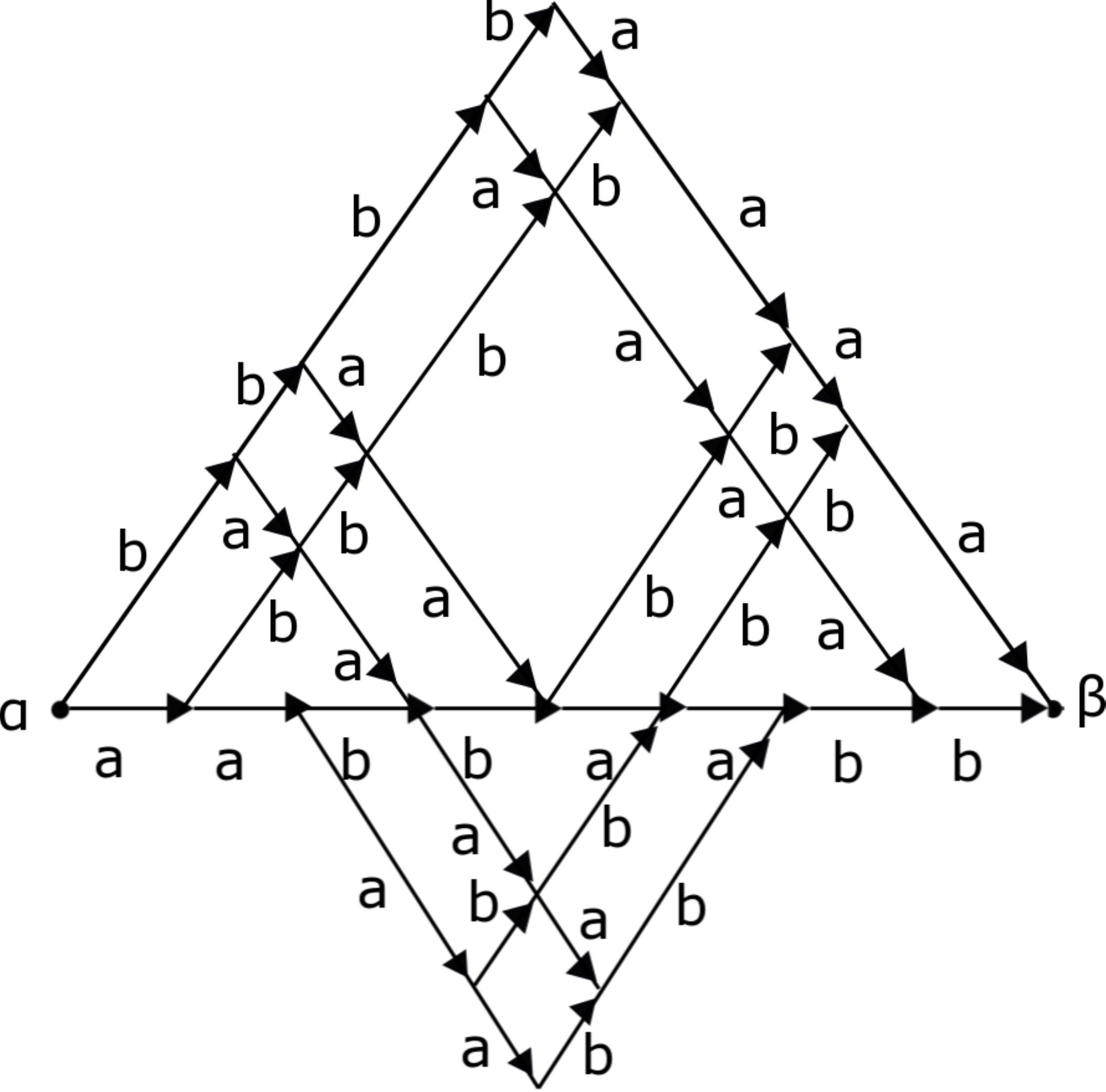}
\caption{$S\Gamma(w)$}
\label{8a}
\end{figure}

\end{Ex}

The next Lemma and Proposition show that for a finitely presented Adian inverse semigroup $Inv\langle X|R\rangle$, and a positive word $w\in X^+$ that contains an $R$-word $r$ as its subword, $\Delta(r_w(i))$ inherits  most of the structural properties of $S\Gamma(w)$, for all $i\in I$. 

The set of all edges of a graph whose tail vertex lies at a vertex $\gamma$ is denoted by $Star^o(\gamma)$, and all edges whose tip lies at a vertex $\gamma$ is denoted by $Star^i(\gamma)$. 

\begin{lemma}\label{2} Let $M=\langle X|R\rangle$ be a finitely presented Adian inverse semigroup, and $w\in X^+$ be a positive word that contains an $R$-word $r$ as its subword. The birooted inverse word graph $(\alpha_n,\Delta_n(r_w(i)),\beta_n)$ is deterministic for all $n\in\mathbb{N}$ and $i\in I$. 
\end{lemma}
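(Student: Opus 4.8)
The plan is to compare the restricted expansion process that produces the graphs $\Delta_n(r_w(i))$ with Stephen's full $\mathscr{P}$-expansion process that produces the approximate graphs $\Gamma_n(w)$ of $S\Gamma(w)$, and to transport determinism from the latter to the former. Fix the $R$-word $r$ and the occurrence index $i$. I would argue by induction on $n$ that there is a label-preserving graph embedding $\iota_n\colon \Delta_n(r_w(i))\hookrightarrow \Gamma_n(w)$ carrying basepoints to basepoints. This is the key device, because an embedding into a deterministic graph has deterministic domain: two distinct, identically labelled edges leaving a vertex of $\Delta_n(r_w(i))$ would be carried by $\iota_n$ to two distinct, identically labelled edges leaving a vertex of $\Gamma_n(w)$, contradicting determinism of $\Gamma_n(w)$. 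Thus the lemma reduces to establishing the embeddings $\iota_n$ together with the determinism of each $\Gamma_n(w)$.

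First I would dispose of the base case and of the purely local sources of folding. Both processes start from the same object, the linear automaton of $w$, so $\iota_0$ is the identity; this graph is deterministic because $w\in X^+$ is positive, so at each vertex the unique outgoing positive edge carries a letter of $X$ while the unique outgoing inverse edge carries a letter of $X^{-1}$, and no label is repeated. The same observation handles every segment sewn on at a later stage: each such segment is the linear graph of one side of a relation, hence of a positive word in $X^+$, so its interior vertices, which are always fresh, never fold among themselves. Hence non-determinism, if it arose at all, could only arise where a freshly sewn segment meets the previously built graph.

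Next I would run the inductive step for the embeddings. By Lemma \ref{1}, no two edges ever fold together while building $S\Gamma(w)$; equivalently, in the full process each $\Gamma_{n+1}(w)$ is obtained from $\Gamma_n(w)$ by sewing on completion segments with no determinization, and, as recorded in Proposition 3 of \cite{AD}, $\Gamma_n(w)$ embeds in $\Gamma_{n+1}(w)$. By construction the restricted process applies at step $n+1$ only elementary $\mathscr{P}$-expansions to unsaturated $R$-word segments that already lie in $\Delta_n(r_w(i))$, hence, via $\iota_n$, in $\Gamma_n(w)$. Each such completion is therefore present in $\Gamma_{n+1}(w)$ — either it was sewn at an earlier stage of the full process, or it is sewn at stage $n+1$ — and in every case it is attached at the $\iota_n$-images of the relevant endpoints, with fresh interior vertices on both sides. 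Using that $\Gamma_{n+1}(w)$ needs no folding, these attachments are compatible, and $\iota_n$ extends to a label-preserving embedding $\iota_{n+1}\colon \Delta_{n+1}(r_w(i))\hookrightarrow \Gamma_{n+1}(w)$; injectivity is preserved because the only vertices that could be identified are interior vertices of the freshly sewn segments, and these remain distinct in $\Gamma_{n+1}(w)$ precisely by Lemma \ref{1}.

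Finally I would conclude. Each $\Gamma_n(w)$ is deterministic, being an approximate graph in Stephen's process for a positive word over an Adian presentation, to which, by Lemma \ref{1}, no folding is applicable. Combined with the embeddings $\iota_n$, this shows that every $(\alpha_n,\Delta_n(r_w(i)),\beta_n)$ is deterministic, for all $n\in\mathbb{N}$ and all occurrences $i$, as required. I expect the main obstacle to be the inductive maintenance of $\iota_n$: one must check carefully that selecting only a subset of the admissible elementary $\mathscr{P}$-expansions at each step, and in an order different from the full process, still lands inside the full approximate graph $\Gamma_n(w)$, and that the absence of folding supplied by Lemma \ref{1} is exactly what prevents these partial attachments from being collapsed or glued inconsistently.
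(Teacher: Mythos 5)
Your strategy --- transporting determinism from the full approximate graphs $\Gamma_n(w)$ through embeddings $\iota_n\colon\Delta_n(r_w(i))\hookrightarrow\Gamma_n(w)$ --- is genuinely different from the paper's, and it has a gap at exactly the point you flag as the ``main obstacle.'' The reduction only works if $\iota_{n+1}$ is injective on the star of each vertex: if two distinct, identically labelled edges of $\Delta_{n+1}(r_w(i))$ left a common attachment vertex $\gamma_1$, a mere morphism into the deterministic graph $\Gamma_{n+1}(w)$ would send them to the \emph{same} edge, and no contradiction results. So injectivity of $\iota_{n+1}$ at the endpoints of a freshly sewn segment is not a technical side condition; it is essentially equivalent to the determinism you are trying to prove. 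Your justification (``fresh interior vertices \dots remain distinct by Lemma \ref{1}'') addresses only the interiors of the sewn segments, where determinism is automatic because each such segment is the linear graph of a positive word. Moreover, in the case you explicitly allow --- the completion of $\iota_n(r_j)$ ``was sewn at an earlier stage of the full process'' --- Lemma \ref{1} does not help: the unique $s_j$-labelled path in $\Gamma_{n+1}(w)$ from $\iota_n(\gamma_1)$ need not have been sewn on as a single segment, so its interior vertices need not be fresh relative to $\iota_n(\Delta_n(r_w(i)))$ or to the other completions attached at step $n+1$. As written, the induction does not close.

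The paper instead gives a direct local argument in which the Adian hypothesis does the work explicitly: assuming $\Delta_n(r_w(i))$ deterministic, the labels of the outgoing edges at $\gamma_1$ form the vertices of a subtree of the left graph $LG\langle X|R\rangle$, and if the prefix letter of the newly sewn word coincided with one of them, the edge of $LG\langle X|R\rangle$ contributed by the relation $(u,v)$ would close a cycle, contradicting cycle-freeness; the dual argument with $RG\langle X|R\rangle$ handles incoming edges at $\gamma_2$. Note also that the paper establishes the embeddings you want ($\theta_n$ and $\phi_n$ in Proposition \ref{3}) only \emph{after} Lemma \ref{2} is available, and uses Lemma \ref{2} to do so --- further indication that your proposed logical order runs the hard direction first. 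To salvage your approach you would need an independent argument that attaching each completion segment creates no parallel identically labelled edge at $\gamma_1$ or $\gamma_2$, and the natural such argument is precisely the left/right-graph one above.
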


\begin{proof} We use induction on the number of iterative steps involved in the construction of  $(\alpha_n,\Delta_n(r_w(i)),\beta_n)$. 

As the base case for induction, we begin with the linear automaton of $w$, $(\alpha_0,\Delta_0(r_w(i)),\beta_0)$. Because $w\in X^+$ is a positive word, the linear automaton has no two oppositely oriented consecutive  edges. So, the linear automaton of $w$ is deterministic. 

Now we suppose that $(\alpha_n,\Delta_n(r_w(i)),\beta_n)$, for some $n\geq 1$, is deterministic. Suppose that $(u,v)\in R$ is a relation such that $v$ labels a segment from a vertex $\gamma_1$ to a vertex $\gamma_2$ of $(\alpha_n,\Delta_n(r_w(i)),\beta_n)$, but $u$ does not label a segment from $\gamma_1$ to $\gamma_2$, where $\gamma_1$ is an interior vertex of a segment  labeled by an $R$-word that was sewed on at the $n$-th iterative step of the construction of $\Delta(r_w(i))$. Let $a_1,a_2,...,a_n$ be the labels of the edges in $Star^o(\gamma_1)$. Since $(\alpha_n,\Delta_n(r_w(i)),\beta_n)$ is deterministic, $a_1,a_2,...,a_n$ are the vertices of a subtree $T$ of $LG\langle X|R\rangle$. We assume that the prefix letter of $u$ is $a_{n+1}$ and the prefix letter of $v$ is $a_n$. Then $a_{n}$ and $a_{n+1}$ cannot be the same letters because there exists an edge between $a_{n}$ and $a_{n+1}$, corresponding to the relation $(u,v)$ in $LG\langle X|R\rangle$. So, if $a_n$ and $a_{n+1}$ are the same letters then there will be a cycle in $LG\langle X|R\rangle$, which is a contradiction. The letter $a_{n+1}$ cannot be the same as $a_j$, for some $ j\in\{1,2,...,{n-1}\}$. Otherwise, there exists a path between $a_n$ and $a_{n+1}$ in $T$. Also, there exists an edge in $LG\langle X|R\rangle$ between $a_n$ and $a_{n+1}$, corresponding to the relation $(u,v)$, which is not included in $T$. This implies that there is a cycle in $LG\langle X|R\rangle$, which is a contradiction. Hence, sewing on a segment labeled by $u$ from $\gamma_1$ to $\gamma_2$ will not produce two edges with the same label in $Star^o(\gamma_1)$. A dual argument, using $RG\langle X|R\rangle$, holds for incoming edges at an interior vertex of a segment labeled by an $R$-word, and sewed on at the $n$-th iterative step of the construction of $\Delta(r_w(i))$.

\end{proof} 

For an Adian inverse semigroup $Inv\langle X|R\rangle$ and a positive word $w\in X^+$ that contains an $R$-word $r$ as its subword, there exist graph homomorphisms $\theta_n:(\alpha_n,\Delta_n(r_w(i)),\beta_n)\to (\alpha_n,\Gamma_n(w),\beta_n)$, such that $\theta_n(\alpha_n)=\alpha_n$ and $\theta_n(\beta_n)=\beta_n$, for all $n\in \mathbb{N}$, and $i\in I$. Also, there exist graph homomorphisms $\phi_n:(\alpha_n,\Delta_n(r_w(i)),\beta_n)\to (\alpha_{n+1},\Delta_{n+1}(r_w(i)),\beta_{n+1})$ such that $\phi_n(\alpha_n)=\alpha_{n+1}$ and $\phi_n(\beta_n)=\beta_{n+1}$, for all $n\in \mathbb{N}$, and $i\in I$. 

\begin{proposition}\label{3} 
Let $M=Inv\langle x|R\rangle$ be an Adian inverse semigroup and $w\in X^+$ be a positive word that contains an $R$-word $r$ as its subword. Then, for all $i\in I$:

$(i).$ $\theta_n:(\alpha_n,\Delta_n(r_w(i)),\beta_n)\to (\alpha_n,\Gamma_n(w),\beta_n)$ is an embedding for all $n\in \mathbb{N}$. 

$(ii).$ $\phi_n:(\alpha_n,\Delta_n(r_w(i)),\beta_n)\to (\alpha_{n+1},\Delta_{n+1}(r_w(i)),\beta_{n+1})$ is an embedding for all $n\in \mathbb{N}$. 

$(iii).$ $\Delta(r_w(i))$ has exactly one source vertex $\alpha$ and one sink vertex $\beta$, where $(\alpha,\Delta(r_w(i)),\beta)$ is the subgraph of $S\Gamma(w)$ generated by the $i$-th occurrence of $r$ in $w$.

$(iv)$. For every vertex $\gamma\neq \alpha$ in $(\alpha,\Delta(r_w(i)),\beta)$ there exists a positively labeled path in $\Delta(r_w(i))$ from $\alpha$ to $\gamma$. For every vertex $\gamma\neq\beta$ there exists a positively labeled path in $\Delta(r_w(i))$ from $\gamma$ to $\beta$. 

$(v).$ Every positively labeled path in $\Delta(r_w(i))$ can be extended to a positively labeled transversal from $\alpha$ to $\beta$. 

\end{proposition}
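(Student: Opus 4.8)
The plan is to treat parts $(i)$ and $(ii)$ as the structural backbone and to derive $(iii)$--$(v)$ from the positivity of the sewn-on segments. Everything rests on two facts already in hand: Lemma \ref{1}, which guarantees that \emph{no} folding ever occurs in Stephen's construction for a positive word (hence none occurs in the restricted construction of the graphs $\Delta_n(r_w(i))$, since these are obtained by a sub-collection of the same elementary $\mathscr{P}$-expansions), and Lemma \ref{2}, which gives determinism of every approximation $(\alpha_n,\Delta_n(r_w(i)),\beta_n)$. I will also use Proposition 3 of \cite{AD}, which gives that $\Gamma_n(w)$ embeds in $\Gamma_{n+1}(w)$.

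For $(i)$ I would induct on $n$. The base case is immediate: $\Delta_0(r_w(i))$ and $\Gamma_0(w)$ are both the linear automaton of $w$, so $\theta_0$ is the identity. For the inductive step, observe that the elementary $\mathscr{P}$-expansions performed in passing from $\Delta_n$ to $\Delta_{n+1}$ form a sub-collection of those performed by the full $\mathscr{P}$-expansion taking $\Gamma_n$ to $\Gamma_{n+1}$: each segment $r'$ expanded in $\Delta_n$ is, via the embedding $\theta_n$, readable in $\Gamma_n$, so its completion $s'$ is either already present in $\Gamma_{n+1}$ or is sewn on by the full expansion. By Lemma \ref{1} the sewing-on of $s'$ in $\Delta$ introduces only fresh interior vertices, and by Lemma \ref{2} the word $s'$ labels a \emph{unique} path out of $\theta_n(\gamma_1)$ in the deterministic graph $\Gamma_{n+1}$; mapping the new interior vertices of $\Delta_{n+1}$ along this path extends $\theta_n$ to $\theta_{n+1}$, and injectivity is preserved because, no folding ever taking place, distinct expansions create disjoint sets of fresh vertices on both sides. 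Part $(ii)$ then follows formally: the maps fit into a commuting square $\theta_{n+1}\circ\phi_n=\iota_n\circ\theta_n$, where $\iota_n\colon\Gamma_n(w)\hookrightarrow\Gamma_{n+1}(w)$ is the embedding of Proposition 3 of \cite{AD}, the square commuting because all four maps are the obvious structural inclusions agreeing on the common subgraph $\Delta_n(r_w(i))$. Since $\iota_n\circ\theta_n$ is a composite of embeddings it is injective, whence $\phi_n$ is injective and, being a morphism of deterministic graphs onto a sub-approximation, an embedding.

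For the remaining parts I argue directly from the construction, using that every sewn-on segment is labeled by a \emph{positive} word. I prove $(iv)$ by induction on $n$: in the linear automaton every vertex lies on the positive path $\alpha_0\to\beta_0$, and when a positive segment $s'$ is sewn from $\gamma_1$ to $\gamma_2$, the inductive hypothesis supplies positive paths $\alpha\to\gamma_1$ and $\gamma_2\to\beta$, so each new interior vertex $\delta$ of $s'$ enjoys positive paths $\alpha\to\gamma_1\to\delta$ and $\delta\to\gamma_2\to\beta$; passing to the limit gives $(iv)$ for $\Delta(r_w(i))$. Part $(v)$ is then immediate: a positive path $\gamma\to\delta$ extends to the positive transversal $\alpha\to\gamma\to\delta\to\beta$ using $(iv)$ at both ends. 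Finally, $(iii)$ follows from $(iv)$: every vertex $\gamma\neq\alpha$ has an incoming positive edge (the last edge of a positive path $\alpha\to\gamma$ of length $\geq 1$) and so is not a source, dually for sinks; that $\alpha$ itself admits no incoming positive edge is because such an edge would, together with $(iv)$, close up a positive cycle, which is impossible since $\Delta(r_w(i))$ embeds (by $(i)$) in $S\Gamma(w)$ and a nonempty positive word is never trivial in the Adian group into which the Adian semigroup embeds.

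I expect the main obstacle to be the injectivity claim inside $(i)$: one must rule out that a segment sewn on in the sparse construction of $\Delta$ is identified with pre-existing structure already created in the richer graph $\Gamma_n(w)$ in a way that collapses two distinct vertices of $\Delta_{n+1}$. The decisive leverage is Lemma \ref{1}: because folding never occurs anywhere in the construction of $S\Gamma(w)$, every elementary $\mathscr{P}$-expansion attaches genuinely fresh interior vertices on \emph{both} sides, so the sub-collection of expansions defining $\Delta$ carves out an honest subgraph; determinism (Lemma \ref{2}) together with the absence of positive cycles then guarantees that the induced vertex correspondence is well defined and one-to-one.
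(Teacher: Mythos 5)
Your proposal is correct and, for parts $(i)$, $(iv)$ and $(v)$, follows essentially the same route as the paper: $(i)$ by induction using determinism of the $\Gamma_n(w)$ (Lemma \ref{1}) to see that the vertex correspondence stays injective, $(iv)$ by induction on the first approximation $\Delta_n(r_w(i))$ containing the vertex, and $(v)$ by concatenation. You differ on $(ii)$ and $(iii)$. For $(ii)$ the paper argues directly from Lemma \ref{2}: since each $\Delta_n(r_w(i))$ is deterministic, no folding occurs in passing to $\Delta_{n+1}(r_w(i))$, so distinct vertices stay distinct; your commuting-square derivation $\theta_{n+1}\circ\phi_n=\iota_n\circ\theta_n$ is a valid alternative but routes through $(i)$ and the embedding $\Gamma_n(w)\hookrightarrow\Gamma_{n+1}(w)$ of \cite{AD}, which is more machinery than needed. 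For $(iii)$ the paper runs a separate induction showing that sewing on positively labeled segments without folding preserves the unique source and sink at each stage, whereas you deduce it from $(iv)$ together with the absence of positively labeled cycles; that deduction is clean and arguably tidier, but note that the acyclicity you invoke is not free --- it is Lemma 2 of \cite{AD} (cited elsewhere in this paper), and your one-line justification via ``a nonempty positive word is never trivial in the Adian group'' is the standard reason behind that lemma rather than a self-contained argument, so you should cite it explicitly. With that citation supplied, the proof is complete and of comparable rigor to the paper's.
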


\begin{proof}$(i)$. Every vertex of the approximate graph $(\alpha_n,\Delta_n(r_w(i)),\beta_n)$ gets mapped to a vertex of the approximate graph $(\alpha_n,\Gamma_n(w),\beta_n)$ with the same label under $\theta_n$, for all $n\in\mathbb{N}$. It follows from Lemma \ref{1} that the approximate graphs $(\alpha_n,\Gamma_n(w),\beta_n)$ are deterministic, for all $n\in \mathbb{N}$. Therefore, the images of two distinct vertices of  $(\alpha_n,\Delta_n(r_w(i)),\beta_n)$ remain distinct under the map $\theta_n$, for all $n\in\mathbb{N}$. 

$(ii)$. From Lemma \ref{2} the approximate graphs $(\alpha_n,\Delta_n(r_w(i)),\beta_n)$ are deterministic, for all $n\in\mathbb{N}$.  So, no folding occurs in the construction of $(\alpha_{n+1},\Delta_{n+1}(r_w(i)),\beta_{n+1})$ from $(\alpha_n,\Delta_n(r_w(i)),\beta_n)$. The images of two distinct vertices of the approximate graph $(\alpha_n,\Delta_n(r_w(i)),\beta_n)$ remain distinct under the map $\phi_n$, for all $n\in\mathbb{N}$. 

$(iii)$. From $(ii)$, the sequence of approximate graphs $\{(\alpha_n,\Delta_n(r_w(i)),\beta_n)$: $n\in \mathbb{N}\}$ converges to $(\alpha,\Delta(r_w(i)),\beta)$ such that $\phi_n(\alpha_n)=\alpha_{n+1}=\alpha$, and $\phi_n(\beta_n)=\beta_{n+1}=\beta$ for all $n\in \mathbb{N}$. We use induction to show that $\alpha$ and $\beta$ are the unique (distinct) source and sink vertices of $(\alpha,\Delta_n(r_w(i)),\beta)$ for all $n\in\mathbb{N}$. 

Notice that $\alpha$ and $\beta$ are the unique (distinct) source and skin vertices of the linear automaton $(\alpha,\Delta_0(r_w(i)),\beta)$.  As our inductive hypothesis, we assume that $\alpha$ and $\beta$ are the unique (distinct) source and sink vertices of $(\alpha, \Delta_n(r_w(i)),\beta)$, for some $n\geq 1$. To obtain $(\alpha, \Delta_{n+1}(r_w(i)),\beta)$ from $(\alpha, \Delta_{n}(r_w(i)),\beta)$, we sew on segments labeled by the other sides of those relations whose one side label an unsaturated segment in the approximate graph $(\alpha,\Delta_n(r_w(i)),\beta)$ and do not perform any folding. So, $\alpha$ and $\beta$ remain unique (distinct) source and sink vertices of $(\alpha, \Delta_{n+1}(r_w(i)),\beta)$. Hence by induction, $\alpha$ and $\beta$ are the unique (distinct) source and sink vertices of $\Delta(r_w(i))$.

$(iv)$. From $(ii)$ and $(iii)$, the sequence of approximate graphs $\{(\alpha,\Delta_n(r_w$ $(i)),\beta): n\in \mathbb{N}\}$ converges to $(\alpha,\Delta(r_w(i)),\beta)$, and each approximate graph $(\alpha,\Delta_n(r_w(i)),\beta)$ can be viewed as a subgraph of $\Delta(r_w(i))$. Therefore, for every vertex $\gamma(\neq \alpha)$ of $\Delta(r_w(i))$ there is a least positive integer $n$ such that $\gamma$ lies in $(\alpha,\Delta_n(r_w(i)),\beta)$, but $\gamma$ does not lie in $(\alpha,\Delta_{n-1}(r_w(i)),\beta)$. We use induction on $n$ to prove the statement $(iv)$.

Let $\gamma(\neq\alpha)$ be a vertex of $\Delta(r_w(i))$. If $\gamma$ lies in $(\alpha,\Delta_0(r_w(i)),$ $\beta)$, then clearly there is a unique positively labeled path from $\alpha$ to $\gamma$ in $(\alpha,\Delta_0(r_w(i)),$ $\beta)$. As our inductive hypothesis, we assume that for every vertex $\delta(\neq\alpha)$ there exists a positively labeled path from $\alpha$ to $\delta$ in $(\alpha,\Delta_n$ $(r_w(i)),\beta)$, for some $n\geq 1$. Now, we assume that $\gamma$ lies in $(\alpha,\Delta_{n+1}(r_w(i)),$ $\beta)$. There are two possibilities. First, If $\gamma$ lies in  $\phi_n(\alpha,\Delta_n(r_w(i)),\beta)$, then by inductive hypothesis there is a positively labeled path from $\alpha$ to $\gamma$. Second, If $\gamma\in V\{(\alpha, \Delta_{n+1}(r_w(i)),\beta)\setminus \phi_n (\alpha, \Delta_n(r_w(i)),\beta)\}$, then $\gamma$ lies on a segment $l$ labeled by an $R$-word, that was sewn on to $(\alpha,\Delta_n(r_w(i)),\beta)$ at the $(n+1)$st iterative step. We denote the initial and terminal vertices of $l$ by $\gamma_0$ and $\gamma_1$. Clearly, $\gamma_0$ and $\gamma_1$ lie in $(\alpha,\Delta_n(r_w(i)),\beta)$. By inductive hypothesis, there is a positively labeled path $p$ from $\alpha$ to $\gamma_0$ in $(\alpha,\Delta_n(r_w(i)),\beta)$. Let $q$ denotes the positively labeled path from $\gamma_0$ to $\gamma$ along $l$. The concatenation of paths $pq$ is a positively labeled path from $\alpha$ to $\gamma$ in $(\alpha,\Delta_{n+1}(r_w(i)),$ $\beta)$.

A dual argument can be used show that for every vertex $\gamma\neq\beta$ there exists a positively labeled path in $\Delta(r_w(i))$ from $\gamma$ to $\beta$.

$(v)$. Let $p$ be a positively labeled path from a vertex $\gamma_1$ to a vertex $\gamma_2$ in $(\alpha, \Delta(r_w(i)),\beta)$. There exists a positively labeled path $p_1$ from $\alpha$ to $\gamma_1$ in $\Delta(r_w(i))$ by $(iv)$. The path $p_1$ is empty if $\alpha$ and $\gamma_1$ represent the same vertex. Similarly, there exists a positively labeled path $p_2$ from $\gamma_2$ to $\beta$ in $\Delta(r_w(i))$ by $(iv)$. The concatenation of paths $p_1pp_2$ is a positively labeled transversal of $\Delta(r_w(i))$ from $\alpha$ to $\beta$. 

\end{proof}

For any semigroup $S=Sg\langle X|R\rangle$, and two words $p,q\in X^+$, $p\to q$ denotes that $q$ is obtained from $p$ by replacing a subword $r$ by $s$, for some $(r,s)\in R$. Any two words $u, v\in X^+$ are equal in the semigroup $S=Sg\langle X|R\rangle$ if and only if there exists a transition sequence from $u$ to $v$,

\begin{centering}
$u\equiv w_0\to w_1\to w_2\to...\to w_n\equiv v$, for some $n\geq 0$,

\end{centering}

The above derivation sequence is called a \textit{regular derivation sequence of length n} for the pair $(u,v))$ over the presentation $Sg\langle X|R\rangle$. 

A \textit{semigroup diagram} or $S-diagram$ over a semigroup presentation $Sg\langle X|R\rangle$ for the pair of positive words $(u,v)$ is a finite planar diagram $D\subseteq \mathbb{R}^2$, that satisfies the following properties:

\begin{itemize}

\item The diagram $D$ is connected and simply connected. 

\item Each edge is directed and labeled by a letter of the alphabet $X$. 

\item Each region of $D$ is labeled by the word $rs^{-1}$ for some defining relation $(r,s)\in R$. 

\item There is a distinguished vertex $\alpha$ on the boundary of $D$ such that the boundary of $D$ starting at $\alpha$ is labeled by $uv^{-1}$. $\alpha$ is a source in $D$ (i.e. there is no edge in $D$ with terminal vertex $\alpha$).

\item There are no interior sources or sinks in $D$. 

\end{itemize} 

In \cite{Remmers}, Remmers proved an analogue of Van Kampen's Lemma for semigroups to address the word problem for semigroups. 

\begin{theorem} Let $S=Sg\langle X|R\rangle$ be a semigroup and $u,v\in X^+$. Then there exists a regular derivation sequence of length $n$ for the pair $(u,v)$ over the presentation $Sg\langle X|R\rangle$ for the pair $(u,v)$ having exactly $n$ regions. 
\end{theorem}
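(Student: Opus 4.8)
The plan is to prove the two implications separately, exhibiting a correspondence that matches the length of a regular derivation sequence with the number of regions of an $S$-diagram, so that a length-$n$ sequence produces an $n$-region diagram and, conversely, an $n$-region diagram yields a length-$n$ sequence.

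First I would treat the direction from derivation sequences to diagrams by induction on $n$. For $n=0$ we have $u\equiv v$, and the required diagram is the degenerate one consisting of the single directed path labeled by $u$ (no regions), whose boundary read from $\alpha$ spells $uu^{-1}=uv^{-1}$ and in which $\alpha$ is plainly a source with no interior vertices. For the inductive step, given a sequence $u\equiv w_0\to\cdots\to w_{n-1}\to w_n\equiv v$, the initial segment of length $n-1$ yields by hypothesis a diagram $D'$ with $n-1$ regions and boundary $uw_{n-1}^{-1}$, whose lower boundary (the positive path from $\alpha$ to $\beta$) spells $w_{n-1}$. Writing the final rewrite as $w_{n-1}\equiv prq\to psq\equiv v$ with $(r,s)\in R$, I would glue a single new region with boundary label $rs^{-1}$ onto $D'$ along the subpath labeled $r$ of the lower boundary. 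The result $D$ is connected, simply connected and planar, has $n$ regions, and its boundary now spells $uv^{-1}$; one checks that $\alpha$ is still a source and that the only new interior vertices are the interior vertices of the former $r$-subpath, each of which carries an incoming and an outgoing edge of that positive subpath and is therefore neither a source nor a sink.

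The substantive direction is from diagrams to derivation sequences, and I would prove it by a sweep argument, again tracking the region count. Given an $S$-diagram $D$ with $n$ regions I want to produce positive transversals $\pi_0,\pi_1,\dots,\pi_n$ from the initial to the terminal vertex of $D$, with $\pi_0$ the upper boundary (reading $u$), $\pi_n$ the lower boundary (reading $v$), and each $\pi_i$ obtained from $\pi_{i-1}$ by replacing the $r$-side of exactly one region by its $s$-side. Reading off the labels $w_i$ of the $\pi_i$ gives $u\equiv w_0\to\cdots\to w_n\equiv v$, a regular derivation sequence whose length equals the number of regions swept. The construction proceeds by induction on the number of regions lying below the current transversal: at each stage I select a region $\Delta$ immediately below $\pi_{i-1}$ whose upper side is a subpath of $\pi_{i-1}$, push $\pi_{i-1}$ across $\Delta$, and observe that the complementary sub-diagram has one fewer region and still satisfies all the defining properties of an $S$-diagram.

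The hard part will be justifying the selection step in the sweep, namely that from any positive transversal that is not yet the lower boundary there is always a region immediately below it whose \emph{entire} upper side lies on the transversal, so that pushing across it is a single legal rewrite rather than a partial one. This is exactly where the hypotheses are indispensable: planarity and simple connectivity give a well-defined above/below relation of regions relative to a transversal, while the requirement that $\alpha$ be a source and that $D$ have no interior sources or sinks forces each side of each region to be a positive path with common initial and terminal vertices, so that the region has an unambiguous positive upper side and positive lower side. I would make the selection precise by choosing a \emph{topmost} such region and verify, using the absence of interior sources and sinks, that its upper side cannot be interrupted by edges leaving the transversal; this guarantees the push is well defined and that the induction on the number of remaining regions terminates after exactly $n$ steps.
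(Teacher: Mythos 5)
The paper contains no proof of this statement for you to be compared against: it is quoted (in fact slightly garbled --- the clause ``$u=v$ in $S$ if and only if \dots if and only if there exists an $S$-diagram \dots'' has partially dropped out) from Remmers' article, which is simply cited. You have correctly reconstructed the intended biconditional, and your two-directional argument is essentially Remmers' original one: attach one new region per rewrite step to pass from derivation sequences to diagrams, and sweep a positive transversal across the diagram one region at a time to pass back. The forward induction is fine as written.

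In the converse direction you have correctly isolated the one substantive point --- that below any positive transversal which is not yet the bottom boundary there is a region whose \emph{entire} top side lies on the transversal --- but as written this is a declared intention rather than a proof. To close it you need the extremal argument in full: if a region $\Delta$ meets the transversal in an edge but its top side leaves the transversal at some vertex, then the transversal and the top side of $\Delta$ bound a nonempty subdiagram strictly between them; since $D$ is finite, this produces a topmost region whose top side cannot be interrupted. The ingredients you cite are the right ones (each region's boundary is a union of two positive paths with common endpoints because its label is $rs^{-1}$ with $r,s\in X^+$, and the no-interior-source/sink condition keeps transversals positive), but the descent itself must be exhibited. Two further details deserve a sentence each: the boundary of an $S$-diagram need not be a simple closed curve, so the above/below relation should be applied one simple component at a time (this is precisely why the paper introduces simple components), with regionless components traversed for free; and after pushing across a region you should note that the complementary subdiagram still has $\alpha$ as a source and no interior sources or sinks (its interior vertices are a subset of the old ones), so the induction hypothesis genuinely applies. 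None of these are wrong turns --- they are the standard details of Remmers' argument --- but the selection lemma is the theorem's actual content and cannot be left as a sketch.
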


A sub-diagram $\Pi'$ of an $S$-diagram $\Pi$ is called a \textit{simple component} of $\Pi$ if it is a maximal sub-diagram whose boundary is labeled by a simple closed curve.

\begin{theorem}\label{MT} 
Let $M=Inv\langle X|R\rangle$ be a finitely presented Adian inverse semigroup.  The Sch\"{u}tzenberger graph of every positive word  is finite if and only if the subgraphs of $S\Gamma(w)$, for all $w\in X^+$, generated by all the $R$-words are finite.
\end{theorem}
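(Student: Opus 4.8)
I would prove the two implications separately; the forward one is routine, and the converse carries all the difficulty. For the forward implication, fix $w\in X^+$ and assume $S\Gamma(w)$ is finite, and fix an $R$-word $r$ together with an occurrence $r_w(i)$. By Proposition \ref{3}$(i)$ each $\theta_n\colon(\alpha_n,\Delta_n(r_w(i)),\beta_n)\to(\alpha_n,\Gamma_n(w),\beta_n)$ is an embedding, and these embeddings are compatible with the connecting maps $\phi_n$ on the $\Delta$-side and with the full $\mathscr{P}$-expansion maps on the $\Gamma$-side. Passing to the direct limit yields an embedding $\theta\colon\Delta(r_w(i))\hookrightarrow S\Gamma(w)$, so $\Delta(r_w(i))$ is isomorphic to a subgraph of a finite graph and hence finite. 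Since $r$ and $i$ are arbitrary, all the subgraphs generated by $R$-words are finite.

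For the converse, fix $w\in X^+$ and assume every $\Delta(r_w(i))$ is finite. As $w$ has finite length it contains each $R$-word only finitely often, so there are only finitely many such subgraphs. Using the embeddings $\theta$ of the previous paragraph I would identify each $\Delta(r_w(i))$ with its image in $S\Gamma(w)$ and set $G=\bigcup_{r,i}\theta(\Delta(r_w(i)))$, the union taken inside $S\Gamma(w)$. Because $\Delta_0(r_w(i))$ is the full linear automaton of $w$ for every $i$, all these subgraphs share the linear automaton; hence $G$ is a connected birooted inverse word graph with roots $\alpha,\beta$, and it is finite as a finite union of finite graphs. The whole problem then reduces to proving $G=S\Gamma(w)$, since the inclusion $G\subseteq S\Gamma(w)$ is built in.

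The plan for $S\Gamma(w)\subseteq G$ is an induction on the generation of regions. Every edge of $S\Gamma(w)$ outside the linear automaton was sewn as one side of a relation and so belongs to a region, and the linear automaton already lies in every $\theta(\Delta(r_w(i)))$; thus it suffices to show that every region of $S\Gamma(w)$ lies in some $\theta(\Delta(r_w(i)))$. A first-generation region is, by definition, the region that the construction of $\Delta(r_w(i))$ sews onto the $i$-th occurrence of an $R$-word, so it lies in $\theta(\Delta(r_w(i)))$. For the inductive step, an $(n{+}1)$st-generation region $\rho$ shares an edge with an $n$th-generation region, which by hypothesis already lies in some subtree $\theta(\Delta(r_w(i)))$; the aim is to conclude that the unsaturated segment spelling one side of $\rho$ is present, and is therefore saturated, inside that same $\theta(\Delta(r_w(i)))$, so that $\rho\in\theta(\Delta(r_w(i)))$. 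Granting this, $G$ contains every region of $S\Gamma(w)$, whence $G=S\Gamma(w)$ is finite; invoking Theorem \ref{closure} one may equivalently phrase the conclusion by checking that $G$ is an approximate graph reachable from the linear automaton by $\Rightarrow^{*}$ and that it is closed.

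The hard part is exactly the inductive step, namely ruling out \emph{straddling} segments: a priori the segment $r'$ that triggers $\rho$ might be read along a path using edges of two distinct subgraphs $\Delta(r_w(i))$ and $\Delta(r_w(j))$ while lying in neither, so that no single subgraph saturates it and $\rho$ escapes the union. This is where the Adian hypothesis must be used decisively. I would argue that, by determinism (Lemma \ref{2}) together with the absence of folding (Lemma \ref{1}), the path spelling $r'$ is the unique such path, and then trace it back through the generations---using the positive transversal through $r'$ provided by Proposition \ref{3}$(v)$, the associated $S$-diagram, and the cycle-freeness of $LG\langle X|R\rangle$ and $RG\langle X|R\rangle$---to show that all of its edges descend from one and the same first-generation occurrence. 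Hence the whole segment, and the region $\rho$ it creates, lie in a single $\theta(\Delta(r_w(i)))$, completing the induction and the proof.
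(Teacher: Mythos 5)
Your forward direction is fine and matches the paper's (one-line) treatment. The converse, however, rests on a claim that is false: that $S\Gamma(w)$ equals the union of the subgraphs $\Delta(r_w(i))$ generated by the occurrences of $R$-words \emph{in $w$ itself}. The paper's own running example refutes this. Take $Inv\langle a,b\mid ab=ba\rangle$ and $w\equiv a^2b^2a^2b^2$, and identify vertices with lattice points so that the linear automaton runs $(0,0)\to(2,0)\to(2,2)\to(4,2)\to(4,4)$. A direct computation of the three subgraphs shows that $\Delta(ab_w(1))$ contributes beyond the linear automaton only the vertices $(1,1),(0,1),(1,2),(0,2)$, that $\Delta(ba_w(1))$ contributes only $(3,1),(3,0),(4,1),(4,0)$, and that $\Delta(ab_w(2))$ contributes only $(3,3),(2,3),(3,4),(2,4)$, whereas $S\Gamma(w)$ is the full $5\times 5$ grid. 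The segment labeled $ab$ from $(1,2)$ to $(2,3)$ uses one edge produced inside $\Delta(ab_w(1))$ and one produced inside $\Delta(ab_w(2))$; it lies in neither subgraph, and the region it spawns (the vertex $(1,3)$, and subsequently $(0,3),(1,4),(0,4)$) belongs to no $\Delta(r_w(i))$ whatsoever. So your union $G$ is a proper, non-closed subgraph of $S\Gamma(w)$, and no amount of tracing back through the generations will attach a straddling region to a single first-generation occurrence --- it genuinely has none. (You correctly identified straddling as the hard point, but the resolution you propose for it cannot work.)

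The repair cannot stay inside the single word $w$: the hypothesis of the theorem is quantified over \emph{all} positive words, and the paper uses that strength essentially. Its proof runs by induction on the number of $R$-word occurrences in $w$: factor $w\equiv w_1w_2$ with each factor containing fewer occurrences, sew the (inductively finite) graphs $S\Gamma(w_1)$ and $S\Gamma(w_2)$ onto the linear automaton to form a graph $S_0$, observe that every remaining unsaturated segment must pass through the unique junction vertex $\gamma$, and then sew on the subgraphs generated by the $R$-words occurring in the \emph{transversals} $t_1,\dots,t_m$ of $S_0$ --- positive words generally different from $w$ --- finally verifying $\mathscr{P}$-completeness of the result by the $S$-diagram and simple-component analysis. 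In the example above, the missing region at $(1,3)$ is captured precisely because it lies in a subgraph generated by an $R$-word occurring in such a transversal, not in $w$. Your proposal never invokes the hypothesis for any word other than $w$ itself, which is why it cannot close the gap.
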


\textit{\textbf{Idea of the proof:}} We assume that the subgraphs of $S\Gamma(t)$, for all $t\in X^+$, generated by all the $R$-words are finite and we let $w$ be an arbitrary positive word, $w\in X^+$. We use induction on the number of segments labeled by the $R$-words of the linear automaton of $w$, $(\alpha_0, \Gamma_0(w), \beta_0)$, to show that $S\Gamma(w)$ is finite. The construction of $S\Gamma(w)$ only involves the applications of elementary $\mathscr{P}$-expansions and no foldings by Lemma \ref{1}. We factorize $w$ as $w\equiv w_1w_2$, such that $w_1$, and $w_2$ contain fewer segments labeled by the $R$-words than $w$. So, by induction hypothesis, $S\Gamma(w_1)$ and $S\Gamma(w_2)$ are finite. To speedup the Stephen's process of successive applications of elementary $\mathscr{P}$-expansions we sew on $S\Gamma(w_1)$ and $S\Gamma(w_2)$ to the corresponding segments of $(\alpha_0, \Gamma_0(w), \beta_0)$, and denote the resulting finite inverse word graph by $S_0$. We observe that in $S_0$ there is only one vertex (labeled by $\gamma$)that is common to $S\Gamma(w_1)$ and $S\Gamma(w_2)$. Since $S\Gamma(w_1)$ and $S\Gamma(w_2)$ are $\mathscr{P}$-complete, therefore we can only find some unsaturated segments passing through $\gamma$. To speedup Stephen's processes of successive applications of $\mathscr{P}$-expansions we sew on the finite subgraphs generated by the $R$-words that label unsaturated segments passing through $\gamma$, to the corresponding transversals of $S_0$, and obtain a finite inverse word graph, $S_1$. Finally, we show that $S_1$ is $\mathscr{P}$-complete. So, $S_1$ is $S\Gamma(w)$.

\begin{proof} The direct statement is obvious. So, we prove the converse statement only.  Let $w$ be an arbitrary word, $w\in X^+$. If $w$ does not contain any $R$-word as its subword, then our claim follows immediately. If $w$ contains only one $R$-word as its subword, such that the $R$-word occurs only once in $w$, then the finiteness of $S\Gamma(w)$ follows from the hypothesis of the theorem.  

We assume that the statement of the theorem is true for any positive word that contains less than $n$ number of (not necessarily distinct) $R$-words as its subword, where $n\in \mathbb{N}$ and $n\geq 2$. We assume that $w$ contains $n$ number of (not necessarily distinct) $R$-words as its subwords. We show that $S\Gamma(w)$ is finite.

 We factorize $w$ as $w\equiv w_1w_2$, such that both $w_1$ and $w_2$ contain less than $n$ $R$-words as their subwords. The Sch\"{u}tzenberger graphs of $w_1$ and $w_2$ are finite by the induction hypothesis. We construct the linear automaton of $w$, $(\alpha_0, \Gamma_0(w),\beta_0)$. We sew on $S\Gamma(w_1)$ and $S\Gamma(w_2)$ to $(\alpha_0, \Gamma_0(w),\beta_0)$ at the segments labeled by $w_1$ and $w_2$, respectively.  We denote the resulting graph by $S_0$ (see Figure \ref{1a}).

 \begin{figure}[h!]
\centering
\includegraphics[trim = 0mm 0mm 0mm 0mm, clip,width=2.8in]{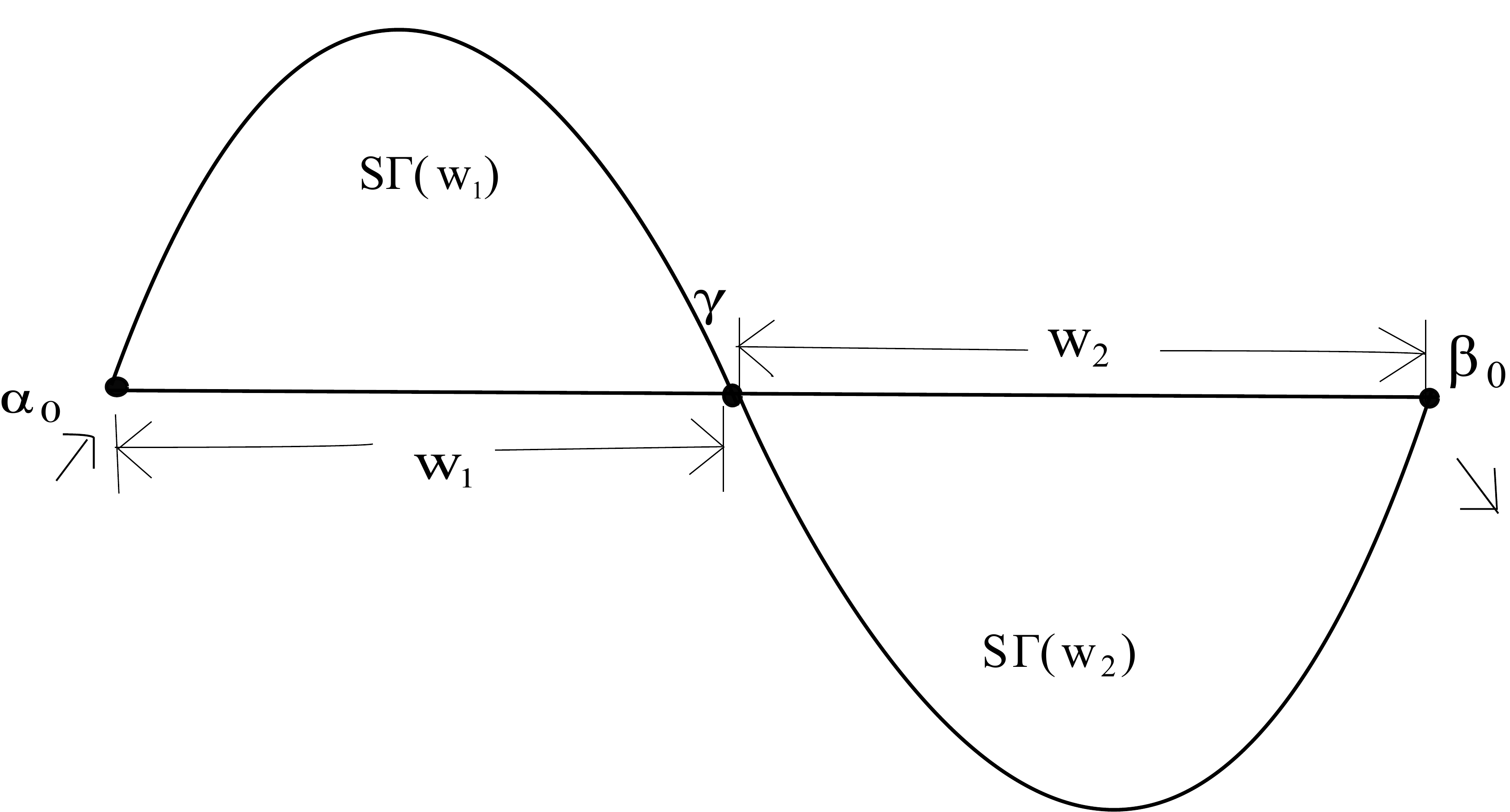}
\caption{$S_0$ with $S\Gamma(w_1)$ and $S\Gamma(w_2)$ attached to the segments $w_1$ and $w_2$}
\label{1a}
\end{figure}

We observe that in $S_0$ there is only one vertex (labeled by $\gamma$) common to $S\Gamma(w_1)$ and $S\Gamma(w_2)$. The vertex $\gamma$ is the terminal vertex of $S\Gamma(w_1)$, and the initial vertex of $S\Gamma(w_2)$. Therefore, every transversal (a positively labeled path from the initial vertex $\alpha_0$ to the terminal vertex $\beta_0$) of $S_0$ passes through $\gamma$ (see Figure \ref{2a}). We know that $S\Gamma (w_1)$ and $S\Gamma (w_2)$ are $\mathscr{P}$-complete. So, in $S_0$ we can only  find some unsaturated segments labeled by some $R$-words starting from a vertex of $S\Gamma(w_1)$, passing though $\gamma$, and terminating at a vertex of $S\Gamma(w_2)$. 
 
By Lemma 2 of \cite{AD}, the Sch\"{u}tzebger graphs $S\Gamma(w_1)$ and $S\Gamma(w_2)$ do not contain any positively labeled cycle (closed path). Therefore, there are only a finite number of transversals in $S_0$, as it is a finite inverse word graph. We consider those transversals of $S_0$ which contain an unsaturated segment labeled by an $R$-word passing through  $\gamma$. We assume that these transversals are labeled by $t_1,t_2,...,t_m$, for some $m\in \mathbb{N}$. 

 \begin{figure}[h!]
\centering
\includegraphics[trim = 0mm 0mm 0mm 0mm, clip,width=2.8in]{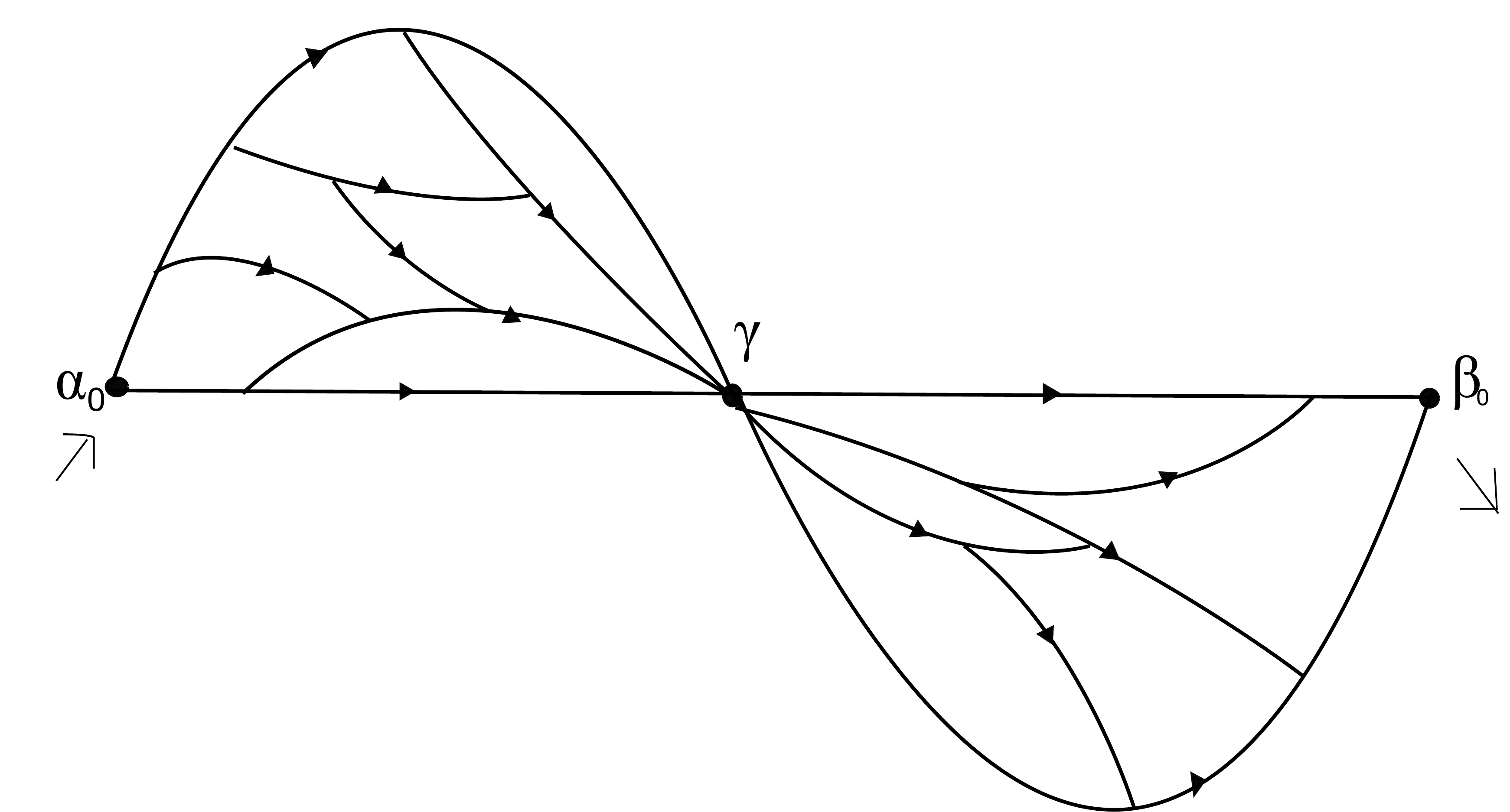}
\caption{$S_0$ with some transversals highlighted}
\label{2a}
\end{figure}

By hypothesis, the subgraphs of $S\Gamma(t_i)$, for $1\leq i\leq m$, generated by all the $R$-words are finite. It is possible that a transversal $t_i$, for some $i\in \{1,2,...,m\}$, may contain more than one unsaturated segment  passing through $\gamma$, because the $R$-words can overlap with themselves or with the other $R$-words.  We generate the subgraphs of $S\Gamma(t_i)$ by the $R$-words labeling an unsaturated segment passing through $\gamma$, and denote them by $\Gamma_j$, where $1\leq j\leq l$ for some $l\geq m$.

We sew on the finite subgraphs $\Gamma_1, \Gamma_2,...,,\Gamma_l$ to $S_0$ at the corresponding  transversals $t_i$'s (see Figure \ref{3a}). Consequently, we obtain a finite graph. We denote this new graph by $S_1$. No two edges fold together in $S_1$ by Lemma \ref{1}. Finally, we show that $S_1$ is $\mathscr{P}$-complete.  

\begin{figure}[h!]
\centering
\includegraphics[trim = 0mm 0mm 0mm 0mm, clip,width=2.8in]{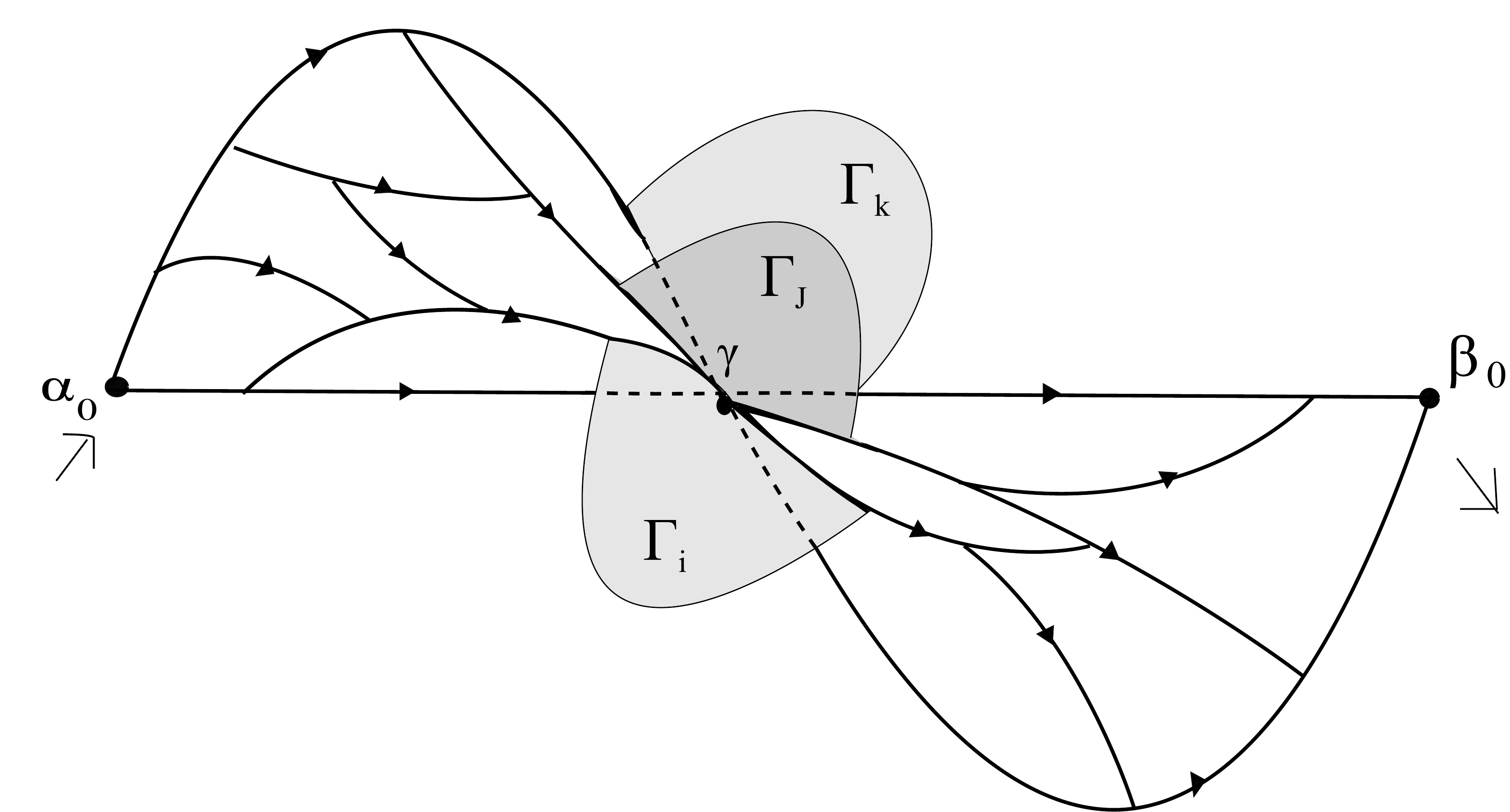}
\caption{$S_0$ with some $\Gamma_i$'s attahed}
\label{3a}
\end{figure}

To prove the $\mathscr{P}$-completeness of $S_1$ we need to show that $S_1$ does not contain any unsaturated segment, where elementary $\mathscr{P}$-expansions can be applied. So, we prove the following two claims. 

\begin{enumerate} 

\item  Every segment labeled by an $R$-word that either starts from a vertex of $S_0$ and terminates at a vertex of $\Gamma_i$, or vice versa, for some $i\in\{1,2,...,l\}$, is contained in $\Gamma_j$, for some $j$ $(1\leq j\neq i\leq l)$. 

\item 
Every segment labeled by an $R$-word that starts from a vertex of $\Gamma_i$ and terminates at a vertex of $\Gamma_j$, for $1\leq i\neq j\leq l$, is contained in $\Gamma_k$ for some $k\neq i,j$. 
\end{enumerate}

From claims $(1)$ and $(2)$, it follows that every segment of $S_1$ that is labeled by an $R$-word, is either entirely contained in $S_0$ or in $\Gamma_i$, for some $i\in\{1,2,...,l\}$. Hence, $S_1$ does not contain any unsaturated segment labeled by an $R$-word.

\textit{Proof of $(1)$}: Without loss of generality, we assume that we can find a segment labeled by an $R$-word that starts from a vertex $\gamma_0$ of $S_0$, passes through a vertex $\gamma_1$ of the transversal $t_i$, and terminates at a vertex $\gamma_2$ of $\Gamma_i$, for some $1\leq i\leq l$. Here $\gamma_1$ is the first vertex that is common between the transversal $t_i$ and the segment labeled by the $R$-word from $\gamma_0$ to $\gamma_2$. A dual argument can be used to show that every segment labeled by an $R$-word, that starts from a vertex of $\Gamma_i$  and terminates at a vertex of $S_0$, is contained in $\Gamma_j$, for $1\leq i\neq j\leq l$.

The vertex $\gamma_1$ lies either before the vertex $\gamma$, or at the vertex $\gamma$. If the vertex $\gamma_1$ lies after the vertex $\gamma$, then by Proposition $3(iii)$ of \cite{AD}, we can find a positively labeled segment from $\alpha_0$ to $\gamma_0$. We can extend this path to a transversal of $S_0$ by extending this path to $\gamma_1$, and proceeding along the transversal $t_i$ up to $\beta$.  This transversal does not pass through $\gamma$, which contradicts the fact that every transversal of $S_0$ passes through $\gamma$.

If the subsegment from $\gamma_1$ to $\gamma_2$, of the segment labeled by the $R$-word lies on the transversal $t_i$, then by Proposition $3(iii)$ of \cite{AD}, we can find a positively labeled segment from $\alpha_0$ to $\gamma_0$. We can extend this segment to a transversal of $S_0$ by proceeding along the segment from $\gamma_0$ to $\gamma_2$, and further proceeding along the transversal $t_i$.  We denote this transversal by $t$. If $\gamma_2$ lies before or at $\gamma$, then the segment from $\gamma_0$ to $\gamma_2$ lies in $S\Gamma(w_1)$. In this case, the segment from $\gamma_0$ to $\gamma_2$ cannot be unsaturated, as $S\Gamma(w_1)$ is $\mathscr{P}$-complete. If $\gamma_2$ lies after $\gamma$, that is, $\gamma_1$ lies at $\gamma$, or $\gamma$ lies between the vertices $\gamma_1$ and $\gamma_2$, then the transversal $t$ contains the segment from $\gamma_0$ to $\gamma_2$, that is labeled by an $R$-word, and passes through $\gamma$. So, the transversal $t$ is among one of the transversals  $t_1,t_2,...,t_m$. Hence, $t\equiv t_j$ where $1\leq i\neq j \leq m$. The subgraph $\Gamma_j$ generated by an $R$-word, that labels the segment from $\gamma_0$ to $\gamma_2$, and sewed onto the transversal $t_j$, contains the segment from $\gamma_0$ to $\gamma_2$. So, our claim holds in this case. 

If the vertex $\gamma_2$ does not lie on the transversal $t_i$, then by using Proposition $3(iii)$ of \cite{AD}, we can find a positively labeled segment in $\Gamma_i$ that starts from $\gamma_1$, passes through $\gamma_2$ and terminates at a vertex of the transversal $t_i$. We assume that this segment is labeled by $p'$, for some $p'\in X^+$.  We assume that the subsegment of $t_i$, from $\gamma_1$ to the terminal vertex of $p'$, is labeled by $p$, for some $p\in X^+$.  By Lemma $4$ of \cite{AD}, there exists an $S$-diagram whose boundary is labeled by the pair of words $(p,p')$. Again, by using Proposition $3(iv)$ of \cite{AD}, we can find a transversal $s$ of $S_0$ that passes through the subsegment from $\gamma_0$ to $\gamma_1$ of the segment labeled by the $R$-word, and the segment $p$. Clearly, the $S$-diagram corresponding to $(p,p')$ embeds in $S\Gamma(s)$. If the transversal $s$ does not contain an unsaturated segment labeled by an $R$-word, then $S\Gamma(s)$ embeds in $S_0$. This implies that the vertex $\gamma_2$ lies in $S_0$, which is a contradiction. Hence, $s$ contains an unsaturated segment labeled by an $R$-word passing through $\gamma$. Therefore, $s\equiv t_j$, for some $1\leq i\neq j \leq m$. The segment from $\gamma_0$ to $\gamma_2$, labeled by the $R$-word, is contained in some $\Gamma_j$, that was sewn onto the transversal $t_j$ and contains the $S$-diagram with boundary labels $(p, p')$.

\textit{Proof of $(2)$}: We assume that there exists a segment labeled by an $R$-word that starts from a vertex $\gamma_i$ of $\Gamma_i$ and terminates at a vertex $\gamma_j$ of $\Gamma_j$ for $i\neq j$. If this segment from $\gamma_i$ to $\gamma_j$ lies in $S_0$, such that it does not pass through the vertex $\gamma$, then this segment either lies in $S\Gamma(w_1)$ or in $S\Gamma(w_2)$. In this case, the segment from $\gamma_i$ to $\gamma_j$ cannot be unsaturated, as $S\Gamma(w_1)$ and $S\Gamma(w_2)$ both are $\mathscr{P}$-complete. 

If the segment from $\gamma_i$ to $\gamma_j$ lies in $S_0$, and passes through $\gamma$, then by Proposition $3(iv)$ of \cite{AD} we can extend this segment to a transversal labeled by $t$, for some $t\in X^+$ of $S_0$. The transversal $t$ contains a segment labeled by an $R$-word passing through $\gamma$,  therefore, $t\equiv t_k$, for some $k\in \{1,2,..., m\}$. So, the claim is true in this case, because the segment from $\gamma_i$ to $\gamma_j$ is contained in $\Gamma_k$ (where $\Gamma_k$ is the subgroup of $S\Gamma(t_k)$, generated by the $R$-word that labels the segment from $\gamma_i$ to $\gamma_j$).  

Now we assume that the segment from $\gamma_i$ to $\gamma_j$ does not lie in $S_0$, where the vertices  $\gamma_i$ and $\gamma_j$ can lie in $S_0$. We can find a vertex $\gamma_{t_i}$ on the transversal $t_i$, and a vertex $\gamma_{t_j}$ on the transversal $t_j$, such that the segments from $\gamma_{t_i}$ to $\gamma_i$, and from $\gamma_j$ to $\gamma_{t_j}$ are the shortest non-negatively labeled segments from $S_0$ to $\gamma_i$ and from $\gamma_j$ to $S_0$. If $\gamma_i$ or $\gamma_j$ lies in $S_0$, then $\gamma_i$ $(\gamma_j)$ and $\gamma_{t_i}$ $(\gamma_{t_j})$ represents the same vertex, otherwise, there will be positively labelled segment from $\gamma_{t_i}$ to $\gamma_i$ ($\gamma_{t_j}$ to $\gamma_j$). Obviously, none of the edges of the segments from $\gamma_{t_i}$ to $\gamma_i$, and from $\gamma_j$ to $\gamma_{t_j}$ lie in $S_0$, otherwise they will not be the shortest ones. We consider the positively labeled segment starting from $\gamma_{t_i}$, passing through the segment from $\gamma_i$ to $\gamma_j$ labeled by an $R$-word, and terminating at $\gamma_{t_j}$. We assume that this segment is labeled by $p'$, for some $p'\in X^+$. The vertex $\gamma_{t_i}$ lies in $S\Gamma(w_1)$ and the vertex $\gamma_{t_j}$ lies in $S\Gamma(w_2)$. So, $S_0$ contains a positively labeled segment from $\gamma_{t_i}$ to $\gamma_{t_j}$, passing through $\gamma$. We assume that the segment from $\gamma_{t_i}$ to $\gamma_{t_j}$, of $S_0$, is labeled by $p$, for some $p\in X^+$. By Lemma $4$ of \cite{AD}, there exists an $S$-diagram with boundary labeled by the pair of words $(p,p')$ that embeds in $S\Gamma(w)$. We denote this $S$-diagram by $\Pi$. 

The segment labeled by $p$ contains a subsegment labeled by an $R$-word passing through $\gamma$. Otherwise, the $S$-diagram $\Pi$, embeds in $S_0$. This leads us to a contradictory conclusion that the segment from $\gamma_i$ to $\gamma_j$, labeled by an $R$-word, lies in $S_0$.

The $S$-diagram $\Pi$ consists of only one simple component. If this $S$-diagram consists of more than one simple component, then there exists a vertex $\delta$ between $\gamma_{t_i}$ and $\gamma_{t_j}$, where the segments $p$ and $p'$ intersect with each other. We already observed that $p$ contains a subsegment labeled by  an $R$-word passing through $\gamma$. If $\delta$ lies anywhere between the initial and terminal vertex of the subsegment of $p$, that is labeled by an $R$-word and passes through $\gamma$, then it means that this subsegment of $p$ does not play any role in construction of the $S$-diagram $\Pi$. So, the $S$-diagram $\Pi$  embeds in $S_0$. This leads us to a contradiction that  the segment from $\gamma_i$ to $\gamma_j$, labeled by an $R$-word, lies in $S_0$. 

Now we show that $\delta$ cannot lie before or at the initial vertex of the subsegment of $p$, that is labeled by an $R$-word and passes through $\gamma$. A similar argument can be used to show that $\delta$ cannot lie at the terminal vertex or after the terminal vertex of the subsegment of $p$, that is labeled by an $R$-word and passes through $\gamma$.

If $\delta$ lies anywhere before the initial vertex or at the initial vertex of the subsegment of $p$, that is labeled by an $R$-word and passes through $ \gamma$, then $p$ can be factorized as $p\equiv p_1 p_2$, where $p_1$ labels the segment from $\gamma_{t_i}$ to $\delta$, and $p_2$ labels the segment from $\delta$ to $\gamma_{t_j}$. The segment $p_1$ lies in $S\Gamma(w_1)$. But, $S\Gamma(w_1)$ is $\mathscr{P}$-complete. Therefore, the subsegment of $p'$ from $\gamma_{t_i}$ to $\delta$, also lies in $S\Gamma(w_1)$, hence in $S_0$. This contradicts with the choice of the vertex $\gamma_{t_i}$. 

By Proposition $3(iii)$ of \cite{AD}, the segment $p$ can be extended to a transversal $t$ of $S_0$. Since this transversal $t$ contains a segment labeled by an $R$-word passing through $\gamma$, $t\equiv t_k$, for some $k\in\{1,2,...,m\}$. The $S$-diagram $\Pi$ consists of only one simple component. Therefore, it can be generated by some successive applications of elementary $\mathscr{P}$-expansions starting from the subsegment of $p$, that is labeled by an $R$-word and passes through $\gamma$. The subgraph $\Gamma_k$ generated by the subsegment of $p$, that is labeled by an $R$-word and passes through $\gamma$, is the largest subgraph of $S\Gamma(w)$ that can be generated by successive applications of elementary $\mathscr{P}$-expansions. So, the $S$-diagram $\Pi$ is contained in $\Gamma_k$.  This implies that the segment from $\gamma_i$ to $\gamma_j$ labeled by an $R$-word is also contained in $\Gamma_k$.

 \end{proof}
 
  \section{The Word Problem For Some classes of One relation Adian Inverse Semigroups}


In this paper, we only focus on those one relation Adian inverse semigroups, in which no $R$-word is a subword of the other $R$-word. 

\begin{lemma} \label{L2}Let $M=Inv\langle X|u=v\rangle$ be an Adian inverse semigroup. If none of the $R$-words is a subword of the other $R$-word, and no $R$-word overlaps with itself or with the other $R$-word, then $S\Gamma(t)$, for all $t\in X^+$, is finite. 
\end{lemma}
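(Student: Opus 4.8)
The plan is to combine Theorem \ref{MT2} with a direct analysis of the generating subgraphs. By Theorem \ref{MT2} it suffices to prove that for every $w\in X^+$ containing an $R$-word $r\in\{u,v\}$ and every occurrence $i$, the subgraph $\Delta(r_w(i))$ is finite. I claim that under the stated hypotheses this subgraph stabilises after a single elementary $\mathscr{P}$-expansion, i.e. $\Delta(r_w(i))=\Delta_1(r_w(i))$. Since $\Delta_1(r_w(i))$ is obtained from the (finite) linear automaton of $w$ by sewing on one segment labelled by $s$, where $(r,s)$ is the relation read up to orientation, it is finite, and the lemma follows.

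To prove the claim I would show that no $R$-word can be read starting from, or terminating at, an interior vertex of the segment $s$ sewn on at the first step; by the stopping criterion of the construction this forces $\Delta(r_w(i))=\Delta_1(r_w(i))$. Write $s=v$ and $r=u$ (the case $r=v$ being symmetric), and let $\gamma_0,\gamma_1$ be the initial and terminal vertices of the occurrence of $u$, so that $v$ is sewn from $\gamma_0$ to $\gamma_1$. By Lemma \ref{1} and Lemma \ref{2} no folding occurs, so in $\Delta_1(r_w(i))$ every interior vertex $\delta$ of the sewn segment $v$ is incident only to the two $v$-edges at $\delta$. Hence any $R$-word $r'$ readable as a positive word starting at $\delta$ must begin by following $v$ towards $\gamma_1$: writing $v=v_1v_2$ with $\delta$ the vertex between $v_1$ and $v_2$, the word $r'$ starts with the nonempty proper suffix $v_2$ of $v$. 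Either $r'$ is a prefix of $v_2$, so $r'$ is a proper subword of $v$ --- impossible for $r'=v$ by length and forbidden for $r'=u$ by the hypothesis that neither $R$-word is a subword of the other; or $r'$ runs past $\gamma_1$ into the linear part of $w$, in which case the nonempty proper suffix $v_2$ of $v$ is a nonempty proper prefix of $r'$, an overlap of $v$ with $r'$. This overlap is a self-overlap of $v$ when $r'=v$ and an overlap of $v$ with $u$ when $r'=u$, both excluded by hypothesis. The dual argument, reading $r'$ into an interior vertex $\delta$ along $v_1$ and possibly continuing backwards past $\gamma_0$, rules out $R$-words terminating at interior vertices of $v$, now using the nonempty proper prefix $v_1$ of $v$.

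Consequently $\Delta_1(r_w(i))$ admits no unsaturated $R$-word segment at an interior vertex of $s$, so $\Delta(r_w(i))=\Delta_1(r_w(i))$ is finite for every $w$, $r$, and $i$; by Theorem \ref{MT2}, $S\Gamma(t)$ is finite for all $t\in X^+$. The only delicate point is the case analysis above: one must verify that an $R$-word touching an interior vertex of $s$ genuinely produces either a subword containment or a prefix/suffix overlap, and that all four combinations (start versus terminate, $r'=u$ versus $r'=v$) are covered, since it is precisely the absence of self-overlaps, cross-overlaps, and subword containment that is being used. I expect this bookkeeping, rather than any deeper structural fact, to be the main thing to get right.
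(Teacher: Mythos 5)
Your argument is correct, but it takes a different route from the paper's. The paper proves Lemma \ref{L2} directly: it forms the linear automaton of $t$, performs a single full $\mathscr{P}$-expansion to obtain $(\alpha_1,\Gamma_1(t),\beta_1)$, and argues that this graph is already closed --- no unsaturated segment can lie inside a newly attached segment (since neither $R$-word is a subword of the other) and none can straddle the boundary of a newly attached segment (since there are no self-overlaps or cross-overlaps) --- so $\Gamma_1(t)$ is the finite graph $S\Gamma(t)$. You instead prove that each generated subgraph $\Delta(r_w(i))$ stabilises at $\Delta_1(r_w(i))$ and then invoke Theorem \ref{MT2} to conclude finiteness of $S\Gamma(t)$. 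The combinatorial core is identical (an $R$-word touching an interior vertex of the sewn segment forces either a subword containment or a prefix/suffix overlap, both excluded), and your explicit factorisation $v\equiv v_1v_2$ at the interior vertex $\delta$ is in fact more careful bookkeeping than the paper supplies. The trade-off is that your route makes Lemma \ref{L2} depend on the main theorem, whereas in the paper this case is deliberately self-contained (the proof of Theorem \ref{MT1}, case (1), uses only Lemma \ref{L2} and Theorem \ref{PMT}, not Theorem \ref{MT}); on the other hand, your strategy is exactly the one the paper adopts for the overlapping cases (e.g.\ Lemma \ref{L3}), so it unifies case (1) with the others at the cost of invoking heavier machinery than necessary.
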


\begin{proof}
Let $t$ be a positive word. If $t$ does not contain any $R$-word as its subword, then there is nothing to prove, because $S\Gamma(t)$ is finite in this case.

We assume that $t$ contains $n$ number of (not necessarily distinct) $R$-words, for some $n \in \mathbb{N}$. We construct the linear automaton of $t$, denoted by $(\alpha_0,\Gamma_0(t), \beta_0)$.  We apply full $\mathscr{P}$-expansion on $(\alpha_0,\Gamma_0(t), \beta_0)$ and obtain $(\alpha_1,\Gamma_1(t), \beta_1)$. Since none of the $R$-word is subword of the other $R$-word, we cannot find any unsaturated sub-segment labeled by an $R$-word  of the newly attached segments of $(\alpha_1,\Gamma_1(t), \beta_1)$. We know from Proposition 3(iii) of \cite{AD}, that $(\alpha_1,\Gamma_1(t), \beta_1)$ is a directional graph in the sense that every vertex of $(\alpha_1,\Gamma_1(t), \beta_1)$ lies on a transversal from $\alpha_1$ to $\beta_1$.  We cannot find an unsaturated segment labeled by an $R$-word that either starts from a vertex which lies before the initial vertex of a newly attached segment and terminates at an interior vertex of the newly attached segment, or that starts from an interior vertex of a newly attached segment and terminates at a vertex which lies after the terminal vertex of the newly attached segment, as none of the $R$-words overlap with themselves or with the other $R$-word. So, $(\alpha_1,\Gamma_1(t), \beta_1)$ cannot be expanded further by applying full $\mathscr{P}$-expansion.  Hence, the underlying finite graph of $(\alpha_1,\Gamma_1(t), \beta_1)$ is $S\Gamma(t)$. 
\end{proof}

The one relation Adian presentations with no $R$-word being a subword of the other $R$-word, as well as some overlap between the $R$-words, can be distributed into four classes. 

\begin{proposition}\label{types} Let $\langle X|u=v\rangle$ be a positive presentation, such that no $R$-word is a subword of the other $R$-word. There are four different types of overlaps possible in the presentation $\langle X|u=v\rangle$.
\begin{enumerate}

  \item One of the $R$-word has same prefix and suffix. The other $R$-word doesn't overlap with itself and there is no overlap between the two $R$-words. 
   
 \item Both the $R$-words overlap with themselves and they do not overlap with each other. 
  
 \item  Prefix of an $R$-word is suffix of the other $R$-word, and no suffix of the former $R$-word is a prefix of  the latter $R$-word.
  
\item A prefix of one $R$-word is a suffix of the other $R$-word, and a suffix of the former $R$-word is a prefix of the later $R$-word.

 \end{enumerate}

\end{proposition}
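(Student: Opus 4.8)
The plan is to prove the classification by an exhaustive, mutually exclusive case analysis on the overlap structure of the ordered pair $(u,v)$, organized first according to whether a \emph{mutual} overlap is present and only afterwards according to the \emph{self}-overlaps. First I would record the two facts furnished by the hypotheses. Since the presentation carries a single relation, $LG\langle X|u=v\rangle$ consists of the one edge joining the first letter of $u$ to the first letter of $v$, and $RG\langle X|u=v\rangle$ of the one edge joining their last letters; a single edge is a closed path only when it is a loop, so the ambient Adian hypothesis is equivalent to requiring that $u$ and $v$ have distinct first letters and distinct last letters, which pins down where any overlap must begin and end. Second, the assumption that neither $R$-word is a subword of the other guarantees that every overlap in play is \emph{proper}: whenever a prefix of one word coincides with a suffix of another, the common factor is strictly shorter than both words, since otherwise one word would sit inside the other as a subword. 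These observations fix the combinatorial setting, and the exhaustiveness to follow rests only on the proper-overlap fact.

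With this in hand I would perform the split. Call $u$ \emph{self-overlapping} if a proper nonempty prefix of $u$ equals a proper nonempty suffix of $u$ (a nontrivial border), and likewise for $v$; and say $u,v$ \emph{overlap mutually} if a proper prefix of one equals a proper suffix of the other, noting there are exactly two directions for this (prefix of $u$ equal to suffix of $v$, and prefix of $v$ equal to suffix of $u$). The standing assumption is that the pair carries at least one overlap, the overlap-free case having been disposed of in Lemma \ref{L2}. I would then ask whether a mutual overlap occurs. If it does not, then some self-overlap must occur, and since there are only two $R$-words the self-overlaps number either one or two, giving precisely Type $1$ or Type $2$. If a mutual overlap does occur, it occurs in exactly one of the two directions or in both, giving precisely Type $3$ or Type $4$. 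This covers every possibility, and the four outcomes are pairwise incompatible by construction: Types $1,2$ have no mutual overlap while Types $3,4$ do, Types $1$ and $2$ are separated by the number of self-overlaps, and Types $3$ and $4$ by the number of overlap directions.

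The step I expect to demand the most care is reconciling this clean split with the literal wording of the four types, since Types $3$ and $4$ are silent about self-overlaps. I must verify that a word may self-overlap while the pair simultaneously overlaps mutually without spawning a fifth configuration; concretely, the pair $u\equiv aba$, $v\equiv cab$ is Adian and subword-free, has $u$ self-overlapping (border $a$) and a single mutual overlap ($ab$ is a prefix of $u$ and a suffix of $v$), and it must be classified as Type $3$ on the strength of its mutual overlap alone. I would therefore state explicitly that the classification is governed first by the mutual overlap and only, in its absence, by the self-overlaps, so that co-occurring self-overlaps are absorbed into Types $3$ and $4$ rather than creating a new class. Finally I would exhibit one presentation realizing each type, for instance the commuting relation $ab=ba$ for the bi-directional Type $4$, so that the proposition asserts not merely that these are the only possible overlap types but that all four genuinely occur.
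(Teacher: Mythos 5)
Your proposal is correct and follows essentially the same route as the paper: an exhaustive case split in which the two self-overlap possibilities (one word or both) give Types 1 and 2, the two mutual-overlap possibilities (one direction or both) give Types 3 and 4, and any self-overlaps co-occurring with a mutual overlap are absorbed as sub-cases of Types 3 and 4. Your added observations (properness of overlaps from the subword-free hypothesis, the explicit example resolving the "fifth configuration" worry) only make explicit what the paper's proof asserts more briefly.
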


\begin{proof} There are only two $R$-words $u$ and $v$ in the presentation $\langle X|u=v\rangle$. So, there can be only two possibilities of an $R$-word overlapping with itself. That is, either one of the $R$-words overlaps with itself or both of the $R$-words overlap with themselves. Both of these possibilities are considered in cases $(1)$ and $(2)$ of the above statement. Obviously, cases $(1)$ and $(2)$ cannot occur simultaneously. 

There are only two possibilities of an $R$-word overlapping with the other $R$-word in an Adian presentation of the form $\langle X|u=v\rangle$. That is, either a prefix of an $R$-word is a suffix of the other $R$-word, and no suffix of the former $R$-word is a prefix of  the latter $R$-word, or a prefix of one $R$-word is the suffix of the other $R$-word and a suffix of the former $R$-word is the prefix of the latter $R$-word. Both of these possibilities are considered in cases $(3)$ and $(4)$ of the above statement. Obviously, cases $(3)$ and $(4)$ cannot occur simultaneously. 

In case $(3)$, it is also possible that one or both of the $R$-words overlap with themselves as well. But, these possibilities are considered as sub-cases of $(3)$. Also in case $(4)$, it is possible that one or both of the $R$-words overlap with themselves. But, these possibilities are considered as sub-cases of $(4)$. 
\end{proof}

We say that an Adian inverse semigroup $M=\langle X|u=v\rangle$ is of type $(1)$, when the presentation $\langle X|u=v\rangle$ is of type $(1)$ of Proposition \ref{types}.

\begin{lemma}\label{L3} Let $M=Inv\langle X|u=v\rangle$ be an Adian inverse semigroup of type $(1)$.  Then every subgraph of $S\Gamma(w)$, for all $w\in X^+$,  generated by an $R$-word is finite. 
\end{lemma}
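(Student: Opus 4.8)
The plan is to run the iterative construction of $\Delta(r_w(i))$ exactly as defined above and to show, using the overlap hypotheses of type $(1)$, that it stabilizes after at most two iterative steps. Write $u$ for the $R$-word that overlaps itself, so that $u$ has a nonempty proper prefix that also occurs as a suffix, and write $v$ for the other $R$-word, which neither overlaps itself nor overlaps $u$. The construction starts from the (finite) linear automaton of $w$, and the only operation performed is sewing on a segment; by Lemma \ref{1} no folding ever occurs and by Lemma \ref{2} every $\Delta_n(r_w(i))$ is deterministic. The key local observation is an overlap test: a new unsaturated segment labeled by an $R$-word $\rho$ can start at an interior vertex of a segment $s$ sewn at the previous step only if a prefix of $\rho$ equals a suffix of $s$, and can terminate at such a vertex only if a suffix of $\rho$ equals a prefix of $s$. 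Since no $R$-word is a factor of another, in either case $\rho$ and $s$ must overlap in the sense allowed by the presentation.

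First I would treat the case $r=u$. The first step sews a single segment labeled by $v$ onto the $i$-th occurrence of $u$. By the overlap test an unsaturated segment emanating from the interior of this sewn $v$ would require either a self-overlap of $v$ or an overlap of $v$ with $u$, both of which are forbidden in type $(1)$. Hence no further elementary $\mathscr{P}$-expansion is possible and the construction halts with $\Delta(u_w(i))=\Delta_1(u_w(i))$, a finite graph obtained from a finite linear automaton by sewing on one finite segment.

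The case $r=v$ is the substantive one. The first step sews a single segment labeled by $u$, say $P_0$. By the overlap test the interior of $P_0$ can spawn unsaturated segments labeled only by $u$ (arising from the equality of the proper prefix and suffix of $u$); no $v$-segment can appear because $v$ does not overlap $u$. By determinism (Lemma \ref{2}) each such segment is determined by the interior vertex of $P_0$ at which it meets $P_0$, so there are at most $2(|u|-1)$ of them, in particular finitely many. At the second step each of these is completed by sewing on a segment labeled by $v$, the partner of $u$, never a further copy of $u$. Applying the overlap test to these newly sewn $v$-segments, and arguing exactly as in the case $r=u$, no unsaturated segment can touch their interiors. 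The construction therefore halts at $\Delta_2(v_w(i))$, and since only finitely many finite $v$-segments were added, $\Delta(v_w(i))=\Delta_2(v_w(i))$ is finite.

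The step I expect to be the crux, and which the full write-up must justify with care, is the assertion that after the first step every sewn segment is labeled by $v$, so that the self-overlaps of $u$ cannot chain into unboundedly many generations. This rests on two facts: an unsaturated segment is always completed by sewing its partner, so an unsaturated $u$-segment is completed by a $v$ and never by another $u$; and, by the definition of the construction, a fresh overlap can be detected only from the interior of a segment sewn at the immediately preceding step. Since every segment produced after the first step carries the label $v$, which admits no overlaps, nothing sewn after the first step can spawn anything further, and the embeddings $\phi_n$ of Proposition \ref{3}$(ii)$ then make stabilization of the sequence $\{\Delta_n(r_w(i))\}$ after the second step equivalent to finiteness of $\Delta(r_w(i))$.
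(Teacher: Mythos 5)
Your proposal is correct and follows essentially the same route as the paper's proof: sew on the partner word, observe that a sewn $v$-segment can spawn nothing because $v$ admits no overlaps, and that a sewn $u$-segment can spawn only finitely many unsaturated $u$-segments whose completing $v$-segments then terminate the process, so the construction stabilizes after at most two iterative steps. Your explicit bound $2(|u|-1)$ and the appeals to Lemmas \ref{1}, \ref{2} and Proposition \ref{3} are harmless refinements of the same argument.
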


\begin{proof} Without loss of generality, we assume that $u$ overlaps with itself and $v$ neither overlaps with itself nor with $u$. 

Obviously, for any $w\in X^+$ that doesn't contain any $R$-word as its subword, $S\Gamma(w)$ is finite. So, we consider an arbitrary positive word $w$, such that $w$ contains some $R$-words as its subwords. We construct the linear automaton of $w$, $(\alpha_0,\Gamma_0(w),\beta_0)$. 

If $w$ contains $u$ as its subword, we apply the elementary $\mathscr{P}$-expansion on the segment labeled by $u$ of $(\alpha_0,\Gamma_0(w),\beta_0)$ by sewing on a path labeled by $v$ from the initial vertex to the terminal vertex of the segment labeled by $u$. Since $v$ neither overlaps with itself nor with $u$, the first application of elementary $\mathscr{P}$-expansion does not create any new unsaturated segment labeled by an $R$-word. So, the subgraph of $S\Gamma(w)$ generated by $u$ stops to grow at the first iterative step. Hence, the subgraph of $S\Gamma(w)$ generated by $u$ is finite.

If $w$ contains $v$ as its subword, we apply elementary $\mathscr{P}$-expansion  on the segment labeled by $v$ of $(\alpha_0,\Gamma_0(w),\beta_0)$ by sewing on a path labeled by $u$ from the initial vertex to the terminal vertex of the segment labeled by $v$. Since $u$ overlaps with itself, we may find a finite number of unsaturated segments labeled by $u$ starting from a vertex of $(\alpha_0,\Gamma_0(w),\beta_0)$ that lies before the initial vertex of the segment labeled by $v$ and terminating at an interior vertex of the segment labeled by $u$ or starting from an interior vertex of the segment labeled by $u$ and terminating at a vertex of $(\alpha_0,\Gamma_0(w),\beta_0)$ that lies after the terminal vertex of the segment labeled by $v$. We apply elementary $\mathscr{P}$-expansions on all of these new unsaturated segments by sewing on segments labeled by $v$. Since $v$ does not overlap with itself or with $u$, the second applications of elementary $\mathscr{P}$-expansions do not create any new unsaturated segments labeled by some $R$-words. Hence, the subgraph of $S\Gamma(w)$ generated by $v$ remains finite. 

\end{proof}

We say that an Adian inverse semigroup $M=\langle X|u=v\rangle$ is of type $(2)$ when the presentation $\langle X|u=v\rangle$ is of type $(2)$ of Proposition \ref{types}.

\begin{proposition}\label{P1} Let $M=Inv\langle X|u=v\rangle$ be an Adian inverse semigroup of type $(2)$. Then all the segments sewn on in an iterative step of the construction of a subgraph of $S\Gamma(w)$, for all $w\in X^+$, generated by an $R$-word, are labeled by the same $R$-word. 
\end{proposition}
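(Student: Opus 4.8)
The plan is to argue by induction on the index $n$ of the iterative step, with the whole argument resting on a single overlap lemma that exploits the defining feature of a type $(2)$ presentation: each of $u$ and $v$ overlaps with itself, neither overlaps with the other, and neither is a subword of the other.

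First I would isolate the following claim. Suppose a segment labeled by an $R$-word $\rho\in\{u,v\}$ has been sewn on, and let $r'$ be an unsaturated segment labeled by an $R$-word that shares an edge with this $\rho$-segment and either starts from or terminates at an \emph{interior} vertex of it. Then $r'$ must be labeled by $\rho$ itself. To see this, write $\rho=\rho_1\cdots\rho_m$ with consecutive vertices $p_0,\dots,p_m$, and let $p_k$ (with $1\le k\le m-1$) be the interior vertex in question. By Lemma \ref{2} the graph $\Delta_n(r_w(i))$ is deterministic, so at $p_k$ there is a unique outgoing positive edge ($\rho_{k+1}$) and a unique incoming positive edge ($\rho_k$). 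If $r'$ \emph{starts} at $p_k$, determinism forces $r'$ to read along $\rho$, so a nonempty proper prefix of $r'$ coincides with the proper suffix $\rho_{k+1}\cdots\rho_m$ of $\rho$; dually, if $r'$ \emph{terminates} at $p_k$, a nonempty proper suffix of $r'$ coincides with the proper prefix $\rho_1\cdots\rho_k$ of $\rho$. In either case, if $r'$ were the \emph{other} $R$-word, this coincidence would be a prefix--suffix overlap between $u$ and $v$, which is impossible in a type $(2)$ presentation; and $r'$ cannot be a proper subword of $\rho$ since no $R$-word is a subword of the other. Hence $r'=\rho$.

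With this lemma in hand, I would run the induction. At the first step the construction sews on exactly one segment, namely the side $s$ of the relation $(r,s)$ opposite to the generating word $r$, so the base case is immediate. For the inductive step, assume that every segment sewn on at the $n$-th step carries the same label $\sigma_n\in\{u,v\}$. By the description of the construction, the $(n+1)$-st step only expands unsaturated segments that start from or terminate at an interior vertex of a segment sewn on at the $n$-th step; each such segment therefore shares an edge with a $\sigma_n$-segment at an interior vertex, and the overlap lemma forces its label to be $\sigma_n$. Applying the elementary $\mathscr{P}$-expansion then sews on the opposite side of the relation, so every segment introduced at the $(n+1)$-st step is labeled by the single $R$-word opposite to $\sigma_n$. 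This completes the induction and shows, incidentally, that the labels of the sewn-on segments alternate between $u$ and $v$ from one step to the next while remaining constant within each step.

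The main obstacle is the overlap lemma, and specifically making the determinism argument airtight at the interior vertices. The point to verify carefully is that at the moment the $n$-th step completes, the interior vertices of the freshly sewn $\sigma_n$-segments carry only the two edges of $\sigma_n$ (no folding occurs, by Lemmas \ref{1} and \ref{2}), so that reading an $R$-word forward or backward through such a vertex is genuinely forced to follow $\sigma_n$ and thereby produces an honest prefix--suffix coincidence rather than some incidental configuration. One must also keep the two reading directions (starting versus terminating) separate and confirm in each that the matched factor is a \emph{proper}, nonempty border, so that the dichotomy ``self-overlap of $\rho$ versus cross-overlap of $u$ and $v$'' genuinely applies; the hypothesis that no $R$-word is a subword of the other is exactly what rules out the degenerate case in which $r'$ would coincide with a proper factor of $\rho$.
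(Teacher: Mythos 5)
Your proof is correct and follows essentially the same route as the paper's: induction on the iterative step, with the base case being the single segment sewn on at step one and the inductive step resting on the observation that, for a type $(2)$ presentation, any new unsaturated segment meeting a freshly sewn $\rho$-segment at an interior vertex must itself be labeled by $\rho$. The paper asserts this last point in a single sentence, whereas you expand it into an explicit determinism/border argument using Lemmas \ref{1} and \ref{2}; that is a welcome elaboration rather than a departure.
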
 
\begin{proof}  If $w\in X^+$ doesn't contain any $R$-word as it's subword, then $S\Gamma(w)$ is finite. So, there is nothing to prove in this case. 

Let $w\in X^+$ be an arbitrary word that contains some $R$-words as its ssubwords. To prove the above statement we use mathematical induction on the number of iterative steps involved in the construction of a subgraph of $S\Gamma(w)$, generated by an $R$-word.  Without loss of generality, we assume that $v$ is a subword of $w$.  We construct the linear automation of $w$, $(\alpha_0,\Gamma_0(w), \beta_0)$. At the first iterative step, we sew on a segment labeled by $u$ from the initial vertex to the terminal vertex of the segment labeled by $v$ of $(\alpha_0,\Gamma_0(w), \beta_0)$. So, the above statement is obviously true for the first iterative step. 

We assume that the above statement is true for some $n\in\mathbb{N}$, where $n\geq2$. That is, all the segments that are sewn on at the $n$-th iterative step are labeled by the same $R$-word. Since both the $R$-words overlap with themselves and there is no overlap between them, if some of the segments sewn on at the $n$-th iterative step create some new unsaturated segments, they must be labeled by the same $R$-word that labels all the segments sewn on at the $n$-th iterative step. So, all the segments sewn on at the $(n+1)$-st iterative step, are labeled by the other $R$-word.  
\end{proof}

We can distribute the Adian presentations of type $(2)$ into the following two categories.

$(a)$. No proper subword of one $R$-word is a proper subword of the other $R$-word.

$(b)$. A proper subword of one $R$-word is a proper subword of the other $R$-word. 

For instance, the presentation $\langle a,b,c|aba=c^2\rangle$ is of type $(2a)$, and the presentation $\langle a,|aba=b^3\rangle$ is of type $(2b)$ as $b$ is common between both the $R$-words.





\begin{proposition} Let $M=Inv\langle X|u=v \rangle$ be an Adian inverse semigroup of type $(2a)$.  If every third generation region of a subgraph of $S\Gamma(w)$,  for all $w\in X^+$, generated by an $R$-word uses an edge of the linear automaton of $w$, then every higher generation region of the subgraph uses an edge of the linear automaton of $w$. 
\end{proposition}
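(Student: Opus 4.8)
The plan is to argue by induction on the generation number $n$, taking the hypothesis on third generation regions as the base case $n=3$ and showing that if every generation-$n$ region $(n\ge 3)$ uses an edge of the linear automaton $(\alpha_0,\Gamma_0(w),\beta_0)$, then so does every generation-$(n+1)$ region. Throughout I would exploit two features special to type $(2a)$. First, Proposition \ref{P1} tells us that the $R$-words sewn at successive iterative steps alternate, so that steps of the same parity sew the same $R$-word and the whole construction is periodic with period two. Second, type $(2a)$ forces $u$ and $v$ to have disjoint alphabets: both $R$-words self-overlap, hence each has length at least two, and a common letter would be a common proper subword, contradicting $(2a)$. By Lemma \ref{1} no foldings ever occur, so the subgraph is assembled freely by elementary $\mathscr{P}$-expansions and each region is glued to the previous generation along a border determined solely by the self-overlap of the word just sewn.

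First I would fix the local description of a single transition. A generation-$(n+1)$ region arises from an unsaturated segment, labeled by some $R$-word $\rho$, that starts from or terminates at an interior vertex $m$ of a segment sewn at the $n$-th step; by Proposition \ref{P1} this $\rho$ is the same $R$-word that was sewn at step $n$, and its occurrence at $m$ is produced by the self-overlap of that word. Reading $\rho$ from $m$ therefore runs first along the generation-$n$ segment, through the border part of $\rho$, and must then be continued past the end of that segment. The induction hypothesis places the generation-$n$ segment on the linear automaton $L$, so the continuation begins at a vertex of $L$.

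The heart of the argument is to show that this continuation is forced to traverse at least one edge of $L$, so that the generation-$(n+1)$ region indeed uses an $L$-edge. Here I would use the disjoint-alphabet property: past the end of the generation-$n$ segment, the only edges carrying letters of $\rho$'s alphabet are those of $L$ itself, since the edges internal to previously sewn regions of the opposite parity are labeled by the other, disjoint, alphabet and so cannot prolong the reading of $\rho$; and by the directionality of the graph (Proposition \ref{3}) the reading cannot double back. Periodicity then closes the loop: the transition $n\to n+1$ is, letter for letter, the shift by one period of $\rho$ of the transition $(n-2)\to(n-1)$, which by the (strong) induction hypothesis already ran along $L$, so matching the two configurations under this shift transfers the property of lying along $L$ from the earlier transition to the current one. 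The base of this matching is supplied by the generation-3 hypothesis together with the fact that generation 1 sits on $L$ by construction.

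I expect the main obstacle to be exactly the step of excluding a reading of $\rho$ that peels off $L$ by travelling along an interior, previously sewn segment rather than along $L$. Without the generation-3 hypothesis the strip of regions could in principle drift away from the linear automaton once it acquires enough width, and it is precisely this hypothesis, propagated by the period-two shift, that keeps every continuation anchored to $L$. Making the shift isomorphism literal, so that the phrase \emph{uses an edge of $L$} is genuinely transported from one period to the next, will require careful bookkeeping of which vertices of $L$ are reached, together with a separate check that at each interior vertex $m$ only one new unsaturated segment appears in each direction, so that the family of regions remains a single strip and the induction does not have to contend with branching.
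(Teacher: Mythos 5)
Your proposal is correct and follows essentially the same route as the paper: induction on the generation number with the third-generation hypothesis as the base case, the alternation of $R$-words from Proposition \ref{P1}, and the type-$(2a)$ condition (no common proper subword, which you sharpen to the valid observation that $u$ and $v$ have disjoint alphabets) to rule out a new unsaturated segment continuing past the terminal vertex of the generation-$n$ segment into the opposite-parity region, forcing it instead to extend backward from the endpoint lying on the linear automaton. The ``period-two shift'' framing is an embellishment rather than a different argument, and the dichotomy you (and the paper) rely on at the transition step is justified to about the same level of detail in both.
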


\begin{proof} If $w\in X^+$ does not contain any $R$-word as its subword, then $S\Gamma(w)$ is finite. So, there is nothing to prove in this case.

Let $w\in X^+$ be an arbitrary word that contains some $R$-words as its subwords. We apply mathematical induction on the number of iterative steps involved in the construction of a subgraph of $S\Gamma(w)$, generated by an $R$-word. 

 We assume that every third generation region of the subgraph of $S\Gamma(w)$, generated by an $R$-word, uses an edge of the linear automaton of $w$, denoted by $(\alpha_0, \Gamma_0(w), \beta_0)$. For the base case of our induction, we show that every fourth generation region of the subgraph uses an edge of $(\alpha_0, \Gamma_0(w), \beta_0)$.
 
 Since every third generation region uses an edge of $(\alpha_0, \Gamma_0(w), \beta_0)$, either the initial vertex or the terminal vertex of all the segments sewed on at the third iterative step, lies on $(\alpha_0, \Gamma_0(w), \beta_0)$. Without loss of generality, we assume that the initial vertex of a segment, labeled by an $R$-word, and  sewed on at the third iterative step, lies on $(\alpha_0, \Gamma_0(w), \beta_0)$. We denote this segment by $s_3$. 
 Since both the $R$-words overlap with themselves and there is no overlap between them, if the third iterative step creates some new unsaturated segments, then they are labeled by the same $R$-word as $s_3$.
 
  The terminal vertex of $s_3$ lies on a segment that was sewn on at the second iterative step and labeled by the other $R$-word. If $s_3$ creates some new unsaturated segments, then these new unsaturated segments either start from a vertex of $(\alpha_0, \Gamma_0(w), \beta_0)$ which lies before the initial vertex of $s_3$ and terminates at an interior vertex of $s_3$, or they start from an interior vertex of $s_3$ and passes through a segment labeled by the other $R$-word and sewed on at the second iterative step. The later case is not possible, as no proper subword of one $R$-word is a proper subword of the other $R$-word. Therefore, if a fourth generation region exists, then it also uses an edge of $(\alpha_0, \Gamma_0(w), \beta_0)$.  

We assume that every $k$-th generation region uses an edge of $(\alpha_0, \Gamma_0(w), \beta_0)$, for some $k>4$. Then either the initial vertex or the terminal vertex of every segment that is sewed on at the $k$-th iterative step lies at $(\alpha_0, \Gamma_0(w), \beta_0)$.
Without loss of generality, we assume that the initial vertex of a segment labeled by an $R$-word and sewed on at the $k$-th iterative step lies on $(\alpha_0, \Gamma_0(w)$ $, \beta_0)$. We denote this segment by $s_k$. 
Since both the $R$-words overlap with themselves and there is no overlap between both of them, if the $k$-th iterative step creates some new unsaturated segments, then they will be labeled by the same $R$-word as $s_k$. Moreover, these new unsaturated segments either start from a vertex of $(\alpha_0, \Gamma_0(w), \beta_0)$ which lies before the initial vertex of $s_k$ and terminates at an interior vertex of $s_k$ or they start from an interior vertex of $s_k$ and passes through the segment labeled by the other $R$-word that was sewed on at the $(k-1)$-st iterative step. The latter case is not possible as no proper subword of one $R$-word is a proper subword of the other $R$-word. Therefore, if the $(k+1)$-st generation region exists, then it also uses an edge of $(\alpha_0, \Gamma_0(w), \beta_0)$.  

\end{proof}

We call a vertex $\delta$ of $S\Gamma(w)$, for some $w\in X^+$, to be a \textit{special vertex}, if $\delta$ is the terminal vertex of a segment labeled by an $R$-word and the initial vertex of a segment labeled by the same $R$-word. 

\begin{proposition}\label{P4} If a word $t\in X^+$ overlaps with itself, then $t$ is in one of the following forms.

\begin{enumerate}
\item $t\equiv x^n$, where $x\in X^+$, and $n\geq 2$. 

\item $t\equiv xsx$, where $x,s\in X^+$, where $x$ is the maximal prefix of $t$ that is also a suffix of $t$. 

\item $t\equiv xsxsx$, where $x,s\in X^+$, where $xsx$ is the maximal prefix of $t$ that is also a suffix of $t$
\end{enumerate}

\end{proposition}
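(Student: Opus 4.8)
The plan is to organize the argument around the longest nonempty proper border of $t$. Since $t$ overlaps with itself, there is a nonempty word that is simultaneously a proper prefix and a proper suffix of $t$; let $y$ be the longest such word, and set $b=|y|$ and $n=|t|$, so $1\le b\le n-1$. Recording the two occurrences of $y$ as $t\equiv y z\equiv z'y$, the whole proof splits according to whether these occurrences are disjoint, i.e. $2b\le n$, or overlap, i.e. $2b>n$. Throughout I would use the standard correspondence between borders and periods of a word (a border of length $b$ corresponds to a period of length $n-b$), so that the maximality of $y$ is the same as minimality of the period $p:=n-b$.

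First I would treat the disjoint case $2b\le n$. Here the prefix copy $t[1\ldots b]$ and the suffix copy $t[n-b+1\ldots n]$ share no edge, so $t\equiv y\,s\,y$ where $s$ is the central factor of length $n-2b\ge 0$. If $s$ is empty then $n=2b$ and $t\equiv y^2$, which is form $(1)$ with $n=2$. If $s$ is nonempty I set $x:=y$ to obtain form $(2)$; since $2b<n$ forces the minimal period $p=n-b>b$, the word $t$ cannot be a proper power in this subcase, and the maximality of $y$ gives exactly the required condition that $x$ is the maximal prefix that is also a suffix.

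The heart of the matter is the overlapping case $2b>n$. Now the two occurrences of $y$ genuinely share a block, and equating them propagates the period $p=n-b<b$ across all of $t$; this is the point where I would invoke the overlap (Fine and Wilf type) equation to justify that $t$ is a prefix of $u^{\infty}$ for $u:=t[1\ldots p]$. Because $p$ is the minimal period, $u$ is primitive, and I would write $t\equiv u^{k}u'$ with $u'$ a proper prefix of $u$ and $k\ge 2$ (as $p<n/2$). If $u'$ is empty then $t\equiv u^{k}$ is form $(1)$. If $u'$ is nonempty and $k=2$, then writing $u\equiv x\,s$ with $x:=u'$ gives $t\equiv (xs)^2x\equiv xsxsx$, which is form $(3)$, and one checks that $xsx$ is recovered as the maximal border of $t$.

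I expect the genuine obstacle to be controlling the repetition count $k$ in the overlapping case: the three advertised normal forms correspond precisely to $u'$ empty (form $(1)$), the disjoint regime with $x=y$ (form $(2)$), and $k=2$ with nonempty remainder (form $(3)$), so the crux is to show that a nonempty remainder $u'$ cannot be accompanied by $k\ge 3$ under the operative notion of self-overlap. Making this rigorous requires returning to the precise definition of ``overlaps with itself'' used earlier, and combining the minimality of the period $p$ with the maximality of the border $y$ to rule out (or reabsorb into form $(1)$) words whose primitive root repeats three or more times with a proper remainder. This reconciliation step is where I would concentrate the care, since it is exactly what separates the stated classification from the weaker assertion that $t$ merely admits some bordered decomposition.
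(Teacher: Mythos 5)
Your decomposition is essentially the paper's, made sharper: the paper likewise splits on whether the two occurrences of the maximal border $y$ of $t$ are disjoint (giving $t\equiv ysy$ or $t\equiv y^2$) or overlap, and in the overlapping case tries to extract the forms $x^n$ and $xsxsx$; your translation into the minimal period $p=|t|-|y|$ and the factorization $t\equiv u^{k}u'$ with $u$ primitive of length $p$ is the right way to organize that case, and your $k=2$ analysis (taking $x:=u'$ and $xs:=u$) correctly recovers form $(3)$ together with the maximality of $xsx$. However, the gap you flag at the end is not a technicality that more care will close: the subcase $k\ge 3$ with $u'$ nonempty is genuinely inhabited and contains counterexamples to the statement as written. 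Take $t\equiv abababa\equiv (ab)^3a$. It overlaps with itself; its maximal border is $ababa$, so form $(2)$ would require $|t|=2\cdot 5+|s|\ge 11$; form $(3)$ would require $2|x|+|s|=5$ and $3|x|+2|s|=7$ simultaneously, which forces $|x|+|s|=2$ and hence $2|x|+|s|\le 3<5$; and $|t|=7$ is prime, ruling out form $(1)$. So no combination of the minimality of $p$ with the maximality of $y$ can rule out or reabsorb $k\ge 3$: the classification is simply missing the forms $(xs)^{k}x$ for $k\ge 3$ (equivalently, forms $(2)$ and $(3)$ become correct only if the maximality requirement on the border is dropped).

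For comparison, the paper's own proof stalls at exactly the point you isolated. In the overlapping case it takes $x$ to be the maximal border of $y$, asserts $y\equiv xsx$ with $s\in X^{+}$, and dismisses the failure of this decomposition with the claim that ``otherwise $t$ will be of the form $x^n$.'' For $t\equiv abababa$ one has $y\equiv ababa$, whose maximal border is $aba$, so $y\equiv xsx$ is impossible on length grounds while $t$ is not a proper power; the assertion is false there. Your instinct to concentrate the effort on reconciling a nonempty remainder with $k\ge 3$ was therefore exactly right, but that step cannot be completed for the proposition as stated, and any proof (yours or the paper's) must either add the missing normal forms or restrict the class of words $t$ under consideration.
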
 

\begin{proof}  
If $t\equiv x^n$, for some $x\in X^+$ and $n\geq 2$, then clearly $t$ overlaps with itself. If $t\not\equiv x^n$, for some $x\in X^+$, then there are two possibilities. 

\begin{enumerate}
\item The maximal prefix $x\in X^+,$ of $t$, that is also a suffix of $t$, doesn't overlap with itself in $t$. 

\item The maximal prefix $y\in X^+$, of $t,$ that is also a suffix of $t$, overlaps with itself in $t$. 
\end{enumerate} 

In the first case, $t\equiv xsx$, for some $x,s\in X^+$. 

In the second case, since $y$ overlaps with itself in $t$, a prefix of $y$ is also a suffix of $y$. Let $x\in X^+$ be the maximal prefix of $y$ that is also a suffix of $y$. Then $y\equiv xsx$, for some $x,s\in X^+$. We observe that $s\not\equiv \epsilon$ or $s\not\equiv x$. Otherwise, $t$ will be of the form $x^n$. Hence, $t\equiv xsxsx$, for some $x,s\in X^+$. 

\end{proof}

\begin{proposition}\label{P5} Let $M=Inv\langle X|u=v \rangle$ be an Adian inverse semigroup of type $(2a)$. A segment of $S\Gamma(w)$, for some $w\in X^+$,  labeled by $u^2$ contains a subsegment labeled by $u$ passing through the special vertex of the segment $u^2$ if and only if $u\equiv x^n$, for some $x\in X^+$ and $n\geq 2$. 
\end{proposition}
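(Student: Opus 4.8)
The plan is to translate the graph-theoretic assertion into a purely combinatorial statement about the word $u$, and then settle it with the classical commutation lemma for words.

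First I would fix notation for the segment labeled by $u^2$. Write $u \equiv u_1 u_2 \cdots u_m$ with $m = |u|$, and let $v_0, v_1, \ldots, v_{2m}$ be the consecutive vertices of the segment labeled by $u^2$, so that the edges read $u$ from $v_0$ to $v_m$ and $u$ again from $v_m$ to $v_{2m}$; the special vertex is $\delta = v_m$. By Lemma \ref{1} no folding occurs in the construction of $S\Gamma(w)$, and by Lemma $2$ of \cite{AD} the graph $S\Gamma(w)$ carries no positively labeled cycle, so the vertices $v_0, \ldots, v_{2m}$ are pairwise distinct and every subsegment of the $u^2$-segment reads exactly the corresponding factor of the word $u^2$. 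A subsegment labeled by $u$ that passes through $\delta = v_m$ as an interior vertex must therefore run from $v_i$ to $v_{i+m}$ for some index $i$ with $0 < i < m$, and the word it reads is the factor of $u^2$ of length $m$ beginning at position $i+1$, namely the cyclic rotation $u_{i+1}\cdots u_m\, u_1 \cdots u_i$. Hence such a subsegment exists if and only if some nontrivial cyclic rotation of $u$ equals $u$, and the proposition reduces to the statement that $u$ admits a nontrivial rotation equal to itself if and only if $u \equiv x^n$ for some $x \in X^+$ and $n \geq 2$.

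For the implication assuming $u \equiv x^n$, I would exhibit the subsegment explicitly: taking $i = |x|$, the factor of $u^2 \equiv x^{2n}$ of length $m$ starting at position $|x|+1$ is again $x^n \equiv u$ by periodicity, and since $n \geq 2$ we have $|x| < m < |x|+m$, so the subsegment from $v_{|x|}$ to $v_{|x|+m}$ passes through $\delta = v_m$. This direction is routine.

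The substantive step is the converse. Suppose the rotation $u_{i+1}\cdots u_m\, u_1 \cdots u_i$ equals $u$ for some $0 < i < m$. Writing $p \equiv u_1 \cdots u_i$ and $q \equiv u_{i+1}\cdots u_m$, we have $u \equiv pq$ while the rotation is $qp$, so the hypothesis is $pq \equiv qp$ with $p$ and $q$ both nonempty. By the classical commutation lemma for words (two words commute in the free monoid exactly when they are powers of a common word) there exist $x \in X^+$ and integers $a,b \geq 1$ with $p \equiv x^a$ and $q \equiv x^b$, whence $u \equiv x^{a+b}$ with $n := a+b \geq 2$. The main obstacle is precisely this commutation lemma. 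If a self-contained argument is preferred, one can argue directly from rotation invariance: the cyclic shift by $i$ that fixes $u$ generates the cyclic shift by $d = \gcd(i,m)$, which therefore also fixes $u$, so $u \equiv y^{m/d}$ where $y$ is the prefix of $u$ of length $d$, and since $0 < i < m$ we have $d < m$ and $m/d \geq 2$. Alternatively one may route through Proposition \ref{P4}: a nontrivial rotation equal to $u$ forces $u$ to overlap with itself, so $u$ is of the form $x^n$, $xsx$, or $xsxsx$, and a short check of borders shows that only the form $x^n$ admits a rotation equal to $u$. I expect the cleanest writeup to invoke the commutation lemma directly, with the determinism and acyclicity of $S\Gamma(w)$ doing the remaining work of guaranteeing that reading along subpaths faithfully recovers factors of $u^2$.
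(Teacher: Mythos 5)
Your proof is correct, but it takes a genuinely different route from the paper's. You reduce the geometric hypothesis to the purely combinatorial statement that some nontrivial cyclic rotation $qp$ of $u\equiv pq$ coincides with $u$, and then finish in one stroke with the commutation lemma ($pq\equiv qp$ with $p,q$ nonempty forces $p$ and $q$ to be powers of a common word, whence $u\equiv x^{a+b}$ with $a+b\geq 2$). The paper instead routes through Proposition \ref{P4}: assuming $u\not\equiv x^n$, it writes $u\equiv xsx$ or $u\equiv xsxsx$ with a maximal border, and runs a case analysis on where the alleged subsegment labeled by $u$ can begin relative to that border, deriving in each subcase either that $u$ is a proper power after all or that a longer border exists, contradicting maximality. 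Your approach buys brevity and robustness: it needs no classification of self-overlaps, and it cleanly handles a point the paper treats loosely --- in the paper's first subcase the equation that actually arises is $xxs\equiv xsx$, i.e.\ $xs\equiv sx$, which yields a common power of $x$ and $s$ rather than literally $x\equiv s$; your invocation of the commutation lemma is exactly the fact needed to close that step. What the paper's approach buys is self-containment within its own machinery (Proposition \ref{P4} is already established and reused elsewhere) and no appeal to an external combinatorics-on-words result. Your preliminary reduction is also properly justified: determinism (Lemma \ref{1}) and the absence of positively labeled cycles guarantee that subpaths of the $u^2$-segment read the corresponding factors of the word $u^2$, and the requirement that the subsegment pass through the special vertex pins its starting index to $0<i<|u|$, which is precisely the nontriviality of the rotation. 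Either of your two fallback arguments (the $\gcd$ argument or the check against Proposition \ref{P4}) would also work.
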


\begin{proof} The converse of the above statement is obvious. Therefore, we only prove the direct statement.  

Let a segment of $S\Gamma(w)$, for some $w\in X^+$,  labeled by $u^2$ contains a subsegment labeled by $u$ passing through the special vertex, and $u\not\equiv x^n$, for some $x\in X^+$ and $n\geq 2$. Then by Proposition \ref{P4}, either $u\equiv xsx$, for some $x,s\in X^+$, where $x$ is the maximal prefix of $u$ that is also a suffix of $u$, or $u\equiv xsxsx$, for some $x,s\in X^+$, where $xsx$ is the maximal prefix of $u$ that is also a suffix of $u$. 

\textit{Case 1}; We assume that $u\equiv xsx$, for some $x,s\in X^+$, where $x$ is the maximal prefix of $u$ that is also a suffix of $u$. By hypothesis, there exists a subsegment of the segment $u^2$, that is labeled by $u$, and passing through the special vertex of  the segment $u^2$. If $u$ can be read starting from the  initial vertex of $x$ (where $x$ is the suffix of the first $u$ in the segment $u^2$), then $x\equiv s$. This implies that $u\equiv x^3$, which contradicts with our assumption that $u\not\equiv x^n$, for some $x\in X^+$ and $n\geq 2$. 

If $u$ can be read starting from a vertex that lies before the initial vertex of $x$ (where $x$ is the suffix of the first $u$ in the segment $u^2$), then a longer prefix of $u$ will be a suffix of $u$. This contradicts with the maximality of $x$. 
 
If $u$ can be read starting from a vertex that lies between the initial and terminal vertices of $x$ (where $x$ is the suffix of the first $u$ in the segment $u^2$), then again a longer prefix of $u$ will be a suffix of $u$. This contradicts with the maximality of $x$. 

\textit{Case 2}: We assume that $u\equiv xsxsx$, for some $x,s\in X^+$, where $xsx$ is the maximal prefix of $u$ that is also a suffix of $u$. By hypothesis, there exists a subsegment of the segment $u^2$, that is labeled by $u$, and passing through the special vertex of the segment $u^2$. If this subsegment starts from the initial vertex of $y$ (where $y\equiv xsx$ a  suffix of the first $u$ in the segment $u^2$), this is only possible when $x\equiv s$.  This implies that $u\equiv x^5$, for some $x\in X^+$, which contradicts with our assumption that  $u\not\equiv x^n$, for some $x\in X^+$ and $n\geq 2$. 

If the subsegment labeled by $u$ starts from a vertex that lies before the initial vertex of $y$, then a longer prefix of $u$ will be a suffix of $u$, which contradicts the maximality of of $xsx$. 

Similarly, if the subsegment labeled by $u$ starts from a vertex that lies between the initial and terminal vertices of $y$, then a longer prefix of $u$ will be a suffix of $u$, which contradicts the maximality of $xsx$. 

\end{proof}

We call a third generation region of a subgraph of $S\Gamma(w)$, for some $w\in X^+$, generated by an $R$-word,  that does not use an edge of the linear automaton of $w$, to be a \textit{special region}. 

\begin{proposition}\label{P6}
Let $M=Inv\langle X|u=v \rangle$ be an Adian inverse semigroup of type $(2a)$. A subgraph of $S\Gamma(w)$, for some $w\in X^+$, generated by an $R$-word, contains a special region if an only if $u\equiv x^n$ and $v\equiv y^m$, for some $x, y\in X^+$ and $n,m\geq 2$. 

\end{proposition}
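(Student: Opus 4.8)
The plan is to prove both implications by locating the \emph{overlap side} of the third generation region and applying Proposition \ref{P5} twice, once to the word $u$ and once to $v$; Proposition \ref{P5} is exactly the tool that detects a proper power through the special vertex of a doubled $R$-word. I first record the structural facts I will lean on. Since $M$ is of type $(2a)$, Proposition \ref{P1} tells us that the segments sewn on alternate between $u$ and $v$ from one iterative step to the next, so $\Delta(r_w(i))$ is a strip of regions $D_1,D_2,\dots$ in which consecutive regions share the overlap of an $R$-word. Throughout I use that $\Delta(r_w(i))$ embeds in $S\Gamma(w)$ (Proposition \ref{3}$(i)$), that it is deterministic and admits no foldings (Lemma \ref{1} and Lemma \ref{2}), and that it contains no positively labelled cycle (Lemma $2$ of \cite{AD}); this last fact is what forbids a single $R$-word segment from wrapping back onto itself.

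For the forward implication, suppose $\Delta(r_w(i))$ contains a special region $D_3$, and, by the $u\leftrightarrow v$ symmetry, assume it is generated from $u$, so that the overlap side of $D_3$ is labelled by $u$. Because $D_3$ uses no edge of the linear automaton, this $u$-side is built entirely from $u$-segments sewn at the second iterative step (the first generation $u$-side lies on the linear automaton and is excluded). A single second generation $u$-segment is a full copy of $u$; if the $u$-side of $D_3$ lay inside one such segment it would coincide with it and hence already be saturated. Thus it straddles a vertex $q$ common to two distinct second generation $u$-segments $\sigma_1,\sigma_2$, distinct because there are no positive cycles, with $\sigma_1$ ending and $\sigma_2$ beginning at $q$. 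Then $q$ is a special vertex, $\sigma_1\sigma_2$ reads $u^2$, and the $u$-side of $D_3$ is a subsegment labelled $u$ through $q$, so Proposition \ref{P5} forces $u\equiv x^n$. The segments $\sigma_1,\sigma_2$ are the sewn sides of two second generation regions whose $v$-sides end, respectively begin, at $q$ and each overlap the $v$-side of $D_1$; these two $v$-sides form a $v^2$-segment with special vertex $q$, inside which the $v$-side of $D_1$ is a subsegment labelled $v$ through $q$. A second application of Proposition \ref{P5} gives $v\equiv y^m$.

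For the converse, assume $u\equiv x^n$ and $v\equiv y^m$ with $n,m\ge 2$, and take $w\equiv y^{m-1}\,u\,y^{m-1}$, which contains the single $R$-word $u$, since the flanking runs $y^{m-1}$ are too short to contain $v$. Generating from $u$, step one sews the $v$-side $D_1$, reading $y^m$; choosing $q$ to be the interior vertex of $D_1$ reached after the first $y$, the flanking copies of $y$ allow $v$ to be read terminating at $q$ and beginning at $q$, producing two second generation regions $D_2^{L},D_2^{R}$ whose sewn $u$-sides meet at $q$. Because $u\equiv x^n$ is a proper power, Proposition \ref{P5} guarantees a subsegment labelled $u$ through the special vertex $q$ of the resulting $u^2$-segment; this subsegment is an unsaturated $u$-segment lying entirely on the two sewn $u$-sides, so step three sews a $v$-side onto it and creates a third generation region touching no edge of the linear automaton, that is, a special region.

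The main obstacle I anticipate is the geometric bookkeeping in the forward implication: showing rigorously that the $u$-side of $D_3$ must straddle exactly one junction $q$ of two second generation $u$-segments, and that this same vertex $q$ is simultaneously the special vertex of the $v^2$-segment carrying the $v$-side of $D_1$ as its shifted copy. Both points rest on determinism together with the absence of positive cycles, and on a careful use of Proposition \ref{3}$(iv)$--$(v)$ to produce and locate the relevant positively labelled sub-paths. Once the local configuration around $q$ is pinned down, the two appeals to Proposition \ref{P5} are immediate, and the symmetric case (generating from $v$) is identical after interchanging the roles of $u$ and $v$.
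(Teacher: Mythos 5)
Your proof is correct and follows essentially the same route as the paper: the forward direction locates the $u$-side of the special region on a $u^2$-segment through a special vertex and the $v$-side of the first-generation region on the resulting $v^2$-segment, applying Proposition \ref{P5} twice, while the converse exhibits an explicit word of the form $y^{k}uy^{k}$. Your write-up is in fact somewhat more careful than the paper's, which delegates the straddling argument to a figure.
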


\begin{proof} The converse of the above proposition is easy to see. Let $u\equiv x^n$ and $v\equiv y^m$, for some $x, y\in X^+$ and $n,m\geq 2$. Then $S\Gamma(y^{p_1}uy^{q_1})$, where $p_1+q_1=n$, and $S\Gamma(x^{p_2}vx^{q_2})$, where $p_2+q_2=m$, contain a third generation region that does not use an edge of linear automatons of $y^{p_1}uy^{q_1}$ and $x^{p_2}vx^{q_2}$, respectively. 

Now we prove the direct statement. We assume that a subgraph os $S\Gamma(w)$, for some $w\in X^+$, generated by an $R$-word, contains a special region. Without loss of generality, we assume that one side of this special region that lies on the boundary of second generation regions of the subgraph, is labeled by $u$. Since $\langle X|u=v\rangle$ is a presentation of type $(2a)$, one side of the special region labeled by $u$ lies on a segment labeled by $u^2$  of the subgraph of $S\Gamma(w)$, generated by an $R$-word  (see Figure \ref{4a}). The segment labeled by $u^2$ contains a special vertex and the one side of the special region labeled by $u$ is passing through this special vertex. So, by Proposition \ref{P5}, $u\equiv x^n$, for some $x\in X^+$ and $n\geq 2$. The segment labeled by $u^2$ can only exist, if there is a segment labeled by $v^2$ after the first iterative step. The segment labeled by $v^2$ contains a special vertex that lies on the segment labeled by an $R$-word, and sewed on at the first iterative step. Since $\langle X|u=v\rangle$ is an Adian presentation of type $(2a)$, the segment sewed on at the first iterative step is labeled by $v$. The segment labeled by $v^2$ contains a segment labeled by $v$ that is passing through the special vertex. By Proposition \ref{P5}, $v\equiv y^m$, for some $y\in X^+$ and $m\geq 2$. 

\begin{figure}[h!]
\centering
\includegraphics[trim = 0mm 0mm 0mm 0mm, clip,width=2.8in]{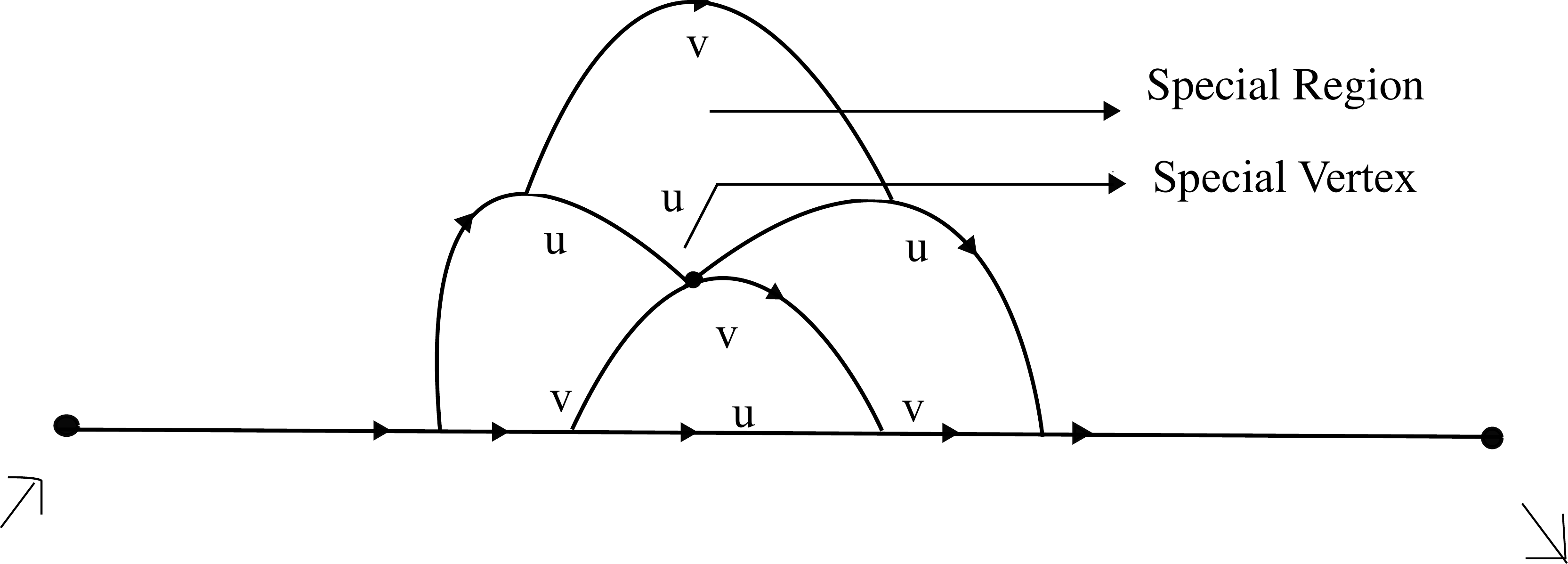}
\caption{}
\label{4a}
\end{figure}
 
\end{proof}

An Adian inverse semigroup $M=Inv\langle X|u=v\rangle$ is of type $(3)$ when the presentation $\langle X|u=v\rangle$ is of type $(3)$ of Proposition  \ref{types}.

\begin{proposition}\label{P8} Let $M=Inv\langle X|u=v\rangle$ be an Adian inverse semigroup of type $(3)$, such that no $R$-word overlaps with itself. Then there is no third generation region in a subgraph  of $w$ generated by an $R$-word, for all $w\in X^+$,  that does not use an edge of linear automaton of $w$. 
\end{proposition}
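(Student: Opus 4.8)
The plan is to reduce the statement to the single overlap that a type-$(3)$ presentation with no self-overlap can admit, and then to run Stephen's cascade of elementary $\mathscr{P}$-expansions for exactly three generations, checking at each step where the newly sewn segments land.

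First I would record the overlap structure. Since $\langle X|u=v\rangle$ is of type $(3)$ and no $R$-word overlaps itself, the only way two $R$-word segments can share an edge is the prescribed one: there is a nonempty word $p$ with $u\equiv p\,u_0$ and $v\equiv v_0\,p$ (so $p$ is a prefix of $u$ and a suffix of $v$), while no suffix of $u$ is a prefix of $v$. By Lemma \ref{2} every approximate graph $(\alpha_n,\Delta_n(r_w(i)),\beta_n)$ is deterministic, so two paths labelled by $R$-words that share an edge must agree on a maximal common block; the overlap combinatorics above then force that distinct $u$-segments are pairwise edge-disjoint, distinct $v$-segments are pairwise edge-disjoint, and a $u$-segment meets a $v$-segment only along a common $p$-block in which the suffix of the $v$-segment coincides with the prefix of the $u$-segment.

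Next I would run the construction. Assume first that the generating occurrence is a $u$-segment lying on the linear automaton $(\alpha_0,\Gamma_0(w),\beta_0)$ (the case of a $v$-generator is dual, reading everything right-to-left). The first expansion sews a $v$-segment $V_1$ parallel to it, so the suffix $p$ of $V_1$ terminates at a vertex $\eta_1$ of the linear automaton. Any unsaturated $R$-segment meeting an interior vertex of $V_1$ must, by the overlap structure above, be a $u$-segment $U_2$ whose prefix $p$ is identified with the suffix $p$ of $V_1$; reading past $\eta_1$, the determinism of the graph (Lemmas \ref{1} and \ref{2}, via $Star^o(\eta_1)$) forces the remaining factor $u_0$ of $U_2$ to run along the linear automaton, so the second-generation segment $U_2$ \emph{terminates on the linear automaton}. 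Sewing $V_2$ parallel to $U_2$ then produces a second-generation $v$-segment whose own suffix $p$ again ends at a linear-automaton vertex $\eta_2$. Repeating the analysis one more step, the unique third-generation segment $U_3$ shares only the $p$-block with $V_2$ and, past $\eta_2$, its factor $u_0$ is again forced onto the linear automaton. Hence the third-generation region bounded by $U_3$ and its newly sewn partner $V_3$ contains an edge of $(\alpha_0,\Gamma_0(w),\beta_0)$, i.e.\ it is not special. I would use Proposition \ref{3}$(iv)$ to realise these positively labelled continuations as genuine pieces of transversals, keeping the bookkeeping consistent with the directional structure of $\Delta(r_w(i))$.

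The main obstacle I expect is purely local: I must guarantee that each outward continuation really lands on the linear automaton and not on some earlier sewn segment, and that at each generation the overlapping partner is unique and attaches exactly at the claimed $p$-block. This is precisely where I would analyse $Star^o$ and $Star^i$ at the shared endpoints $\eta_1,\eta_2$ using determinism, together with the exclusion of self-overlaps and of any internal recurrence of $p$ that could spawn a second, spurious overlap. Once uniqueness and the linear-automaton landing are verified for the second and third generations, the claim follows for a $u$-generator, and the reversal duality settles the case in which the generator is a $v$-segment.
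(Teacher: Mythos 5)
Your proposal is correct and follows essentially the same route as the paper: normalize the type-$(3)$ overlap to a single prefix/suffix block, run the elementary $\mathscr{P}$-expansions generation by generation, and use determinism together with the absence of self-overlaps and of the reverse overlap to force the free factor of each new unsaturated segment onto the linear automaton. The only cosmetic difference is that the paper phrases the third-generation step as a contradiction (a special region would require a segment running from the second sewn-on segment back to the first, which would force a forbidden overlap), whereas you track the landing of $U_2$ and $U_3$ directly; the content is the same.
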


\begin{proof} Without loss of generality, we assume that $u\equiv xy$ and $v=yz$, where $x,y,z\in X^+$, and $y$ is the maximal prefix of $v$ that is also a suffix of $u$. We also assume that $u$ is a subword of $w$, for some $w\in X^+$. We show that the subgraph of $S\Gamma(w)$ generated by $u$, cannot contain a third generation region that does not use an edge of the linear automaton of $w$. Since $w$ is any arbitrary positive word, the above proposition is true for any positive word that contains $u$ as its subword. 

We construct the linear automaton of $w$. We sew on a segment labeled $v$ from the initial vertex to the terminal vertex of the segment labeled by $u$. This creates the first generation region of the subgraph of $S\Gamma(w)$ generated by $u$. Since $u$ and $v$ do not overlap with themselves, and no prefix of $u$ is a suffix of $v$, the second generation region of the subgraph generated by $u$ exists only if the word $u$ is followed by $x$ in $w$. So, we assume that the subword $u$ is followed by $x$ in $w$. Now, we can find an unsaturated segment labeled by $u$ starting from the initial vertex of $x$ and terminating at an interior vertex of the segment labeled by $v$ that was sewn on at the first step. We sew on a segment labeled $v$ from the initial vertex to the terminal vertex of this unsaturated segment, and create the second generation region of the subgraph generated by $u$. 

If we assume that the subgraph of $S\Gamma(w)$ generated by $u$ contains a third generation region that does not use an edge of the linear automaton of $w$, then there exist an unsaturated segment  labeled by an $R$-word that starts from an interior vertex of the segment labeled by $v$ (that was sewn at the second step) and terminates at a vertex of the segment labeled by $v$ (that was sewn at the first step). If this unsaturated segment is labeled by $v$, then this implies that $v$ overlaps with itself, which is a contradiction to the fact that no $R$-overlaps with itself. If this unsaturated segment is labeled by $u$, then it implies that a suffix of $v$ is also a prefix of $u$, which is also a contradiction. 

A dual argument can be used to show that a subgraph of $S\Gamma(w)$, for all $w\in X^+$, generated by $v$ cannot contain a third generation region that does not use an edge of the linear automation of $w$.

\end{proof}

\begin{proposition}\label{P9} Let $M=Inv\langle X|u=v\rangle$ be an Adian inverse semigroup of type $(3)$, such that no $R$-word overlaps with itself. Then every region of a subgraph of $S\Gamma(w)$, for all $w\in X^+$, generated by an $R$-word uses an edge of the linear automaton of $w$. 
\end{proposition}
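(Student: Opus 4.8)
The plan is to prove the statement by induction on the generation number of the regions appearing in the subgraph of $S\Gamma(w)$ generated by an $R$-word, using Proposition \ref{P8} as the decisive case. As in the proof of Proposition \ref{P8}, I would first assume without loss of generality that $u\equiv xy$ and $v\equiv yz$, where $x,y,z\in X^+$ and $y$ is the maximal prefix of $v$ that is also a suffix of $u$, and that $u$ is a subword of $w$; the argument for the subgraph generated by $v$ is dual. By Lemma \ref{1} the subgraph is built purely by elementary $\mathscr{P}$-expansions with no foldings, so every region is a bigon whose two sides are labeled by $u$ and $v$, and by Proposition \ref{3} the subgraph is directional with a unique source $\alpha$ and sink $\beta$, so that every vertex lies on a positively labeled transversal. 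This directionality is what lets me speak unambiguously of one vertex lying \emph{before} or \emph{after} another, which is the language in which the overlap analysis is carried out.

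For the base of the induction, the first generation region has one side labeled by $u$ lying directly on the linear automaton $(\alpha_0,\Gamma_0(w),\beta_0)$, so it uses an edge of the linear automaton; and the analysis in the proof of Proposition \ref{P8} shows that a second generation region exists only when $u$ is followed by $x$ in $w$, in which case its generating unsaturated segment is anchored at the initial vertex of that following $x$, a vertex of the linear automaton, so the second generation region also uses an edge of the linear automaton. Proposition \ref{P8} itself supplies the case $n=3$, since it asserts that there is no third generation region failing to use an edge of the linear automaton. For the inductive step I would assume that every region up to the $n$-th generation uses an edge of the linear automaton, for some $n\geq 3$, and consider an $(n+1)$-st generation region. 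Such a region is created by an elementary $\mathscr{P}$-expansion performed on an unsaturated segment labeled by an $R$-word that shares an edge with an $n$-th generation region. Since that $n$-th generation region uses an edge of the linear automaton, the local configuration of the $(n+1)$-st generation region relative to the $n$-th and $(n-1)$-st generation regions is combinatorially identical to the third generation configuration analyzed in Proposition \ref{P8}; repeating that analysis shows the generating unsaturated segment must be anchored on the linear automaton, and hence the new region uses an edge of the linear automaton.

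The hard part will be this inductive step, specifically ruling out the \emph{floating} configuration in which the unsaturated segment generating an $(n+1)$-st generation region runs from an interior vertex of the side sewn on at the $n$-th step to an interior vertex of the side sewn on at the $(n-1)$-st step without ever meeting the linear automaton. Exactly as in Proposition \ref{P8}, eliminating this possibility reduces to the type $(3)$ hypotheses: if the floating segment were labeled by $v$ it would force $v$ to overlap with itself, while if it were labeled by $u$ it would force a suffix of $v$ to be a prefix of $u$, and both are excluded since no $R$-word overlaps with itself and no suffix of $v$ is a prefix of $u$. The real care is in verifying that this reduction is legitimate for arbitrary $n$ and not merely for $n=3$, that is, that the growth of the alternating strip of regions creates no genuinely new configuration at higher generations. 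This is precisely where the directionality from Proposition \ref{3} and the bigon structure guaranteed by Lemma \ref{1} are needed, since together they force each new region to attach to its predecessor in the same rigid way, so that the overlap bookkeeping established at the third generation transfers verbatim to every later generation.
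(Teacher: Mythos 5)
Your proposal follows essentially the same route as the paper's proof: induction on the generation number with Proposition \ref{P8} supplying the base case, and the inductive step handled by the identical overlap analysis (a floating unsaturated segment labeled by $v$ would force $v$ to overlap itself, and one labeled by $u$ would force a prefix of $u$ to be a suffix of $v$, both excluded by the type $(3)$ hypotheses). The paper makes the rigidity of the alternating strip explicit by observing that only $v$-labeled segments are sewn on at each step, with each new segment anchored at a vertex of the linear automaton, which is exactly the point you flag as needing care; otherwise the two arguments coincide.
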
 

\begin{proof} Without loss of generality, we assume that $u\equiv xy$ and $v=yz$, where $x,y,z\in X^+$, and $y$ is the maximal prefix of $v$ that is also a suffix of $u$. Let $w$ be an arbitrary positive word that contains $u$ as its subword. We prove the above statement for a subgraph of $S\Gamma(w)$ generated by $u$, a similar argument can be used to prove the above statement for a subgraph generated by $v$. 

We use mathematical induction on the number of iterative steps involved in the construction of the subgraph generated by $u$. We have already seen in Proposition \ref{P8} that every region of the subgraph generated by $u$ up to the third generation uses an edge of the linear automaton of $w$. Which proves the base case for the mathematical induction (that is, every third generation region of the subgraph generated by $u$ uses an edged of the linear automaton of $w$). 

We assume that every region of the subgraph generated by $u$ up to $k$-th iterative step uses an edge of the linear automaton of $w$, for some $k\geq 3$. We show that the $(k+1)$-generation region of the subgraph generated by $u$ also uses an edge of linear automaton of $w$. 

We observe in the proof of Proposition \ref{P8} that we only sew on segment labeled by $v$ in the first three iterative steps involved in the construction the subgraph generated by $u$. So if we continue in the same manner up to the $k$-th iterative step, then the segment labeled by $v$ is sewn on at the $k$-th iterative step to form the $k$-th generation region of the subgraph generated by $u$. The initial vertex of this segment lies at the linear automaton of $w$ and the terminal vertex of this segment lies on the segment labeled by $v$ sewed on at the $(k-1)$-st iterative step. 

We cannot find an unsaturated segment labeled by an $R$-word that starts from an interior vertex of the segment labeled by $v$ (sewed on at the $k$-th iterative step) and terminating anywhere after passing through the terminal vertex of this segment labelled by $v$. Because if such a segment exists and it is labeled by $v$, then it means that $v$ overlaps with itself. This contradicts the fact that no $R$-word overlaps with itself. If such a segment is labeled by $u$ then it is only possible when a prefix of $u$ is also a suffix of $v$, which again a contradiction. So if the $k$-th iterative step creates a new unsaturated segment, then this segment will start from a vertex of the linear automaton of $w$ and after passing through the initial vertex of the segment $v$ (sewed on at the $k$-th iterative step) terminate at an interior vertex of this segment $v$. Hence the $(k+1)$-st generation region also uses an edge of the linear automaton of $w$. 

\end{proof}

\begin{theorem}\label{} Let $M=Inv\langle X|u=v\rangle$ be an Adian inverse semigroup, such that no $R$-word is a subword of the other $R$-word, and the relation $(u,v)$ is in one of the following forms:
\begin{enumerate} 

\item No $R$-word overlaps with itself or with the other $R$-word.

\item One of the $R$-words overlaps with itself, and the other $R$-word neither overlaps with itself nor with the former $R$-word. 

\item Both $R$-words overlap with themselves, there is no overlap between both the $R$-words, and at least one of the $R$-words is not of the form $x^n$, for some $x\in X^+$ and $n\geq 2$.  

\item Prefix of one $R$-word is a suffix of the other $R$-word, no suffix of the former $R$-word is a prefix of  the latter $R$-word, and no $R$-word overlaps with itself. 
\end{enumerate}

Then the word problem is decidable for $M$. \end{theorem}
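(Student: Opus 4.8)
The plan is to reduce the decidability of the word problem to the finiteness of Schützenberger graphs of positive words, and then to verify that finiteness separately in each of the four cases by invoking the structural results established above. By Theorem \ref{MT2}, the Schützenberger graph $S\Gamma(w)$ of every positive word $w$ is finite as soon as, for all $w\in X^+$, the subgraphs of $S\Gamma(w)$ generated by all the $R$-words are finite; by Theorem \ref{PMT}, this in turn forces $S\Gamma(w)$ to be finite for every $w\in(X\cup X^{-1})^+$. Once all Schützenberger graphs are finite they are effectively constructible by Stephen's iterative procedure, so Theorem \ref{Stephen's thm} reduces the question ``$u=v$ in $M$?'' to testing whether two finite Schützenberger automata are isomorphic, which is decidable. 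Hence it suffices, in each case, to prove that every generated subgraph $\Delta(r_w(i))$ is finite.

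For the first two cases this is already in hand. In case $(1)$, where no $R$-word overlaps with itself or with the other, Lemma \ref{L2} shows directly that $S\Gamma(t)$ is finite for every $t\in X^+$, so in particular every generated subgraph is finite. Case $(2)$, in which exactly one $R$-word overlaps with itself while the other overlaps with neither, is precisely type $(1)$ of Proposition \ref{types}, and Lemma \ref{L3} establishes that every subgraph of $S\Gamma(w)$ generated by an $R$-word is finite. In both situations the construction of the generated subgraph halts after at most two elementary $\mathscr{P}$-expansion steps, because no new unsaturated segment labeled by an $R$-word is produced.

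Cases $(3)$ and $(4)$ require the generation-by-generation analysis, and both are reduced to the single phenomenon that every region of a generated subgraph is forced to use an edge of the finite linear automaton $(\alpha_0,\Gamma_0(w),\beta_0)$ of $w$. In case $(4)$, which is type $(3)$ of Proposition \ref{types} with no self-overlap, this is exactly the content of Proposition \ref{P9}. In case $(3)$, which is type $(2)$ of Proposition \ref{types} together with the requirement that at least one $R$-word not be of the form $x^n$, Proposition \ref{P6} guarantees that no special region can occur, so every third generation region uses an edge of $\Gamma_0(w)$; the proposition on type-$(2a)$ subgraphs (stating that once every third generation region uses a linear edge, so does every higher generation region) then propagates this to all generations, the first and second generation regions being anchored to $\Gamma_0(w)$ by construction and by the self-overlap structure. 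Finiteness now follows from a counting argument: the graph is deterministic with no folding by Lemmas \ref{1} and \ref{2}, so each edge of $\Gamma_0(w)$ can lie on the boundary of at most $|u|+|v|$ regions; since $\Gamma_0(w)$ has finitely many edges and every region uses one of them, the total number of regions, and hence the size of the generated subgraph, is finite.

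The main obstacle is case $(3)$, specifically the interaction between the combinatorics of self-overlapping $R$-words and the ``$x^n$'' exclusion. The delicate point is Proposition \ref{P6}: a special third-generation region appears exactly when \emph{both} $R$-words are proper powers, so the hypothesis that at least one $R$-word is not of the form $x^n$ is precisely what keeps every region anchored to the linear automaton and thereby bounds the number of generations. Care is also needed to cover the sub-type $(2b)$ of Proposition \ref{types} (where a proper subword of one $R$-word is a proper subword of the other) under the same hypothesis, since Propositions \ref{P5} and \ref{P6} are phrased for type $(2a)$; the argument there will rest on checking that a common proper subword still cannot allow a new unsaturated segment to escape the neighborhood of $\Gamma_0(w)$. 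Finally, one must confirm rigorously that the ``each region uses a linear edge'' property really does yield the uniform finite bound claimed above once determinism and the absence of folding are invoked.
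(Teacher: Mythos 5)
Your proposal follows essentially the same route as the paper: reduce decidability to finiteness of the subgraphs generated by $R$-words via Theorems \ref{MT2} and \ref{PMT}, then dispatch the four cases with Lemma \ref{L2}, Lemma \ref{L3}, Proposition \ref{P6} together with the propagation of the third-generation property, and Proposition \ref{P9}, respectively. Your explicit counting bound for finiteness and your caveat about sub-type $(2b)$ are refinements the paper does not spell out, but the underlying argument is the same.
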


\begin{proof} If the relation $(u,v)$ is of the form $(1)$, then   by Lemma \ref{L2} $S\Gamma(w)$, for all $w\in X^+$, is finite. Therefore, by Theorem \ref{PMT} $S\Gamma(w)$, for all $w\in (X\cup X^{-1})^*$, is finite, which implies that the word problem is decidable for $M$. 

If the relation $(u,v)$ is of the form $(2)$, then by Lemma \ref{L3} every subgraph of $S\Gamma(w)$, for all $w\in X^+$, generated by an $R$-word is finite. Therefore, by Theorem \ref{MT}, $S\Gamma(w)$, for all $w\in X^+$, is finite. So, by Theorem \ref{PMT}, $S\Gamma(w)$, for all $w\in (X\cup X^{-1})^*$, is finite. Hence, the word problem is decidable for $M$.

If the relation $(u,v)$ is of the form $(3)$, then by Proposition \ref{P6}, none of the subgraphs of $S\Gamma(w)$, for all $w\in X^+$, generated by an $R$-word, contains a special region. Therefore, for any arbitrary $w\in X^+$, every region of a subgraph of $S\Gamma(w)$, generated by an $R$-word, uses an edge of the linear automaton of $w$. Since $w$ is a word of finite length, every subgraph of $S\Gamma(w)$, generated by an $R$-word, is finite. Since $w\in X^+$ is arbitrary, every subgraph of $S\Gamma(w)$, for all $w\in X^+$, is finite. It follows by Theorem \ref{MT} that $S\Gamma(w)$, for all $w\in X^+$, is finite. By Theorem \ref{PMT}, $S\Gamma(w)$, for all $w\in (X\cup X^{-1})^*$, is finite. So, the word problem is decidable for $M$. 

If the relation $(u,v)$ is of the form $(4)$, then by Proposition \ref{P9} every region of a subgraph of $S\Gamma(w)$, for all $w\in X^+$, generated by an $R$-word uses an edge of the linear automaton of $w$. Since $w$ is a word of finite length, every subgraph of $S\Gamma(w)$, generated by an $R$-word, is finite. Since $w\in X^+$ is arbitrary, every subgraph of $S\Gamma(w)$, for all $w\in X^+$, is finite. It follows by Theorem \ref{MT} that $S\Gamma(w)$, for all $w\in X^+$, is finite. By Theorem \ref{PMT}, $S\Gamma(w)$, for all $w\in (X\cup X^{-1})^*$, is finite. So, the word problem is decidable for $M$.

\end{proof}

 
 \bibliographystyle{plain}

\end{document}